        \definecolor{darkblue}{rgb}{0,0,.85} 
        \definecolor{darkred}{rgb}{0.84,0,0}
\renewcommand{\tocsection}[3]{%
  \indentlabel{\@ifnotempty{#2}{\bfseries\ignorespaces#1 #2\quad}}\bfseries#3}
\renewcommand{\tocsubsection}[3]{%
  \indentlabel{\@ifnotempty{#2}{\ignorespaces#1 #2\quad}}#3} 
\newcommand\@dotsep{4.5}
\def\@tocline#1#2#3#4#5#6#7{\relax
  \ifnum #1>\c@tocdepth 
  \else
    \par \addpenalty\@secpenalty\addvspace{#2}%
    \begingroup \hyphenpenalty\@M
    \@ifempty{#4}{%
      \@tempdima\csname r@tocindent\number#1\endcsname\relax
    }{%
      \@tempdima#4\relax
    }%
    \parindent\z@ \leftskip#3\relax \advance\leftskip\@tempdima\relax
    \rightskip\@pnumwidth plus1em \parfillskip-\@pnumwidth
    #5\leavevmode\hskip-\@tempdima{#6}\nobreak
    \leaders\hbox{$\m@th\mkern \@dotsep mu\hbox{.}\mkern \@dotsep mu$}\hfill
    \nobreak
    \hbox to\@pnumwidth{\@tocpagenum{\ifnum#1=1\bfseries\fi#7}}\par
    \nobreak
    \endgroup
  \fi}
\renewcommand\csname r@tocindent0\endcsname{0pt}
\def\l@subsection{\@tocline{2}{0pt}{2.5pc}{5pc}{}}
\def\subsubsection{\@startsection{subsubsection}{3}%
  \z@{.5\linespacing\@plus.7\linespacing}{-.5em}%
  {\normalfont\bfseries}}
    \def\paragraph{\@startsection{paragraph}{4}%
    \z@\z@{-\fontdimen2\font}%
    {\normalfont\bfseries}}
  \theoremstyle{plain}
  \newtheorem{thm}{Theorem}[section]
  \newtheorem{lemma}[thm]{Lemma}
  \newtheorem{cor}[thm]{Corollary}
  \newtheorem{prop}[thm]{Proposition}
  \newtheorem*{prop*}{Proposition}
  \theoremstyle{definition}
  \newtheorem{defin}[thm]{Definition}
  \newtheorem{term}[thm]{Terminology}
  \newtheorem{example}[thm]{Example}
  \newtheorem{question}[thm]{Question}
  \newtheorem{remark}[thm]{Remark}
  \newtheorem{warning}[thm]{Warning}
  \newtheorem{thmi}[thm]{Theorem}
  \newtheorem{propi}[thm]{Proposition}
  \newtheorem{nota}[thm]{Notation}
  \numberwithin{equation}{subsection}
  \newcommand{\isomto}{\xlongrightarrow{\,\smash{\raisebox{-0.65ex}{\ensuremath{\displaystyle\sim}}}\,}}
  \newcommand{\mc}[1]{\EuScript{#1}}
  \newcommand{\mf}[1]{\mathfrak{#1}}
  \newcommand{\ms}[1]{\mathscr{#1}}
  \newcommand{\mr}[1]{\mathrm{#1}}
  \newcommand{\bb}[1]{\mathbb{#1}}
  \newcommand{\cat}[1]{\mathbf{#1}}
  \newcommand{\wh}[1]{\widehat{#1}}
  \newcommand{\ov}[1]{\overline{#1}}
  \newcommand{\wt}[1]{\widetilde{#1}}
  \newcommand{\et}{\mathrm{\acute{e}t}}
  \newcommand{\Et}{\mathrm{\acute{E}t}}
  \newcommand{\rig}{\mathrm{rig}}
  \newcommand{\an}{\mathrm{an}}
  \newcommand{\FEt}{\cat{F\acute{E}t}}
  \DeclareMathOperator{\Spf}{Spf}
  \DeclareMathOperator{\Spa}{Spa}
  \DeclareMathOperator{\Spec}{Spec}
  \DeclareMathOperator{\Hom}{Hom}
  \newcommand{\mct}{\mathcal{t}}
  \newcommand{\mcg}{\mathcal{g}}
  \newcommand{\mcO}{\mathcal{O}}
 \newcommand{\mcF}{\mathcal{F}}
    \newcommand{\mcQ}{\mathcal{Q}}
  \newcommand{\ip}{[\nicefrac{1}{\pi}]} 
  \newcommand{\cO}{\mathcal{O}}
\newcommand{\mysetminusD}{\hbox{\tikz{\draw[line width=0.6pt,line cap=round] (3pt,0) -- (0,6pt);}}}
\newcommand{\mysetminusT}{\mysetminusD}
\newcommand{\mysetminusS}{\hbox{\tikz{\draw[line width=0.45pt,line cap=round] (2pt,0) -- (0,4pt);}}}
\newcommand{\mysetminusSS}{\hbox{\tikz{\draw[line width=0.4pt,line cap=round] (1.5pt,0) -- (0,3pt);}}}
\newcommand{\mysetminus}{\mathbin{\mathchoice{\mysetminusD}{\mysetminusT}{\mysetminusS}{\mysetminusSS}}}
  \newcommand{\stacks}[2][Tag]{\cite[\href{https://stacks.math.columbia.edu/tag/#2}{#1 #2}]{StacksProject}}
  \author{Piotr Achinger}
  \address{Piotr Achinger, Instytut Matematyczny PAN, Śniadeckich 8, 00-656 Warsaw, Poland}
  \email{pachinger@impan.pl}
  \author{Alex Youcis}
  \address{Alex Youcis, Department of Mathematics, National University of Singapore, Level 4, Block S17, 10
Lower Kent Ridge Road, Singapore, 119076}
  \email{alex.youcis@gmail.com}
  \title{Beauville--Laszlo Gluing of Algebraic Spaces}
  \date{\today}
\begin{document}

\begin{abstract} 
    For a complete discrete valuation field $K$, we show that one may always glue a~separated formal algebraic space $\mf{X}$ over $\mcO_K$ to a separated algebraic space $U$ over $K$ along an open immersion of rigid spaces $j\colon \mf{X}^{\rm rig}\to U^\mr{an}$, producing a separated algebraic space $X$ over $\mcO_K$. This process gives rise to an equivalence between such `gluing triples' $(U,\mf{X},j)$ and separated algebraic spaces $X$ over $\mcO_K$, which one might interpret as a version of the Beauville--Laszlo theorem for algebraic spaces rather than coherent sheaves. Moreover, an analogous equivalence exists over any excellent base. Examples due to Matsumoto imply that the result of such a~gluing might be a genuine algebraic space (not a scheme) even if $U$ and the special fiber of $\mf{X}$ are projective. The proof is a combination of Nagata compactification theorem for algebraic spaces and of Artin's contraction theorem. We give multiple examples and applications of this idea. 
\end{abstract}

\maketitle

\tableofcontents

\section{Introduction}
\label{s:intro}

In this article we deal with the following basic problem. Let $K$ be a non-archimedean field with valuation ring $\mcO$, and let $U$ be a separated scheme locally of finite type over $K$. By a \emph{model} of $U$ over $\mcO$ we shall mean a separated scheme of locally finite type $X$ over $\mcO$ with generic fiber $X_K \simeq U$. The question is: 
\begin{equation*}
    \text{\emph{How can we economically describe all models $X$ of $U$?}}
\end{equation*}
Our answer uses formal and rigid geometry, and is hinted at in multiple earlier works (e.g., \cite{ArtinII}, \cite{BBT}, and \cite{BLNeron}). To illustrate the idea, let us consider a basic example from Bruhat--Tits theory.

\begin{example}[Models of ${\rm GL}_n$]\label{eg:BT}
Let $G = \mathrm{GL}(V)$ for a finite dimensional vector space $V$ over $K$. We aim to describe all smooth affine group schemes $\mathcal{G}$ over $\cO$ with generic fiber $G$. There are many interesting examples of such $\mathcal{G}$.
\begin{itemize}
    \item An $\mcO$-lattice $\Lambda\subseteq V$ gives rise to the model $\mathcal{G} = \mathrm{Aut}(\Lambda)$ of $G$ over $\mcO$.
    \item If $K$ is discretely valued with uniformizer~$\pi$, we have the \emph{Iwahori model} $\mathcal{I}$ with
    \[ 
        \mathcal{I}(\cO) =  \{(a_{ij})\in {\rm GL}_n(\cO)\,:\, a_{ij}\in (\pi) \text{ when }i<j\}.
    \]
\end{itemize}
A choice of $\mathcal{G}$ gives rise to rigid-geometric data: the image of $\mathcal{G}(\mcO)$ in $G(K)$ is the set of $K$-points of a rigid-analytic affinoid (i.e., ``compact'') subgroup $\mathsf{G}$ of  $G^{\rm an}$. In the first example above, for $V = K^n$ and  $\Lambda\simeq \mcO^n$, we have
\[ 
    \mathsf{G} = \{(a_{ij})\in {\rm GL}_n\,:\, |a_{ij}|\leqslant 1\},
\]
while for the Iwahori model we obtain the \emph{Iwahori subgroup} of $G(K)$
\begin{equation*}
    \mathsf{I}(K) = \{(a_{ij})\in {\rm GL}_n(K)\,:\, |a_{ij}|\leqslant 1\text{ and }|a_{ij}|\leqslant|
    \pi|\text{ when }i<j\}. 
\end{equation*}
Furthermore, $\mathcal{G}$ allows us to define a reduction map $\mathsf{G}(K)=\mathcal{G}(\cO)\to \mathcal{G}(k)$, where $k$ is the residue field of $K$. One may think of $\mathcal{G}$ as the result of `gluing' $G$ to $\mathcal{G}_k$ along $\mathsf{G}$,  with the `glue' being provided by $\mathsf{G}$ and the reduction map.  In fact, all smooth affine models of $G$ arise from such gluings (cf.\@ \cite[Corollary 2.10.11]{KalethaPrasad}, which implies that $\mathsf{G}$ determines $\mathcal{G}$).
\end{example}

The procedure of Example \ref{eg:BT} applies in general when appropriately formulated. To a scheme $X$ locally of finite type over $\mcO$, one attaches a triple $\mct(X)=(X_K, \widehat{X}, j_X)$ consisting of
\begin{itemize}
    \item its generic fiber $X_K$,
    \item its $\pi$-adic formal completion $\widehat{X}$ (where $\pi$ is a pseudouniformizer of $K$),
    \item the natural morphism  of rigid analytic spaces over $K$ (see Proposition \ref{prop:j-map})
    \[ 
        j_X \colon \widehat{X}^{\rm rig} \to X_K^{\rm an},
    \]
    where $\widehat{X}^{\rm rig}$ is the rigid generic fiber of $\widehat{X}$ and $X_K^{\rm an}$ is the analytification of $X_K$.
\end{itemize}
We treat $\mct(X)$ as an object of the category of triples $(U,\mf{X},j)$ consisting of a~$K$-scheme $U$, a formal scheme $\mf{X}$ over $\mcO$, and a morphism of rigid spaces $j\colon \mf{X}^\mr{rig}\to U^\mr{an}$.
Intuitively, $X$ should be described as the effect of gluing  $\wh{X}$ to $X_K$ along  $\wh{X}^\mr{rig}$. To connect this to Example \ref{eg:BT}, observe that $\mathsf{G}=\wh{\mathcal{G}}^\mr{rig}$ and the reduction map $\mathsf{G}(K)\to \mathcal{G}(k)$ is obtained by passing to $K$-points from the specialization map $\mr{sp}\colon \wh{\mathcal{G}}^\mr{rig}\to \mathcal{G}_k$ of the formal scheme $\wh{\mathcal{G}}$.

While the pushout $X_K\sqcup_{\wh{X}^\mr{rig}}\wh{X}$ does not literally make sense, we can show the following result also previously observed (in lesser generality) in \cite{BLNeron} and \cite{IKY1}.

\begin{propi}[{see Corollary \ref{cor:fully-faithfulness-schemes}}] 
    The functor $\mct$ is fully faithful.
\end{propi}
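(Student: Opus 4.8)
Unwinding the definitions, full faithfulness of $\mct$ asks that for separated $\mcO$-schemes $X,Y$ locally of finite type the map
\[
    \Hom_\mcO(X,Y)\longrightarrow \Hom\bigl(\mct(X),\mct(Y)\bigr)
\]
is a bijection, where a morphism of triples $\mct(X)\to\mct(Y)$ is a pair $(f_K,\wh f)$ consisting of $f_K\colon X_K\to Y_K$ and a morphism of formal schemes $\wh f\colon \wh X\to\wh Y$ subject to the compatibility $f_K^{\an}\circ j_X=j_Y\circ\wh f^{\rig}$ over $\wh X^{\rig}$. The engine I would use is the affine Beauville--Laszlo theorem: for a $\pi$-adic $\mcO$-algebra $A$ one has $A\isomto \wh A\times_{\wh A\ip}A\ip$, and consequently $\Spec \wh A\sqcup \Spec A\ip\to\Spec A$ is faithfully flat, with modules and morphisms gluing along the overlap $\Spec \wh A\ip$. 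The plan is to reduce both faithfulness and fullness to this local picture by choosing an affine cover of $X$ (and of $Y$), the point being that $X_K=\Spec A\ip$ and $\wh X=\Spf\wh A$ are exactly the two charts of such a cover, with overlap the rigid generic fibre.

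\textbf{Faithfulness.} Suppose $f,g\colon X\to Y$ satisfy $\mct(f)=\mct(g)$, and work affine-locally on $X=\Spec A$. Since $Y$ is separated, the locus where $f=g$ is a closed subscheme, so equality of $f$ and $g$ may be tested after the faithfully flat Beauville--Laszlo cover $\Spec \wh A\sqcup\Spec A\ip$. Over $\Spec A\ip=X_K$ the two agree because $f_K=g_K$. Over $\Spec \wh A$ the induced maps have equal completions, namely $\wh f=\wh g$; as $\wh A$ is $\pi$-adically separated, a morphism from $\Spec \wh A$ to a separated scheme is determined by its completion (the equalizer is closed and contains the entire formal scheme $\Spf\wh A$, which is schematically dense). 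Hence $f$ and $g$ agree on both charts, and therefore $f=g$.

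\textbf{Fullness and the main obstacle.} Given a morphism of triples $(f_K,\wh f)$, I first note that affine-locally the formal morphism $\wh f$ is the same datum as an algebraic morphism $\Spec \wh A\to Y$: for $Y=\Spec B$ this is the adjunction $\Hom_{\mathrm{cont}}(B,\wh A)=\Hom_{\mcO}(B,\wh A)$, using that $\wh A$ is $\pi$-adically complete. To produce $f\colon X\to Y$ by Beauville--Laszlo descent it then remains to check that $f_K$ and this map $\Spec\wh A\to Y$ agree after restriction to the overlap $\Spec \wh A\ip$, i.e.\ that the two resulting maps $\Spec\wh A\ip\rightrightarrows Y_K$ coincide. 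This is precisely where the rigid hypothesis enters: these two maps agree if and only if their analytifications do, and those analytifications are exactly $f_K^{\an}\circ j_X$ and $j_Y\circ\wh f^{\rig}$, which coincide by the compatibility built into a morphism of triples. I expect the genuine difficulty to be this translation step — establishing that the analytification functor is \emph{faithful} on the relevant Hom-sets and that the algebraic overlap $\Spec\wh A\ip$ is correctly identified, via $j_X$, with the affinoid rigid space $\wh X^{\rig}$ inside $X_K^{\an}$. Passing between the non-finite-type scheme $\Spec\wh A\ip$ and the analytic object $\wh X^{\rig}$ is the subtle point, and is the analogue, for Hom-sets, of the comparison underlying Proposition~\ref{prop:j-map}. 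Once this is in place, globalizing over the affine cover (verifying the cocycle condition on triple overlaps and descending the locally algebraized morphisms for non-affine separated $Y$) is comparatively routine bookkeeping.
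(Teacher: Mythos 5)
Your affine core is sound and it rests on the same kernel as the paper's argument: the fiber--product isomorphism $A\isomto A\ip\times_{\wh{A}\ip}\wh{A}$ together with faithful flatness of $\Spec\wh{A}\sqcup\Spec A\ip\to\Spec A$ (these are exactly Corollary~\ref{cor:A-reconstruction} and Lemma~\ref{lem:completion-properties}(2), both ultimately coming from Artin/Beauville--Laszlo as in Proposition~\ref{prop:coh-gluing-affine-case}). Your identification of the crux --- translating the rigid-analytic compatibility $f_K^{\an}\circ j_X=j_Y\circ\wh{f}^{\rig}$ over $\Spa(\wh{A}\ip)$ into an equality of the two ring maps $B\ip\rightrightarrows\wh{A}\ip$ --- is also correct, and it does go through by taking global sections along the locally ringed space maps of Proposition~\ref{prop:j-map}. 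Where you diverge from the paper is in packaging: you descend \emph{morphisms} along the faithfully flat cover chart by chart, whereas the paper reconstructs the \emph{object} $(X,\mcO_X)$ from $\mct(X)$ as a ringed space (a glued topology on $|X^\circ|\sqcup|\wh{X}|$ plus the structure sheaf $\mcO_U\times_{\mcO_{\wh{X}^{\rig}}}\mcO_{\wh{X}}$), after which full faithfulness is formal. The paper's route buys slightly more: it needs no separatedness of the target (your faithfulness step uses that the equalizer is closed), and the reconstruction is reused later for essential surjectivity.

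The one place you undersell the difficulty is the globalization. To run your fullness argument for non-affine $Y$ (unavoidable even when $X$ is affine) you must pull back an affine cover $\{V_j\}$ of $Y$ along the pair $(f_K,\wh{f})$ and know that the resulting compatible pair of opens $f_K^{-1}(V_{j,K})\subseteq X_K$ and $\wh{f}^{-1}(\wh{V}_j)\subseteq\wh{X}$ assembles into an \emph{open subscheme of $X$}; similarly your cocycle checks on overlaps presuppose that opens of $X$ are detected by the triple. This is not bookkeeping: it is precisely the content of Proposition~\ref{prop:triple-scheme-same-space-different-day} (that $|\mct(X)|\to|X|$ is a homeomorphism), whose proof needs flatness of $A\to\wh{A}$, torsion-freeness of completions of $I$-torsionfree quotients, and closedness of the specialization map. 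Either import that statement explicitly, or circumvent it (e.g.\ for separated $Y$ via the graph ideal and coherent-sheaf gluing, as the paper does in the algebraic-space case); as written, the reduction to affine $Y$ is the genuine missing step.
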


It is a natural question if, or to what extent, the functor $t$ is an equivalence. Let us only consider separated schemes, and let the target category be that of triples $(U, \mf{X}, j)$ as above but with both $U$ and $\mf{X}$ separated, and where the map $j$ is an open embedding. In \cite[Example 5.2]{Matsumoto} one finds a smooth proper \emph{algebraic space} $X$ over $\mathbb{Z}_p$ such that both $X_{\mathbb{Q}_p}$ and $X_{\mathbb{F}_p}$ are projective schemes (K3 surfaces), and which is not a scheme. One may deduce that the corresponding triple $(X_{\mathbb{Q}_p}, \wh{X}, j_X)$ is not in the essential image of $\mct$. This example shows that the question is more naturally formulated in the realm of algebraic spaces. Somewhat surprisingly, it is always possible to `glue' such triples into algebraic spaces. 

To state our result, we require some setup. Let $S$ be an excellent algebraic space and $S_0\subseteq S$ a closed subspace (e.g., $(S,S_0)=(\Spec(\mcO),V(\pi))$ when $K$ is discretely valued). Let $\widehat{S}$ be the formal completion of $S$ along $S_0$, and let $S^\circ = S\mysetminus S_0$. For an algebraic space $X$ over $S$ we write $X^\circ = X\times_S S^\circ$. Denote by $\cat{AlgSp}^{\rm sep}_S$ the category of algebraic spaces $X$ separated and locally of finite type over $S$. Let $\cat{Trip}^\mr{sep}_{(S, S_0)}$ denote the category of \emph{separated gluing triples} $(U, \mf{X}, j)$ where:
\begin{itemize} 
    \item $U$ is a algebraic space separated and locally of finite type over  $S^\circ$, 
    \item $\mf{X}$ is a formal algebraic space separated and locally of finite type over $\wh{S}$,
    \item $j\colon \mf{X}^\mr{rig}\to U^\mr{an}$ is an open embedding of rigid algebraic spaces over $\wh{S}^{\rm rig}$,\footnote{In fact, by a result of Conrad--Temkin in \cite{ConradTemkin} and Warner in \cite{Warner} (also announced in \cite{FujiwaraKato}), under the separatedness hypotheses both $\mf{X}^\mr{rig}$ and $U^\mr{an}$ are representable by rigid spaces. Therefore, if the reader so wishes, subtleties of adic algebraic spaces can be largely ignored in this paper.}
\end{itemize}
(see \S\ref{ss:formal-schemes-rigid-locus} for a recollection of these concepts in this generality). One might picture a gluing triple as in Figure~\ref{fig:fried-egg} below.

\begin{figure}[ht]
    \centering
    \includegraphics{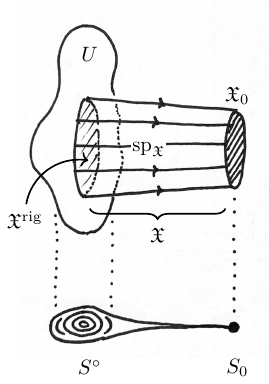}
    \caption{A picture of the gluing triple $(U,\mf{X},j)$.}
    \label{fig:fried-egg}
\end{figure}

The precise version of the claim that one can always uniquely glue a separated gluing triple $(U,\mf{X},j)$ together into some algebraic space $X$ is the following.

\begin{thmi}[{Beauville--Laszlo Gluing of algebraic spaces, see Theorem \ref{thm:main}}]\label{thmi:main}
    The functor
    \[ 
        \mct \colon \cat{AlgSp}^{\rm sep}_S \longrightarrow \cat{Trip}^\mr{sep}_{(S, S_0)},
        \qquad
        \mct(X) = (X^\circ, \widehat{X}, j_X)
    \]
    is an equivalence of categories. 
\end{thmi}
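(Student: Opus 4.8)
\emph{Strategy.} I will deduce the equivalence from full faithfulness together with essential surjectivity. Full faithfulness is the analogue of Corollary~\ref{cor:fully-faithfulness-schemes} for algebraic spaces: a morphism out of $X$ may be tested on an \'etale scheme atlas, and the Beauville--Laszlo-type descent underlying the scheme statement is insensitive to passing to algebraic spaces, so I would obtain it by \'etale descent from the scheme case. The substance is therefore essential surjectivity, and since the construction will be canonical it suffices to treat the quasi-compact case: I would cover a general triple $(U,\mf{X},j)$ by quasi-compact open subtriples, solve the problem on each, and glue the local solutions using full faithfulness. Thus assume $U$ is of finite type over $S^\circ$ and $\mf{X}$ of finite type over $\wh{S}$, and (working \'etale-locally and by Noetherian approximation) that $S$ is a Noetherian excellent scheme.

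\emph{Making things proper.} By Nagata compactification for algebraic spaces I first choose an open dense immersion $U\hookrightarrow \ov{U}$ with $\ov{U}$ proper over $S^\circ$. Applying Nagata once more to the separated finite-type morphism $\ov{U}\to S$ and passing to the closure of $\ov{U}$ in the compactification yields a proper algebraic space $\ov{X}$ over $S$ with $\ov{X}^\circ\cong\ov{U}$. The purpose of properness is that $j_{\ov{X}}\colon \wh{\ov{X}}^\mr{rig}\to \ov{U}^\mr{an}$ is then an isomorphism (the proper comparison refining Proposition~\ref{prop:j-map}), so that the proper formal algebraic space $\wh{\ov{X}}$ over $\wh{S}$ is a formal model of the proper rigid space $\ov{U}^\mr{an}$. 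Through $j$ I regard $\mf{X}^\mr{rig}$ as a quasi-compact admissible open of $U^\mr{an}\subseteq\ov{U}^\mr{an}$ carrying the formal model $\mf{X}$.

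\emph{Gluing formal models and contracting.} Using Raynaud's theory of formal models I would extend $\mf{X}$ to a proper formal model $\mf{V}$ of the whole of $\ov{U}^\mr{an}$ over $\wh{S}$, so that $\mf{X}\subseteq\mf{V}$ is the open formal subspace lying over $\mf{X}^\mr{rig}$. Now $\wh{\ov{X}}$ and $\mf{V}$ are two formal models of $\ov{U}^\mr{an}$, hence are dominated by a common admissible blowup $\mf{W}$; since the blowup of $\wh{\ov{X}}$ is cut out by an open ideal supported on the special fibre, Grothendieck existence over the proper $\ov{X}$ algebraizes it, giving a proper algebraic space $\ov{X}'$ over $S$ with $\wh{\ov{X}'}\cong\mf{W}$ and $(\ov{X}')^\circ\cong\ov{U}$. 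The remaining map $\mf{W}\to\mf{V}$ is an admissible blowup, hence proper and an isomorphism on rigid generic fibres, i.e.\ an isomorphism away from the special fibre---a formal modification in the sense of Artin, whose source is the completion of the genuine algebraic space $\ov{X}'$. Artin's contraction theorem then algebraizes it: it produces a proper algebraic space $Y$ over $S$ together with a proper contraction $\ov{X}'\to Y$, an isomorphism over $S^\circ$, such that $\wh{Y}\cong\mf{V}$ and $Y^\circ\cong\ov{U}$. In particular $\mct(Y)\cong(\ov{U},\mf{V},\mathrm{id})$.

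\emph{Extraction of $X$ and the main obstacle.} Finally, the open immersions $U\hookrightarrow\ov{U}=Y^\circ$ and $\mf{X}\hookrightarrow\mf{V}=\wh{Y}$ are compatible under $j$, so they cut out an open subspace $X\hookrightarrow Y$ with $\mct(X)\cong(U,\mf{X},j)$; concretely $X$ is the open of $Y$ whose generic fibre is $U$ and whose formal completion is $\mf{X}$, its openness near the interface being guaranteed by the compatibility encoded in $j$. This $X$ is the required preimage, completing essential surjectivity. I expect the principal difficulties to lie in the third step: extending $\mf{X}$ to a global proper formal model $\mf{V}$ while keeping it an \emph{open} formal subspace, and verifying that $\mf{W}\to\mf{V}$ satisfies the hypotheses of Artin's contraction theorem (properness and isomorphy off the special fibre) in the generality of formal \emph{algebraic} spaces. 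It is exactly here that algebraic spaces, rather than schemes, become unavoidable---consistent with Matsumoto's examples---and where the openness of $j$ is essential to confine the modification to the special fibre.
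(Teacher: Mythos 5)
Your overall architecture (Nagata compactification, formal models, Artin contraction, then reduce the general case to the quasi-compact one by gluing) is the same as the paper's, but there is a genuine gap at the step you yourself flag as the ``principal difficulty'', and it is not a technicality one can expect to verify: the extension of $\mf{X}$ to a proper formal model $\mf{V}$ of $\ov{U}^{\mathrm{an}}$ \emph{containing $\mf{X}$ as an open formal subspace} is itself a contraction problem that cannot be solved inside the category of formal algebraic spaces. What Raynaud's theory gives you is only this: after an admissible blowup $\mf{Z}'\to\wh{\ov{X}}$, the open $\mf{X}^{\mathrm{rig}}\subseteq \ov{U}^{\mathrm{an}}$ is induced by an open formal subspace $\mf{W}_0\subseteq\mf{Z}'$, and then $\mf{W}_0$ and $\mf{X}$, being two models of the same rigid space, are dominated by a common admissible blowup $\mf{Y}\to\mf{X}$. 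To pass from ``a blowup of $\mf{X}$ sits openly in a model of $\ov{U}^{\mathrm{an}}$'' to ``$\mf{X}$ itself sits openly in a model $\mf{V}$'' you would have to blow \emph{down} along centers that live only on the open piece, i.e.\ form the pushout $\mf{Z}'\sqcup_{\mf{Y}}\mf{X}$ along a non-open morphism --- and no such colimit exists in formal geometry. This is exactly the difficulty the whole theorem is designed to overcome, so assuming $\mf{V}$ at that stage is circular in spirit. The paper's proof is ordered precisely to avoid it: keep the modification $\mf{g}\colon\mf{Y}\to\mf{X}$, algebraize the $U$-admissible modification $\mf{Y}'\to\wh{\ov{X}}$ to $Y'\to\ov{X}$ (via the algebraization of formal modifications, \stacks{0GDU} --- not Grothendieck existence, which would only produce objects over the completed base $\wh{S}$), cut out the open subspace $Y\subseteq Y'$ with $|Y|=|U|\sqcup|\mf{Y}|$, and only \emph{then} apply Artin's contraction theorem to the formal contraction problem $(Y,|\mf{Y}|,\mf{X},\mf{g})$. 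In other words, the single application of Artin contraction at the very end simultaneously does the job you assigned to the (nonexistent) formal extension $\mf{V}$ and to your later contraction $\mf{W}\to\mf{V}$.

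Two smaller points. First, full faithfulness is not a formal consequence of \'etale descent from the scheme case: the nontrivial step is producing a morphism $X\to Y$ from a morphism of triples when $X$ is a scheme but $Y$ is only an algebraic space, and the paper does this by gluing the ideal sheaf of the graph via the coherent-sheaf gluing theorem and then invoking \stacks{03XX} (separated + locally quasi-finite over a scheme implies scheme) to land back in the schematic case; your sketch skips this. Second, your final extraction of $X$ as an open subspace of $Y$ with $|X|=|U|\sqcup|\mf{X}|$ is fine, but it silently uses the identification of $|Y|$ with the topological space of the triple $\mct(Y)$ (Proposition~\ref{prop:triple-scheme-same-space-different-day}), which needs proof.
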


Theorem~\ref{thmi:main} is quite clarifying with respect to several well-known phenomena in arithmetic geometry. For example:
\begin{itemize}[$\diamond$]
    \item questions of good reduction over $K$ vs.\@ $\wh{K}$ (see Example \ref{eg:Poonen} and Proposition \ref{prop:models-over-K-vs-hat-K}), 
    \item the construction and behavior of N\'eron models (see Example \ref{eg:Neron-model-Gm} and Remark~\ref{rem:BLNeron}), \item algebraizability of formal schemes (see Example~\ref{eg:DR} and Proposition \ref{prop:A-E-equiv}),
    \item questions related to the existence of a specialization morphism for the \'etale fundamental group beyond the proper case (see Proposition \ref{prop:FEt-algebraizability} and Remark \ref{rem:Grothendieck-specialization}).
\end{itemize}
In addition, the ideas surrounding it play a central role in \cite{IKY2}, and appear implicitly in Bruhat--Tits theory (see \cite[\S 2.10]{KalethaPrasad}). 

\subsubsection*{Relationship to the classical Beauville--Laszlo theorem}
To explain the terminology `Beauville--Laszlo gluing', let us recall the statement of the Beauville--Laszlo theorem from \cite{BeauvilleLaszlo}. Let $A$ be a ring with a non-zerodivisor $\pi$. Then, the theorem asserts that the functor from the category of $\pi$-torsionfree $A$-modules $M$, to the category of triples $(F,G,\iota)$ where 
\begin{enumerate}[(i)]
    \item $F$ is a $A[\nicefrac{1}{\pi}]$-module,
    \item $G$ is a $\pi$-torsionfree $\wh{A}$-module,
    \item $\iota\colon F\otimes_{A[\nicefrac{1}{\pi}]}\wh{A}[\nicefrac{1}{\pi}]\isomto G[\nicefrac{1}{\pi}]$ is an isomorphism of $\wh{A}[\nicefrac{1}{\pi}]$-modules,
\end{enumerate}
given by sending $M$ to $(M[\nicefrac{1}{\pi}],\wh{M},\iota_M)$ (where $\iota_M$ is the natural isomorphism) is an equivalence. One can extend this result to more general pairs $(A,\pi)$, and to the setting where $M$ is replaced by an affine scheme $\Spec(R)$ over $\Spec(A)$ (e.g., see \stacks[Lemma]{0F9Q}).\footnote{We use the name `Beauville--Laszlo gluing' here because it has become the established name for such results. That said, similar results were previously obtained by Artin in \cite{ArtinII} with Noetherian hypotheses, even allowing the role of $\pi$ to be played by an arbitrary ideal $I$. }

There is simplicity in the Beauville--Laszlo method as it avoids needing to explicitly think about formal geometry, using $\Spec(\wh{R})$ instead of $\Spf(\wh{R})$. That said, this prevents one from being able to globalize such gluing procedures on a general scheme $X=\bigcup \Spec(R_i)$ over $\Spec(A)$, as the correct gluing cannot happen between $\Spec(\wh{R}_i)$ and $\Spec(\wh{R}_j)$, but only between $\Spf(\wh{R}_i)$ and $\Spf(\wh{R}_j)$. Similar problems arise when trying to globalize the base $\Spec(A)$. This forces the introduction of formal geometry and in turn the introduction of rigid geometry, as the analogue of the isomorphism in (iii) above must now take place over the locus $\{\pi\ne 0\}$ in $\Spf(\wh{A})$ which only exists in the world of rigid analytic geometry. 

Despite this connection, we do emphasize that our gluing results differ from the classical version of Beauville--Laszlo gluing outside of the coherent situation, i.e., when $\Spec(R)$ is not a finite scheme over $\Spec(A)$. Indeed, while $\Spec(\wh{R}[\nicefrac{1}{\pi}])$ has a close connection to $\Spa(\wh{R}[\nicefrac{1}{\pi}])$ in the finite case (e.g., the latter is the analytification of the former), the difference becomes drastic even for finite type (but not finite) affine schemes over $\Spec(A)$. For instance, $\Spec(\mcO[x^{\pm 1}][\nicefrac{1}{\pi}])^\mr{an}=\mathbb{G}_{m,K}^\mr{an}$ but the $\{\pi\ne 0\}$ locus in $\Spf(\mcO[x^{\pm 1}]^\wedge)$ is the circle group $\{x\in \mathbb{G}_{m,K}^\mr{an}:|x|=1\}$. 

\subsubsection*{Idea of proof and conditions on $S$} Many of our results, including the fully faithfulness portion of Theorem \ref{thmi:main} requires only that $(S,S_0)$ is of Type (N)/(V) (i.e., $S$ is Noetherian or the spectrum of a complete rank one valuation ring). But, our proof of essential surjectivity uses
\begin{enumerate}[(a)]
    \item the Artin contraction theorem (see Theorem \ref{thm:Artin-contraction}) which (with some work) handles the case when the gluing triple is `proper-like', and
    \item Nagata compactification for algebraic spaces (see \cite{CLO}) to reduce to the `proper-like' case.
\end{enumerate}
Roughly, (a) explains our restriction to $S$ excellent, and (b) explains our separation hypotheses. It would be worthwhile to try and relax either of these hypotheses (e.g., allowing $S = \Spec(\mcO_{\mathbb{C}_p})$).

\subsubsection*{Outline of the article} 

In \S\ref{ss:formal-schemes-rigid-locus}--\ref{ss:gluing-triple-category} we review the necessary background in rigid geometry to define the category of gluing triples in the generality we require. In \S\ref{s:examples} we discuss some illustrative examples of triples and give some applications of Theorem \ref{thmi:main} to clarify them. In particular, in Proposition~\ref{prop:affine-triples}, we explain how to single out those gluing triples whose gluing is a scheme (opposed to an algebraic space), generalizing results from \cite{BLNeron}. In \S\ref{s:coh-gluing} we prove gluing for coherent sheaves: that for an algebraic space $X$, coherent sheaves on $X$ and $\mct(X)$ are the same. In \S\ref{s:proof-of-gluing-theorem} we supply proofs of Theorem~\ref{thmi:main} and two other lengthier propositions.

\subsubsection*{\texorpdfstring{$\heartsuit$ Acknowledgments}{Acknowledgments}}

The authors would like to thank Ofer Gabber,  David Hansen, Aise Johan de Jong, and Martin Olsson, for helpful discussions.

The first author (PA) was supported by the project KAPIBARA funded by the European Research Council (ERC) under the European Union's Horizon 2020 research and innovation programme (grant agreement No 802787). Part of this work was conducted while the second author (AY) was visiting the Hausdorff Research Institute for Mathematics, funded by the Deutsche Forschungsgemeinschaft (DFG, German Research Foundation) under Germany's Excellence Strategy – EXC-2047/1 – 390685813. Part of this work was also carried out while the second author (AY) was supported by JSPS KAKENHI Grant Number 22F22323. 

\subsubsection*{Notation and conventions}

\begin{itemize}
    \item All \emph{(formal) algebraic spaces} in this article are assumed \textbf{quasi-separated}. 
    \item A \emph{non-archimedean field} is a field which is complete with respect to a rank one valuation. We denote by $\cO_K$ or simply by $\cO$ the valuation ring of a non-archimedean field $K$.
    \item For a Huber ring $A$, we shorten the notation $\Spa(A,A^\circ)$ to $\Spa(A)$.
    \item A locally spectral space is called \emph{coherent} if it is quasi-compact and quasi-separated,
    \item For categories of schemes, formal schemes, algebraic spaces etc.\ over a fixed base, we shall use the following abbreviations (as superscripts) to denote properties specifying the corresponding full subcategories:
    \begin{multicols}{2}
    \begin{itemize}[$\diamond$]
        \item lft: locally of finite type, 
        \item adm: admissible (i.e., locally of finite type and $\mc{I}$-torsion free), 
        \item ft: finite type, 
        \item sep: separated
        \item coh: coherent.
    \end{itemize}
    \end{multicols}
    \item We shall almost always use different types of letters/fonts to denote objects over different spaces. The letters $X,Y,Z$ will denote algebraic spaces over some scheme $S$, the letters $U,V,W$ will denote algebraic spaces over an open subscheme $S^\circ\subseteq S$, the letters $\mf{X},\mf{Y},\mf{Z}$ will denote formal algebraic spaces over a completion $\wh{S}$ of $S$, and letters like $\mathsf{X},\mathsf{Y},\mathsf{Z}$ will denote rigid algebraic spaces over the rigid locus $\wh{S}^\mr{rig}$. If $X$ is an algebraic space over $S$ we shall usually denote $X\times_S S^\circ$ by $X^\circ$. 
\end{itemize}

\section{Gluing triples}
\label{s:gluing-triples}

In this section, we formalize the notion of gluing triples over a general base $S$, define the `associated gluing triple' of an algebraic space over $S$, and establish some basic properties of such objects. We then formulate the main result of this article: Beauville--Laszlo Gluing of algebraic spaces (see Theorem \ref{thm:main}).

\subsection{Formal and rigid algebraic spaces}
\label{ss:formal-schemes-rigid-locus}

In this subsection, we recall the definitions of the geometric objects appearing in the definition of a gluing triples.

\subsubsection*{Formal schemes and formal algebraic spaces} 

Our references for formal algebraic spaces are \cite{FujiwaraKato} and \stacks[Chapter]{0AHW}. We refer the reader to these references for any undefined terms. 
But unlike in \cite{FujiwaraKato}, in this article all formal schemes and formal algebraic spaces are quasi-separated and (Zariski or \'etale) locally of the form ${\rm Spf}(A)$ for a ring $A$ which is $I$-adically complete and separated for a finitely generated ideal $I\subseteq A$, endowed with the $I$-adic topology. 

We further always assume our formal algebraic spaces $\mf{S}$ are \emph{locally universally rigid-Noetherian} as in \cite[Chapter I, Definitions 2.1.7 and 6.5.1]{FujiwaraKato}, i.e.\ \'etale locally of the form $\Spf(A)$ as above such that the schemes $\Spec(A\langle x_1, \ldots, x_n\rangle)\mysetminus V(I)$ are Noetherian for all $n\geqslant 0$. We shall often further assume $\mf{S}$ is either 
\begin{itemize}[$\diamond$,leftmargin=.6cm]
    \item of \emph{type (N)}: admits an \'etale cover $\mf{U}\to\mf{S}$ where $\mf{U}$ is a locally Noetherian formal scheme, or
    \item of \emph{type (V)}: admits an \'etale cover $\mf{U}\to\mf{S}$ where $\mf{U}$ is a formal scheme locally of finite type over a rank one valuation ring.
\end{itemize}
Note that every algebraic space of type (N)/(V) is locally universally rigid-Noetherian (see \cite[Chapter I, Example 2.1.4]{FujiwaraKato}).

\begin{nota} 
    Let $\mc{I}$ be an ideal sheaf of definition (see \cite[Chapter I, Definitions 1.1.18 and 6.3.16]{FujiwaraKato}) of a coherent formal algebraic space $\mf{S}$. We write $\mf{S}_n$ for $V(\mc{I}^{n+1})\subseteq \mf{S}$, leaving $\mc{I}$ implicit. 
\end{nota}

We next set our notation for categories of formal schemes over a fixed base $\mf{S}$.

\begin{nota} 
    Denote the category of formal schemes (resp.\ formal algebraic spaces) over $\mf{S}$ by $\cat{FSch}_{\mf{S}}$ (resp.\ $\cat{FAlgSp}_{\mf{S}}$). Let $\cat{FSch}^\ast_\mf{S}$ or $\cat{FAlgSp}^\ast_\mf{S}$ be the subcategories with the same objects but only adic morphisms (see \cite[Chapter I, Definitions 1.3.1 and 6.3.18]{FujiwaraKato}). If $\mf{S}$ is a scheme, let $\cat{Sch}_\mf{S}$ and $\cat{AlgSp}_\mf{S}$ be the category of schemes and algebraic spaces over $\mf{S}$, respectively.
\end{nota}

We next recall our notation for the underlying topological space of a formal algebraic space.  

\begin{defin}
    Let $\mf{S}$ be a formal algebraic space. We define the \emph{underlying space} to have set
    \begin{equation*}
        |\mf{S}|= \left\{x\colon \Spec(k_x)\to \mf{S}: k_x\text{ is a field}\right\}/\sim,
    \end{equation*}
    where $x\sim y$ if they can be dominated by a common $z\colon \Spec(k_z)\to \mf{S}$. As in \stacks[Lemma]{03BX}, we may uniquely, functorially topologize this set so that it agrees with the usual underlying space in the representable case.\footnote{In fact, $|\mf{S}|=|\mf{S}_{\rm red}|$ where $\mf{S}_{\mr{red}}$ is the reduced algebraic subspace $\mf{S}_{\rm red}$ (see \stacks[Section]{0GB5}).}
\end{defin}

Finally, we recall that for a closed algebraic subspace $S_0$ of an algebraic space $S$, one may form the \emph{completion} denoted $\wh{S}$ (leaving the role of $S_0$ implicit) as in \cite[Chapter II, \S6.3.(f)]{FujiwaraKato}). This defines a functor $\wh{(-)}\colon \cat{AlgSp}_S\to \cat{FAlgSp}^\ast_{\wh{S}}$, where we complete $X\to S$ along $X_0=X\times_S S_0$.

\subsubsection*{Adic spaces and adic algebraic spaces} 

In this article we use the theory of adic spaces as our foundation for rigid geometry with our main reference being \cite{HuberEC}. That said, we expand the category of adic spaces under consideration to include those adic spaces $\mathsf{S}$ which are locally strongly rigid-Noetherian, i.e., such that every point has an affinoid open neighborhood $\Spa(A,A^+)\subseteq \mathsf{S}$ where $A$ is strongly rigid-Noetherian in the sense of \cite[Definition 2.8]{ZavyalovSheafiness}. 

To simplify terminology and notation, we refer to such locally strongly rigid-Noetherian adic spaces as just `adic spaces'. Also, for any category of adic spaces, an asterisk as a superscript refers to restricting to the wide subcategory consisting of adic morphisms (in the sense of \cite[\S3]{HuberGen})

\begin{defin}\label{def:analytic-locus} 
    A point $s$ of an adic space $\mathsf{S}$ is \emph{analytic} if the topological field $k(s)$ is non-discrete. The set $\mathsf{S}_\mr{a}\subseteq \mathsf{S}$ of analytic points of $\mathsf{S}$ forms an open subset which we call the \emph{analytic locus}, and we say that $\mathsf{S}$ is \emph{analytic} if $\mathsf{S}=\mathsf{S}_\mr{a}$. The analytic locus forms a functor
    \begin{equation}\label{eq:analytic-locus-functor}
    (-)_\mr{a}\colon \left\{\begin{matrix}\text{Adic}\\ \text{spaces}\end{matrix}\right\}^\ast \to  \left\{\begin{matrix}\text{Analytic adic}\\ \text{spaces}\end{matrix}\right\},
    \end{equation}
    which is right adjoint to the inclusion of analytic adic spaces into the category of adic spaces with adic morphisms. 
\end{defin}

We come to our definition of algebraic spaces in the adic space world.

\begin{defin} 
    An \emph{adic algebraic space} is a sheaf $\mathsf{S}$ on the big \'etale site of adic spaces such that 
    \begin{enumerate}[(i)]
        \item the diagonal map $\Delta_\mathsf{S}\colon \mathsf{S}\to\mathsf{S}\times\mathsf{S}$ is representable by adic spaces and quasi-compact,
        \item there exists an \'etale cover $\mathsf{U}\to \mathsf{S}$ where $\mathsf{U}$ is an adic space.
    \end{enumerate} 
    We say an adic algebraic space $\mathsf{S}$ is \emph{analytic} if it admits an \'etale cover $\mathsf{U}\to \mathsf{X}$ with $\mathsf{U}$ analytic. 
\end{defin}
We often assume that our analytic algebraic space $\mathsf{S}$ is of one of the following types:
\begin{itemize}[$\diamond$,leftmargin=.6cm]
    \item \emph{type (N)}: admits an \'etale cover $\mathsf{U}\to\mathsf{S}$ where $\mathsf{U}$ is an adic space locally of the form $\Spa(A,A^+)$ where $A$ has a Noetherian ring of definition,
    \item \emph{type (V)}: admits an \'etale cover $\mathsf{U}\to\mathsf{S}$ where $\mathsf{U}$ is an adic space locally of finite type over some non-archimedean field.
\end{itemize}

\noindent We will be particularly interested in certain categories of adic algebraic spaces, and so we give specific notation to them. 

\begin{nota} 
   For an analytic algebraic space $\mathsf{S}$, denote by $\cat{RigAlgSp}_{\mathsf{S}}$ the category of adic algebraic spaces locally of finite type over $\mathsf{S}$, whose objects are \emph{rigid algebraic spaces over $\mathsf{S}$}.
\end{nota}

Our definition of the underlying topological space is as in the case of formal algebraic space.

\begin{defin} 
    Let $\mathsf{S}$ be an adic algebraic space. We define the \emph{underlying space} to have set
    \begin{equation*}
        |\mathsf{S}|= \left\{x\colon \Spa(k_x,k_x^+)\to \mathsf{S}:(k_x,k_x^+)\text{ is an affinoid field}\right\}
    \end{equation*}
    where $x\sim y$ if they can be dominated by a common $z\colon \Spa(k_z,k_z^+)\to \mathsf{S}$. As in \stacks[Lemma]{03BX}, we may uniquely, functorially topologize this set so that it agrees with the usual underlying space in the representable case.
\end{defin}

Finally, we extend Definition \ref{def:analytic-locus} to the case of adic algebraic spaces.

\begin{defin}
    If $\mathsf{S}$ is an adic algebraic space, a point of $|\mathsf{S}|$ is \emph{analytic} if it can be represented as $x\colon \Spa(k_x,k_x^+)\to\mathsf{S}$ such that $k(x)$ is not discrete. The subset $|\mathsf{S}|_\mr{a}\subseteq |\mathsf{S}|$ of analytic points is open and so corresponds to a unique open embedding of adic algebraic spaces $\mathsf{S}_\mr{a}\to \mathsf{S}$. We call $\mathsf{S}_\mr{a}$ the \emph{analytic locus} of $\mathsf{S}$. This defines a functor
     \begin{equation*}
    (-)_\mr{a}\colon \left\{\begin{matrix}\text{Adic algebraic}\\ \text{spaces}\end{matrix}\right\}^\ast \to  \left\{\begin{matrix}\text{Analytic adic}\\ \text{algebraic spaces}\end{matrix}\right\},
    \end{equation*}
    which is right adjoint to the inclusion of analytic adic algebraic spaces into the category of adic algebraic spaces with adic morphisms. 
\end{defin} 

\subsubsection*{Rigid locus and a result of Raynaud and Fujiwara--Kato} To begin, we observe there is a functor (recalling our conventions for formal schemes) 
\begin{equation*}
    (-)^\mr{ad}\colon \left\{\begin{matrix}\text{Formal}\\ \text{schemes}\end{matrix}\right\}\to\left\{\begin{matrix}\text{Adic}\\\text{spaces}\end{matrix}\right\},
\end{equation*}
uniquely characterized by preserving open embeddings/coverings and such that there is a functorial identification $\Spf(A)^\mr{ad}=\Spa(A)$ (e.g., combine \cite[Proposition 4.1]{HuberGen} with \cite{ZavyalovSheafiness}). We may compose the functors $(-)^\mr{ad}$ and $(-)_\mr{a}$ to obtain the \emph{rigid locus} functor 
\begin{equation*}
    (-)^\mr{rig} = (-)_\mr{a}\circ(-)^\mr{ad}\colon \left\{\begin{matrix}\text{Formal}\\ \text{schemes}\end{matrix}\right\}^\ast\to
    \left\{\begin{matrix}\text{Analytic}\\\text{adic spaces}\end{matrix}\right\},
\end{equation*}
where again the asterisk means restricting only to adic morphisms. If $\mf{S}=\Spf(A)$ where $(\pi)\subseteq A$ is as an ideal of definition, then $\mf{S}^\mr{rig}=\Spa(A[\nicefrac{1}{\pi}])$ (see \cite[Chapter II, \S A.4.(b)--\S A.4.(d)]{FujiwaraKato}).


The rigid locus functor extends to, and has quite pleasant properties on, the category of formal algebraic spaces locally of finite type over $\mf{S}$. To state this precisely, first recall that a~morphism $\mf{f}\colon \mf{X}'\to\mf{X}$ of formal schemes is an \emph{admissible blowup} if there exists a quasi-coherent, locally finitely generated, open ideal sheaf $\mc{J}\subseteq \mcO_\mf{X}$ such that $\mf{f}$ is final among adic and proper morphisms $\mf{g}\colon \mf{Y}\to\mf{X}$ with $\mf{g}^\ast\mc{J}\subseteq \mcO_\mf{Y}$ a Cartier divisor (see \cite[Chapter II, \S1.1.(a)-1.1.(b)]{FujiwaraKato} for a more explicit description). We say that $\mf{f}$ is an \emph{admissible modification} if it is adic and $\mf{f}^\mr{rig}$ is an isomorphism. If $\mf{X}'$ and $\mf{X}$ are coherent this condition is equivalent to the existence of a~decomposition $\pi_1\circ \mf{f}=\pi_2$ for a diagram of admissible blowups
\begin{equation*}
\mf{X}'\xleftarrow{\pi_1}\mf{Y}\xrightarrow{\pi_2}\mf{X}.
\end{equation*}
(see \cite[Chapter II, Corollary 2.1.5]{FujiwaraKato}). 

\begin{defin}  
    A morphism $\mf{f}\colon \mf{X}'\to\mf{X}$ of formal algebraic spaces is an \emph{admissible modification} if $\mf{X}'\times_\mf{X}\mf{Y}\to\mf{Y}$ is an admissible modification for any morphism $\mf{Y}\to\mf{X}$ from a formal scheme $\mf{Y}$.
\end{defin}

Denote by $W$ the class of admissible modifications of coherent formal algebraic spaces. This is left multiplicative (see \stacks[Section]{04VB}) and so the localization $(-)[W^{-1}]$ with respect to $W$ is well-behaved.

\begin{thm}[Raynaud, Fujiwara--Kato]\label{thm:rigid-locus-functor}  There exists a unique functor
    \[
        (-)^\mr{rig} \colon \cat{FAlgSp} \to\left\{\begin{matrix}\emph{Analytic adic}\\ \emph{algebraic spaces}\end{matrix}\right\}
    \]
    extending the rigid locus functor on formal schemes, and such that the natural map 
    \begin{equation*} 
        \mf{U}^\mr{rig}/\mf{R}^\mr{rig}\to (\mf{U}/\mf{R})^\mr{rig}
    \end{equation*}
    is an isomorphism for every \'etale equivalence relation $\mf{U}\rightrightarrows \mf{R}$. This functor sends $W$ to isomorphisms and for any coherent formal algebraic space $\mf{S}$ induces an equivalence of categories
    \begin{equation*}
        \cat{FAlgSp}^{\rm ft,adm}_{\mf{S}}[W^{-1}] \underset{\rm incl.}\isomto \cat{FAlgSp}^{\rm ft}_{\mf{S}}[W^{-1}] \underset{(-)^\mr{rig}}\isomto \cat{RigAlgSp}_{\mf{S}^\mr{rig}}^{\rm coh}.
    \end{equation*}
\end{thm}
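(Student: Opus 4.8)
The plan is to bootstrap the entire statement from the corresponding assertion for formal \emph{schemes}, which is the classical theorem of Raynaud as developed by Fujiwara--Kato, and to propagate it to algebraic spaces by étale descent. The scheme-level input I would invoke is that on coherent formal schemes $(-)^{\rm rig}$ already induces equivalences $\cat{FSch}^{\rm ft,adm}_{\mf{S}}[W^{-1}]\isomto\cat{FSch}^{\rm ft}_{\mf{S}}[W^{-1}]\isomto\{\text{coherent rigid spaces over }\mf{S}^{\rm rig}\}$, together with the stability properties that, on adic morphisms of formal schemes, $(-)^{\rm rig}$ preserves étale morphisms, open immersions and fibre products, that it sends admissible modifications to isomorphisms, and that conversely every étale morphism of coherent rigid spaces is of the form $\mf{g}^{\rm rig}$ for an étale morphism $\mf{g}$ of formal schemes after an admissible modification of the source. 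All of these are available in Fujiwara--Kato.

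\emph{Construction and uniqueness.} Given $\mf{X}\in\cat{FAlgSp}$, I would choose a formal scheme étale cover $\mf{U}\to\mf{X}$, so that $\mf{R}=\mf{U}\times_\mf{X}\mf{U}$ is again a formal scheme and $\mf{R}\rightrightarrows\mf{U}$ is an étale equivalence relation. Since $(-)^{\rm rig}$ preserves étale morphisms and the fibre products defining the groupoid structure, the pair $\mf{R}^{\rm rig}\rightrightarrows\mf{U}^{\rm rig}$ is again an étale equivalence relation of analytic adic spaces, and I set $\mf{X}^{\rm rig}:=\mf{U}^{\rm rig}/\mf{R}^{\rm rig}$; quasi-compactness of the diagonal is inherited, so this is an analytic adic algebraic space. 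Independence of the chosen cover and functoriality follow in the usual way, since a refinement of covers induces, after applying the fibre-product-preserving functor $(-)^{\rm rig}$, an isomorphism of the resulting quotients. Uniqueness is then automatic: any functor extending the formal-scheme construction and commuting with quotients by étale equivalence relations must agree with this one on a presentation, hence on all of $\cat{FAlgSp}$.

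That $(-)^{\rm rig}$ carries $W$ to isomorphisms reduces to the scheme case étale-locally: for an admissible modification $\mf{f}\colon\mf{X}'\to\mf{X}$, base change along a formal scheme cover $\mf{U}\to\mf{X}$ gives an admissible modification of formal schemes, whose rigid locus is an isomorphism by the scheme case; since being an isomorphism of adic algebraic spaces is an étale-local condition and $(-)^{\rm rig}$ commutes with the presentation, $\mf{f}^{\rm rig}$ is an isomorphism. For the first displayed equivalence I would note that the $\mc{I}$-torsion-free closed subspace $\mf{X}^{\rm adm}\hookrightarrow\mf{X}$ is adic, proper, and a rig-isomorphism, hence lies in $W$; thus every finite-type object becomes isomorphic in the localization to an admissible one, and the inclusion, being fully faithful after localizing, is an equivalence. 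Full faithfulness of the second equivalence I would again settle by descent: a morphism $\mf{X}^{\rm rig}\to\mf{Y}^{\rm rig}$ lifts, over formal scheme covers and after an admissible modification, to a morphism of formal models by scheme-case full faithfulness, and the cocycle conditions propagate because $(-)^{\rm rig}$ reflects equalities of morphisms on the covers.

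The main obstacle is essential surjectivity of the second equivalence, that is, producing a formal model of an arbitrary coherent rigid algebraic space $\mathsf{X}$ over $\mf{S}^{\rm rig}$. I would present $\mathsf{X}=\mathsf{U}/\mathsf{R}$ by a rigid-space étale cover; by the scheme case $\mathsf{U}=\mf{U}^{\rm rig}$ and $\mathsf{R}=\mf{R}^{\rm rig}$ for coherent formal schemes $\mf{U},\mf{R}$, and the two étale projections $\mathsf{R}\rightrightarrows\mathsf{U}$ lift to étale morphisms of formal schemes after a common admissible modification. The delicate point is to realize the \emph{entire} groupoid structure---both projections, the identity, the inverse, and the composition $\mathsf{R}\times_\mathsf{U}\mathsf{R}\to\mathsf{R}$---as $(-)^{\rm rig}$ of formal morphisms over a \emph{single} admissible model $\mf{R}\rightrightarrows\mf{U}$, and to verify the equivalence-relation axioms (in particular that $\mf{R}\to\mf{U}\times\mf{U}$ is a monomorphism) integrally rather than merely on the rigid locus. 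This is arranged by the full faithfulness established above together with the cofinality of admissible modifications, which lets all structure morphisms and all required identities be spread out to a common formal model; setting $\mf{X}:=\mf{U}/\mf{R}$ then yields a finite-type formal algebraic space with $\mf{X}^{\rm rig}\cong\mathsf{X}$. The last point to secure is that this quotient is genuinely a quasi-separated formal algebraic space, which follows once the lifted diagonal is checked to be quasi-compact.
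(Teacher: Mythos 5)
First, a remark on the comparison itself: the paper does not prove Theorem~\ref{thm:rigid-locus-functor} at all. It is recalled with attribution to Raynaud and Fujiwara--Kato and used as a black box, so there is no in-paper argument to measure yours against. Judged on its own terms, your proposal is reasonable for the construction of the functor on $\cat{FAlgSp}$, its uniqueness, the fact that it inverts $W$, and the first displayed equivalence (the $\mc{I}$-torsion-free closed subspace is indeed an adic, proper rig-isomorphism, hence an admissible modification, and the cofinality argument is standard). Full faithfulness of the second equivalence by descent from the scheme case is also plausible in outline.

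The essential surjectivity step, however, rests on a false input, and that step is the real content of the theorem. You assume that every \'etale morphism of coherent rigid spaces is $\mf{g}^\mr{rig}$ for an \'etale morphism $\mf{g}$ of formal schemes after admissible modification. The paper itself flags exactly this failure: Theorem~\ref{thm:map-properties-and-generic-fiber} excludes case (ix) (\'etale) from the direction that produces formal models with property $P$. Concretely, $\Spa(\mathbb{Q}_p(\sqrt{p}))\to\Spa(\mathbb{Q}_p)$ is finite \'etale, but every formal model over $\Spf(\mathbb{Z}_p)$ is, up to admissible modification, the ramified map $\Spf(\mathbb{Z}_p[\sqrt{p}])\to\Spf(\mathbb{Z}_p)$, and complete discrete valuation rings admit no nontrivial admissible blowups; so no \'etale formal model exists. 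Taking the presentation $\mathsf{U}=\Spa(\mathbb{Q}_p)\sqcup\Spa(\mathbb{Q}_p(\sqrt{p}))$ of $\mathsf{X}=\Spa(\mathbb{Q}_p)$, one of the projections $\mathsf{R}\rightrightarrows\mathsf{U}$ contains this very map as a component, so your lifting step fails for a perfectly legitimate presentation. A second, related problem is the monomorphism condition: even when the projections do admit \'etale models (e.g.\ for the Galois groupoid of $\Spf(\mathbb{Z}_p[\sqrt{p}])$ over $\Spf(\mathbb{Z}_p)$, whose projections are isomorphisms componentwise), the map $\mf{R}\to\mf{U}\times\mf{U}$ can fail to be a monomorphism integrally because the two graphs collide on the special fiber; neither full faithfulness in the localized category nor cofinality of admissible modifications repairs this, since those only control data up to rig-isomorphism. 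The genuine proofs (Conrad--Temkin, Warner, Fujiwara--Kato) must replace the presentation itself rather than merely modify a fixed one, and your sketch elides precisely that difficulty.
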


\begin{warning} 
The finite-type hypotheses in the final statement of Theorem \ref{thm:rigid-locus-functor} are crucial. Let $K\subseteq L$ be a finite extension of non-archimedean fields with $\mcO_L$ not finite over $\mcO_K$ (see \cite[\S6.4.1]{BGR}). Write $L=K[x_1,\ldots,x_n]/(f_1,\ldots,f_m)$ with $f_i$ in $\mcO_K[x_1,\ldots,x_n]$, and set $A$ to be $\mcO_K\langle x_1,\ldots,x_n\rangle/(f_1,\ldots,f_m)$. Then, $\Spf(\mcO_L)$ is not finite type over $\Spf(\mcO_K)$ and
\begin{equation*}
    \Spf(\mcO_L)^\mr{rig}\simeq \Spa(L)\simeq \Spf(A)^\mr{rig}.
\end{equation*}
That said, there is no common admissible blowup of $\Spf(\mcO_L)$ and $\Spf(A)$, and so they are not isomorphic in the localization of $\cat{FSch}^\mr{coh}_{\mcO_K}$ with respect to admissible modifications. 
\end{warning}

We will often use the following terminology in the sequel.

\begin{term} 
    Let $\mathsf{X}$ (resp.\@ $\mathsf{f}\colon \mathsf{X}'\to \mathsf{X}$) be an object (resp.\@ morphism) of $\cat{RigAlgSp}_{\mf{S}^\mr{rig}}$.
    \begin{itemize}
        \item A \emph{model} of $\mathsf{X}$ is a formal algebraic space $\mf{X}$ locally of finite type over $\mf{S}$ an identification $\mf{X}^\mr{rig}\simeq \mathsf{X}$, which we always leave implicit.
        \item A \emph{model} of $\mathsf{f}$ is a morphism $\mf{f}\colon \mf{X}'\to\mf{X}$ of formal algebraic spaces locally of finite type over $\mf{S}$ together with an identification $\mathsf{f}\simeq \mf{f}^\mr{rig}$, which we always leave implicit.
        \item We say a model $\mf{X}$ (resp.\@ $\mf{f}$) of $\mathsf{X}$ (resp.\@ $\mathsf{f}$) is \emph{admissible} or \emph{coherent} if $\mf{X}$ (resp.\@ the source and target of $\mf{f}$ are both) admissible or coherent, respectively. 
    \end{itemize}
\end{term}

Finally, we recall a well-known result saying that essentially all reasonable properties $P$ of $\mf{f}$ are inherited by $\mf{f}^\mr{rig}$, and in many cases $\mathsf{f}$ satisfying $P$ is equivalent to having a model $\mf{f}$ satisfying $P$. 

\begin{thm}[Bosch--L\"utkebohmert--Raynaud]\label{thm:map-properties-and-generic-fiber} 
    Let $P$ be one of the following properties 
    \begin{multicols}{3}
    \begin{enumerate}[(i)]
        \item quasi-compact,
        \item open embedding,
        \item closed embedding,
        \item finite,
        \item separated,
        \item proper,
        \item flat,
        \item faithfully flat,
        \item \'etale.
    \end{enumerate}
    \end{multicols}
    \noindent If a morphism locally of finite type between formal schemes of type (N)/(V) satisfies $P$, then the induced map $\mf{f}^\mr{rig}$ satisfies $P$ (for cases (vii) and (viii) we assume the formal scheme is Jacobson). Moreover, except in case (ix), a finite type morphism $\mathsf{f}$ between coherent analytic adic spaces of type (N)/(V) satisfies $P$, then it has a coherent admissible formal model $\mf{f}$ which satisfies $P$.
\end{thm}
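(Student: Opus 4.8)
The statement is a compilation of classical results, and the plan is to prove the two implications separately, leaning on Theorem \ref{thm:rigid-locus-functor} throughout. Each property $P$ can be tested locally on the base, so by quasi-compactness one first reduces the target to an affine formal scheme $\Spf(A)$, whose rigid locus is $\Spa(A\ip)$ by the description recalled before the theorem; for the properties that are in addition local on the source (finiteness, the two embeddings, flatness) one further takes the source affine, say $\Spf(B)$, so that $\mf{f}^\mr{rig}=\Spa(B\ip)\to\Spa(A\ip)$, while for properness and separatedness one keeps the source a general coherent formal scheme over $\Spf(A)$.

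For the forward direction one checks each property on the model directly. Finiteness and the two flavours of embedding are visible already on the rings of definition and pass to $(-)\ip$; quasi-compactness and separatedness are formal consequences of the construction of $(-)^\mr{rig}$; properness is handled by the valuative criterion, which $(-)^\mr{rig}$ respects; and flatness and faithful flatness are where the Jacobson hypothesis enters, as one must deduce flatness over the analytic locus from adic flatness of the model. All of these are classical over a Noetherian base or over a non-archimedean field (Bosch--L\"utkebohmert, Fujiwara--Kato); the only new content is that type (N)/(V) is exactly the hypothesis under which those arguments apply verbatim.

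The substantive direction is the converse: given a finite-type $\mathsf{f}$ of coherent analytic adic spaces of type (N)/(V) satisfying $P$ (with $P$ not étaleness), produce a coherent admissible formal model $\mf{f}$ with the same property. The strategy is Raynaud's: by Theorem \ref{thm:rigid-locus-functor} there is \emph{some} coherent admissible model $\mf{f}_0$ of $\mathsf{f}$, and one then improves $\mf{f}_0$ by a finite sequence of admissible blowups of source and target until $P$ holds on the nose. The engine is flattening by admissible blowup (Raynaud--Gruson, in formal form due to Bosch--L\"utkebohmert), which after suitable blowup makes $\mf{f}_0$ flat and so disposes of cases (vii)--(viii). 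With a flat model secured, the remaining properties are forced on the special fibre by further blowups: finiteness, the embeddings, separatedness, and properness each become properties of $\mf{f}_0$ that can be achieved after modification, and which do not change the rigid locus, since admissible modifications are invisible to $(-)^\mr{rig}$ by Theorem \ref{thm:rigid-locus-functor}.

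The main obstacle I anticipate is the flattening step and its interaction with the subsequent bootstrapping: flattening by blowup is the deepest classical input, and extending it to the type (N)/(V) generality (rather than merely a discrete valuation ring or an affinoid base) is where the Fujiwara--Kato foundations must be invoked with care; moreover, upgrading a flat model to a proper one requires that the blowups achieving properness be chosen so as not to destroy the flatness already in hand. Finally, the exclusion of case (ix) from the converse is to be expected rather than an artefact of the method: étaleness is flatness together with unramifiedness, and although flatness can be arranged by blowup there is in general no formal model whose special fibre is unramified, so an étale morphism of rigid spaces need not admit an étale formal model.
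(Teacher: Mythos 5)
The paper does not actually prove this statement: it is recalled as a classical theorem of Bosch--L\"utkebohmert--Raynaud (in the type (V) case) and Fujiwara--Kato (in general), so there is no in-paper argument to compare yours against. Your sketch does reconstruct the standard architecture of those references correctly: the forward direction by localizing on the target (and, where the property is local on the source, on the source) and checking on rings, with the Jacobson hypothesis entering exactly for (vii)--(viii) because flatness at rigid points must be propagated from flatness of the adic model; the converse by starting from an arbitrary coherent admissible model furnished by Theorem \ref{thm:rigid-locus-functor} and improving it by admissible blowups, with Raynaud--Gruson flattening as the engine and the exclusion of (ix) explained correctly (an \'etale morphism of rigid spaces has flat but in general ramified models).

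The one place where your sketch materially understates the difficulty is the converse for properness (and, to a lesser extent, finiteness and closed embeddings). Producing a \emph{proper} formal model of a proper rigid morphism is not a matter of "further blowups forcing $P$ on the special fibre": admissible blowups of a fixed coherent model never change whether that model is proper, since properness is insensitive to modification of the total space only in the wrong direction --- the issue is that the initial model $\mf{f}_0$ may simply fail to be proper (e.g.\ not universally closed) and no blowup of source and target repairs this. The actual classical proof (L\"utkebohmert in the classical rigid case, Temkin and Fujiwara--Kato in general) is a genuinely separate and deep compactification-type theorem, logically independent of the flattening machinery. Similarly, the converse for closed embeddings and finiteness goes through taking schematic images/integral closures inside a model of the target rather than through flattening. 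So your proposal is right in outline but should isolate properness as requiring its own input rather than folding it into the blowup-improvement loop; with that caveat it is an accurate account of the literature the paper is citing.
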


\subsubsection*{Specialization map} 

We now recall the existence of the specialization map, whose proof follows easily from bootstrapping the arguments in {\cite[Chapter II, Theorems 3.1.2 and Theorem 3.1.5]{FujiwaraKato}}. 

For a formal or adic rigid space $\mathscr{S}$ we let $\mathscr{S}_\Et$ denote the big \'etale topos. It is endowed with the usual structure sheaf given by the following formula
\begin{equation}\label{eq:structure-sheaf-def}
    \mcO_\ms{S}(\ms{U})= \varprojlim_{\ms{U}\to\ms{S}}\mcO_\ms{U}(\ms{U})
\end{equation}
with $\ms{U}\to\ms{S}$ ranges over morphisms to $\ms{S}$ from representable objects. When $\mathsf{S}$ is an adic algebraic space, one may similarly define the sheaf of rings $\mcO_\mathsf{S}^+$ 


\begin{prop}\label{prop:sp-omnibus} Let $\mf{S}$ be a formal algebraic space.
    \begin{enumerate}
    \item There exists a unique natural morphism
    \begin{equation*}
        \mr{sp}_{\mf{S}}\colon |\mf{S}^\mr{rig}|\to |\mf{S}|,
    \end{equation*}
    such that for $\mf{S}=\Spf(A)$ with ideal of definition $(\pi)\subseteq A$, one has 
    \begin{equation}\label{eq:sp-map-affine}
        \mr{sp}_{\mf{S}}\colon \Spa(A[\nicefrac{1}{\pi}])\to \Spf(A),\quad \nu\mapsto \{a\in A:\nu(a)<1\}.
    \end{equation}
    The map $\mr{sp}_{\mf{S}}$ is quasi-compact and closed, and is surjective if $\mf{S}$ is admissible. 
    \item There exists a unique morphism of locally topologically ringed spaces 
    \begin{equation*} 
    \mr{sp}_\mf{S}\colon (\mf{S}^\mr{rig},\mcO_{\mf{S}^\mr{rig}}^+)\to (\mf{S},\mcO_{\mf{S}})
    \end{equation*}
    functorial in a formal scheme $\mf{S}$ such that on global sections \eqref{eq:sp-map-affine} is the map $A\to A[\nicefrac{1}{\pi}]^\circ$, and which recovers (1) on the underlying topological space. 
    \item There exists a unique morphism of locally topologically ringed topoi
    \begin{equation*} \mr{sp}_\mf{S}\colon (\mf{S}^\mr{rig}_\Et,\mcO_{\mf{S}^\mr{rig}}^+)\to (\mf{S}_\Et,\mcO_{\mf{S}}),
    \end{equation*}
    functorial in a formal algebraic space $\mf{S}$  whose underlying morphism of sites associates sends $\mf{Y}\to\mf{X}$ to $\mf{Y}^\mr{rig}\to\mf{X}^\mr{rig}$, and which recovers (1) and (2) in the obvious sense.
    \end{enumerate}
    In all cases we call $\mr{sp}_\mf{S}$ the \emph{specialization morphism} associated to $\mf{S}$.
\end{prop}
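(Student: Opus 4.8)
The plan is to construct the three morphisms in increasing generality: first for an affine formal scheme by the explicit formula, then glue to an arbitrary formal scheme, and finally descend along étale covers to formal algebraic spaces. Throughout, the constructions for formal \emph{schemes} are essentially the content of \cite[Chapter II, Theorems 3.1.2 and 3.1.5]{FujiwaraKato}; our task is the passage to the non-coherent and algebraic-space settings and the topos-level statement~(3).

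For (1) and (2) over a formal scheme I would begin with $\mf{S}=\Spf(A)$ and an ideal of definition $(\pi)$. First I check that \eqref{eq:sp-map-affine} is well-posed: for a continuous valuation $\nu$ on $A\ip$ the set $\{a\in A:\nu(a)<1\}$ is a prime ideal (its complement is multiplicatively closed, and $\nu(c)\le 1$ for $c\in A$ forces the set to be an ideal) which contains $\pi$ since $\nu(\pi)<1$, hence is an open prime, i.e.\ a point of $\Spf(A)$. Continuity, quasi-compactness, closedness, and surjectivity in the admissible case are then precisely the cited results. For (2) over $\Spf(A)$ the relevant map is $A\to A\ip^{\circ}=\mcO^{+}_{\mf{S}^\mr{rig}}(\mf{S}^\mr{rig})$; I would check its compatibility with rational localizations (equivalently, with $(-)^\mr{rig}$ on standard opens of $\Spf(A)$) so that it sheafifies to $\mr{sp}_\mf{S}^{-1}\mcO_\mf{S}\to\mcO^{+}_{\mf{S}^\mr{rig}}$, and locality of the stalk maps is immediate, since the maximal ideal of $\mcO^{+}_{\mf{S}^\mr{rig},\nu}$ is $\{f:\nu(f)<1\}$, whose preimage in $A$ is by definition $\mr{sp}_\mf{S}(\nu)$. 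To pass to a general formal scheme $\mf{S}$, I use that $(-)^\mr{rig}$ preserves open embeddings and coverings and that the preimage of an open $|\mf{U}|\subseteq|\mf{S}|$ under the affine specialization maps is exactly $|\mf{U}^\mr{rig}|$; the affine maps therefore agree on overlaps and glue uniquely, the target-local properties in (1) descend to $\mf{S}$, and functoriality in $\mf{S}$ follows from functoriality of the formula for a continuous adic homomorphism $A\to B$.

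For (3) the key input is that on the big étale sites the assignment $(\mf{Y}\to\mf{X})\mapsto(\mf{Y}^\mr{rig}\to\mf{X}^\mr{rig})$ underlies a morphism of topoi: by Theorem~\ref{thm:map-properties-and-generic-fiber} the functor $(-)^\mr{rig}$ carries étale morphisms to étale morphisms and faithfully flat morphisms to faithfully flat ones, hence sends étale covers to étale covers, and by Theorem~\ref{thm:rigid-locus-functor} it commutes with the relevant fibre products; thus it is a continuous, left-exact functor of sites and induces $\mr{sp}_\mf{S}\colon \mf{S}^\mr{rig}_\Et\to\mf{S}_\Et$ with the stated underlying morphism of sites. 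Applying the already-constructed map (2) to an étale cover $\mf{U}\to\mf{S}$ by a formal scheme and to $\mf{U}\times_\mf{S}\mf{U}$, and invoking functoriality, yields a descent datum for the structure-sheaf morphism; this produces $\mr{sp}_\mf{S}^{-1}\mcO_\mf{S}\to\mcO^{+}_{\mf{S}^\mr{rig}}$ recovering (2) étale-locally, and uniqueness follows because everything is determined after pullback along the cover.

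I expect the main obstacle to be the topos-theoretic descent in (3): one must verify carefully that $(-)^\mr{rig}$ is sufficiently exact and cover-preserving to define a genuine morphism of (locally topologically) ringed topoi --- in particular that the induced functor respects the étale-local nature of both $\mcO_{(-)}$ and $\mcO^{+}_{(-)}$ as defined in \eqref{eq:structure-sheaf-def} --- and that the resulting local-homomorphism property and functoriality are compatible with the descent, rather than the affine computations, which are routine once the formula is in place.
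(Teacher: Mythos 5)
Your proposal is correct and follows essentially the same route the paper intends: the paper gives no written proof, stating only that the result ``follows easily from bootstrapping the arguments in \cite[Chapter II, Theorems 3.1.2 and 3.1.5]{FujiwaraKato}'', and your outline (well-posedness of the affine formula, gluing over a formal scheme, then \'etale descent of the site-level and structure-sheaf data to formal algebraic spaces) is precisely that bootstrap made explicit. The details you supply --- that $\{a:\nu(a)<1\}$ is an open prime, locality of the stalk maps, and cover/fibre-product preservation of $(-)^\mr{rig}$ for the topos-level statement --- are the right ones and are consistent with the cited results.
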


\subsection{Gluing triples and Beauville--Laszlo gluing of algebraic spaces}
\label{ss:gluing-triple-category} 

We now define the category of gluing triples in their natural generality and state our main theorem, Theorem \ref{thm:main}.

\subsubsection*{Base setup}

We begin by fixing the base for which the objects we study will live over. In the sequel we shall use all the following notation without comment.

Let $S$ be a coherent algebraic space and let $S_0$ be a finitely presented closed subspace of $S$, cut out by a quasi-coherent ideal sheaf $\mc{I}\subseteq \cO_S$. We set 
\begin{itemize}
    \item $S^\circ=S\mysetminus S_0$, an open algebraic subspace of $S$,
    \item $S_n=V(\mc{I}^{n+1})$ for $n\geqslant 0$, a finitely presented closed subspace of $S$,
    \item $\wh{S}$ the completion of $S$ along $\mc{I}$, a formal algebraic space.
\end{itemize}
For an algebraic space $X$ over $S$ we shall consistently write $X^\circ$ instead of $X\times_S S^\circ$.

We shall always assume that $(S,S_0)$ is one of the following types:
\begin{itemize}
    \item {\emph{type (N):}} $S$ is locally Noetherian,
    \item {\emph{type (V):}} $S = \Spec(\mcO)$ for a complete rank one valuation ring $\mcO$, and $S_0 = V(\pi)$ for a~pseudouniformizer $\pi$ of $\mcO$.
\end{itemize}



\subsubsection*{Analytification} 

Suppose temporarily that $S$ is a scheme. Consider the following sequence of maps of locally ringed spaces
\begin{equation}\label{eq:rig-to-scheme}
    (\wh{S}^\mr{rig},\mcO_{\wh{S}^\mr{rig}})\xrightarrow{\mr{nat.}}(\wh{S}^\mr{rig},\mcO^+_{\wh{S}^\mr{rig}})\xrightarrow{\mr{sp}_{\wh{S}}}(\wh{S},\mcO_{\wh{S}})\xrightarrow{\mr{nat.}} (S,\mcO_S),
\end{equation}
where each $\mr{nat.}$ stands for the natural map. The composition of these maps then uniquely factorizes through the map $(S^\circ,\mcO_{S^\circ})\to (S,\mcO_S)$ as $\mc{I}\mcO_{\wh{S}^\mr{rig}}=\mcO_{\wh{S}^\mr{rig}}$. When $(S,S_0)=(\Spec(A),(\pi))$ then the map $(\wh{S}^\mr{rig},\mcO_{\wh{S}^\mr{rig}})\to (S^\circ,\mcO_{S^\circ})$ corresponds to the natural map of rings $A[\nicefrac{1}{\pi}]\to \wh{A}[\nicefrac{1}{\pi}]$. 

Using this map of locally ringed spaces, we can define the first instance analytification. 

\begin{defin}\label{def:analytification-scheme} 
    Suppose $S$ is a scheme and $U$ is a scheme locally of finite type over $S^\circ$. We define the \emph{analytification} of $U$ to be the adic space
    \begin{equation*}
        U^\mr{an}= U\times_{S^\circ}\wh{S}^\mr{rig},
    \end{equation*}
    where this fiber product is taken in the sense of \cite[Proposition 3.8]{HuberGen}.
\end{defin}

In order to extend this construction to algebraic spaces, we proceed in two steps. First, suppose that $S$ is a scheme, and let $U = W/R$ be an \'etale presentation of $U$. Then $R^\an \rightrightarrows W^\an$ is an \'etale equivalence relation in rigid spaces over $\wh{S}^\rig$, and the quotient space 
\[ 
    U^\an = W^\an / R^\an
\]
is independent of the chosen presentation (cf.\@ \cite[Lemma 2.2.1]{ConradTemkinII}). For $S$ general, we note that both algebraic spaces over $S^\circ$ and rigid algebraic spaces over $\wh{S}^\rig$ can be defined \'etale locally on $S$. In order to define $U^\an$ in general, we pick an \'etale presentation $S = S'/S''$ of the base algebraic space $S$. Then $(U\times_S S')^\an$ as defined above comes equipped with a descent datum to $S$ (or rather to $\wh{S}^\rig$), producing the desired rigid algebraic space $U^{\rm an}$.

\subsubsection*{Analytification and rigid locus of completion} 

Let $X$ be an algebraic space locally of finite type over $S$. One can construct a rigid algebraic space over $\wh{S}^{\rm rig}$ in two different ways: by taking the analytification $(X^\circ)^{\rm an}$ of $X^\circ = X\times_S S^\circ$, and by taking the rigid locus $\wh{X}^{\rm rig}$ of its formal completion $\wh{X}$ along $X_0 = X\times_S S_0$. These constructions yield a diagram of categories
\[ 
    \begin{tikzcd}
        \cat{AlgSp}^{\rm lft}_S \arrow[r] \arrow[d] & \cat{FAlgSp}^{\rm lft}_{\widehat{S}} \arrow[d] \\
        \cat{AlgSp}^{\rm lft}_{S^\circ} \arrow[r] & \cat{RigAlgSp}^{\rm lft}_{\widehat{S}^\circ,}
    \end{tikzcd}
\]
for which there exists a natural transformation (\emph{not} an {equivalence}) between its compositions.

\begin{prop}[{cf.\@ \cite[Proposition 1.9.6]{HuberEC}}]\label{prop:j-map}
    There is a unique map of rigid algebraic spaces $j_{X}\colon \widehat{X}^{\rm rig} \to (X^\circ)^{\rm an}$ over $\wh{S}^\mr{rig}$, functorial in $X$ and such that the following diagram of locally ringed spaces commutes when $X$ is a scheme
    \begin{equation*}
    \begin{tikzcd}[sep=large]
	{(\wh{X}^\mr{rig},\mcO_{\wh{X}^\mr{rig}})} & {((X^\circ)^\mr{an},\mcO_{(X^\circ)^\mr{an}})} \\
	{(\wh{X},\mcO_{\wh{X}})} & {(X,\mcO_X).}
	\arrow["{j_X}", from=1-1, to=1-2]
	\arrow[from=1-1, to=2-1]
	\arrow["{\eqref{eq:rig-to-scheme}}"{description}, from=1-1, to=2-2]
	\arrow[from=1-2, to=2-2]
	\arrow[from=2-1, to=2-2]
\end{tikzcd}
    \end{equation*}
    Moreover, the map $j_X$ is \'etale. If $X\to S$ is a separated, proper, or representable morphism then $j_X$ open embedding, isomorphism, or locally open embedding, respectively.
\end{prop}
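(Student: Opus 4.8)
The plan is to construct $j_X$ first for schemes using the universal property of Huber's fiber product, then to descend to algebraic spaces, and finally to read off the geometric properties from an affine computation.

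\textbf{Construction for schemes.} For $X$ a scheme I would produce a map of locally ringed spaces $\wh{X}^{\rm rig}\to X^\circ$ by composing the specialization morphism $\mr{sp}_{\wh{X}}\colon(\wh{X}^{\rm rig},\mcO^+_{\wh{X}^{\rm rig}})\to(\wh{X},\mcO_{\wh{X}})$ of Proposition~\ref{prop:sp-omnibus}(2) with the natural map $\wh{X}\to X$. Since $\mc{I}\mcO_{\wh{X}^{\rm rig}}=\mcO_{\wh{X}^{\rm rig}}$, this composite factors through $X^\circ$, and together with the structure map $\wh{X}^{\rm rig}\to\wh{S}^{\rm rig}$ it is compatible over $S^\circ$; this compatibility is precisely the commutativity of the square in the statement, which follows from the functoriality of \eqref{eq:rig-to-scheme}. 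The universal property of $(X^\circ)^{\rm an}=X^\circ\times_{S^\circ}\wh{S}^{\rm rig}$ then yields a unique $j_X$. Uniqueness is immediate from that universal property, and functoriality in $X$ follows formally from uniqueness, since for $X\to Y$ both composites induce the same map to $Y^\circ$ and to $\wh{S}^{\rm rig}$.

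\textbf{Descent to algebraic spaces.} For general $X$ I would reduce to the scheme case by étale descent, first on the base $S$ (both the analytification and the rigid locus are defined étale-locally on $S$), and then on $X$: choosing an étale presentation $X=W/R$ with $W,R$ schemes, I obtain $j_W$ and $j_R$ as above, and by functoriality these are compatible with the two projections $R\rightrightarrows W$. Using Theorem~\ref{thm:rigid-locus-functor} (compatibility of $(-)^{\rm rig}$ with étale equivalence relations) and the étale-local definition of $(-)^{\rm an}$, they descend to a map $j_X\colon\wh{X}^{\rm rig}=\wh{W}^{\rm rig}/\wh{R}^{\rm rig}\to(W^\circ)^{\rm an}/(R^\circ)^{\rm an}=(X^\circ)^{\rm an}$. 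Independence of the presentation and uniqueness again follow from the scheme-level uniqueness together with descent.

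\textbf{Local structure; representable and étale cases.} To extract the geometric properties I would reduce (étale-locally on $S$) to $S=\Spec(A)$ a scheme and treat the affine case $X=\Spec(B)$ with $B$ of finite type over $A$. A presentation gives a closed immersion $X\hookrightarrow\mathbb{A}^n_S$, and the associated functoriality square is Cartesian, since both $\wh{(-)}^{\rm rig}$ and $(-)^{\rm an}$ of a closed subscheme are cut out by the same equations; this exhibits $j_X$ as a base change of $j_{\mathbb{A}^n_S}$ along the closed immersion $(X^\circ)^{\rm an}\hookrightarrow(\mathbb{A}^n_{S^\circ})^{\rm an}$. A direct computation identifies $j_{\mathbb{A}^n_S}$ with an open immersion (the inclusion of the closed unit polydisk $\Spa(\wh{A}\langle x_1,\ldots,x_n\rangle[\nicefrac{1}{\pi}])$ into $(\mathbb{A}^n_{S^\circ})^{\rm an}$ when $\mc{I}=(\pi)$; the type (N) case is analogous, giving an open subspace), so $j_X$ is an open immersion for $X$ affine. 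Covering a representable $X\to S$ (étale-locally on $S$, a scheme) by affines $X=\bigcup X_i$, the opens $\wh{X_i}^{\rm rig}$ cover $\wh{X}^{\rm rig}$ and $j_X$ restricts to the open immersions $j_{X_i}$; hence $j_X$ is a locally open embedding. For an arbitrary algebraic space $X$, étale descent along a cover $W\to X$ by a scheme (for which the functoriality square is Cartesian by compatibility of $\wh{(-)}^{\rm rig}$ and $(-)^{\rm an}$ with étale base change) shows $j_X$ is étale.

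\textbf{Separated and proper cases, and the main obstacle.} For $X\to S$ separated, $\Delta_X$ is a closed immersion, and I would show $j_X$ is a monomorphism by checking that its relative diagonal is an isomorphism: since $\wh{(-)}^{\rm rig}$ and $(-)^{\rm an}$ carry $\Delta_X$ to the (closed) diagonals of $\wh{X}^{\rm rig}$ and $(X^\circ)^{\rm an}$ over $\wh{S}^{\rm rig}$, one identifies $\wh{X}^{\rm rig}\times_{(X^\circ)^{\rm an}}\wh{X}^{\rm rig}$ with $\wh{X}^{\rm rig}$; being étale and monic, $j_X$ is then an open embedding. For $X\to S$ proper, $j_X$ is an open embedding whose source is proper and whose target is separated over $\wh{S}^{\rm rig}$, hence $j_X$ is proper, so it is a clopen immersion; surjectivity, which upgrades it to an isomorphism, I would obtain from the valuative criterion of properness, as every point of $(X^\circ)^{\rm an}$ extends (by properness of $X\to S$) to an integral point lying in $\wh{X}$ and hence lifts to $\wh{X}^{\rm rig}$. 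I expect the main obstacle to be establishing the several base-change (Cartesian-square) compatibilities used above — for closed immersions, étale maps, and the diagonal — cleanly within Huber's formalism for $(-)^{\rm an}$ and the rigid-locus functor of Theorem~\ref{thm:rigid-locus-functor}; these legitimize the reductions, and they require care precisely because $(-)^{\rm an}$ is a fiber product of a scheme against an adic space rather than an honest adic fiber product.
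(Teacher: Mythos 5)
Your proposal is correct and follows essentially the route the paper itself relies on: the paper gives no written proof of this proposition, deferring instead to the cited Proposition 1.9.6 of Huber's book, whose argument is exactly your construction of $j_X$ via the universal property of the fiber product $X^\circ\times_{S^\circ}\wh{S}^{\rm rig}$, the identification of $j_{\Spec(B)}$ with the inclusion of a Weierstrass (unit polydisk) subdomain for the open-embedding statements, and the valuative criterion for surjectivity in the proper case, here extended to algebraic spaces by \'etale descent as the paper's definitions of $(-)^{\an}$ and $(-)^{\rig}$ require. The compatibility (Cartesian-square) lemmas you flag as the main obstacle are indeed the only technical content beyond Huber's affine computation, and they hold for the reasons you indicate.
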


\subsubsection*{Gluing triples} 

We now come to the category of gluing triples. 

\begin{defin} 
    A \emph{gluing triple} over $(S,S_0)$ is a triple $(U, \mf{X}, j)$ where
    \begin{itemize}
        \item $U$ is an algebraic locally of finite type over $S^\circ$,
        \item $\mf{X}$ is a formal algebraic space locally of finite type over $\wh{S}$,
        \item $j\colon \mf{X}^\mr{rig}\to U^\mr{an}$ is an \'etale morphism.
    \end{itemize}
    A morphism of gluing triples $(U_1, \mf{X}_1, j_1)\to (U, \mf{X}, j)$ is a pair of morphisms $\alpha\colon U_1\to U$ and $\beta\colon \mf{X}_1\to \mf{X}$ of algebraic spaces over $S^\circ$ and formal algebraic spaces over $\wh{S}$, respectively, such that $\alpha^\mr{an}\circ j_1=j\circ\beta^\mr{rig}$. We denote the category of gluing triples over $(S,S_0)$ by $\cat{Trip}_{(S,S_0)}$. 
\end{defin}

Informally, one can visualize a triple $(U,\mf{X},j)$ in the form of the following diagram (of ringed spaces/topoi), in which we interpret the leftmost and rightmost square as being `cartesian' 
\begin{equation} \label{eqn:triple-dgm}
    \begin{tikzcd}
        U\arrow[d] & U^{\rm an} \arrow[l] \arrow[d] & \mf{X}^{\rm rig} \arrow[l,swap,"j"] \arrow[r,"\rm sp"] \arrow[d] & \mf{X} \arrow[d] \\
        S^\circ & (S^\circ)^{\rm an} \arrow[l] \arrow[r,equal] & (\wh{S})^{\rm rig} \arrow[r] & \wh{S}.
    \end{tikzcd}
\end{equation}

We will often use the following terminology concerning gluing triples.

\begin{term} \label{term:propertyP}
    Let $(U,\mf{X},j)$ be a gluing triple over $(S,S_0)$.
    \begin{itemize}
        \item  Let $P$ be a property of morphisms of (formal) algebraic spaces. We say that a morphism $(U_1,\mf{X}_1,j_1)\to (U,\mf{X},j)$ is said to \emph{satisfy $P$} if both $U_1\to U$ and $\mf{X}_1\to \mf{X}$ satisfy $P$.
        \item A collection $\{(U_i,\mf{X}_i,j_{i})\to (U,\mf{X},j)\}$ of open embeddings (resp.\@ \'etale morphisms) is an \emph{open cover (resp.\@ \'etale cover)} if $\{U_i\to U\}$ and $\{\mf{X}_i\to\mf{X}\}$ are jointly surjective.
        \item We say that $(U,\mf{X},j)$ is \emph{separated} if $U\to S^\circ$ and $\mf{X}\to\wh{S}$ are separated maps and $j$ is an open embedding. We write $\cat{Trip}_{(S,S_0)}^\mr{sep}$ for the full subcategory of separated gluing triples.
    \end{itemize}
\end{term}

\begin{warning}
    We have defined the notion of separatedness for a triple in Terminology~\ref{term:propertyP} in two different ways: implicitly in the first point with $P$ being `separated', and in the third point. These notions are not equivalent: if $(U, \mf{X}, j)$ is separated, then both $U$ and $\mf{X}$ are separated, but not conversely. We shall always use the more restrictive notion.
\end{warning}

The category $\cat{Trip}_{(S,S_0)}$ has a final object given by the triple $(S^\circ,\wh{S},\mathrm{id})$. It furthermore admits all fiber products, computed in the obvious way:
\begin{equation*}
    (U_1,\mf{X}_1,j_1)\times_{(U_2,\mf{X}_2,j_2)}(U_3,\mf{X}_3,j_3)=(U_1\times_{U_2}U_3,\mf{X}_1\times_{\mf{X}_2}\mf{X}_3,j_1\times j_3).
\end{equation*}
Moreover, if $(T,T_0)\to (S,S_0)$ is a morphism, with $|T_0|=|T\times S_0|\subseteq |T|$,
then there is a natural base change functor $\cat{Trip}_{(S,S_0)}\to \cat{Trip}_{(T,T_0)}$ computed in the same way. Both of these operations preserves the full subcategories of separated triples.

We now summarize the basic descent properties of the category of gluing triples below. Before we do so, let us establish the following terminology. By an \emph{\'etale gluing datum} in $\cat{Trip}_{(S,S_0)}$ we mean an index set $\Sigma$ and collections of triples 
\begin{equation*} 
    \{T_\sigma=(U_\sigma,\mf{X}_\sigma,j_
    \sigma)\}_{\sigma\in\Sigma},\qquad \{T_{\sigma\tau}=(U_{\sigma\tau},\mf{X}_{\sigma\tau},j_{\sigma\tau})\}_{\sigma,\tau\in\Sigma},
\end{equation*}
with $T_{\sigma\tau}=T_{\tau\sigma}$, together with \'etale morphisms $\phi^\sigma_{\sigma\tau}\colon T_{\sigma\tau}\to T_{\sigma}$. We say that an \'etale gluing datum is \emph{separated} if for all $\sigma,\tau\in\Sigma$ the map 
\begin{equation*} 
    (\phi^\sigma_{\sigma\tau},\phi^\tau_{\sigma\tau})\colon T_{\sigma\tau}\to T_\sigma\times T_\tau
\end{equation*}
is a closed embedding. We say that an {open gluing datum} is \emph{effective} if the natural diagram in $\cat{Trip}_{(S,S_0)}$ it forms admits a colimit. The following statement is obvious.

\begin{prop}\label{prop:trip-descent} 
    Every \'etale gluing datum in $\cat{Trip}_{(S,S_0)}$ is effective. Every separated \'etale gluing datum in $\cat{Trip}_{(S,S_0)}^\mr{sep}$ is effective.
\end{prop}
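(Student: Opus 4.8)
The plan is to recognize $\cat{Trip}_{(S,S_0)}$ as (the étale part of) a comma category and to invoke the standard formula for colimits therein, reducing everything to separate gluings in the three constituent categories. Write $F=(-)^\mr{rig}\colon \cat{FAlgSp}^\mr{lft}_{\wh S}\to\cat{RigAlgSp}_{\wh S^\mr{rig}}$ and $G=(-)^\mr{an}\colon \cat{AlgSp}^\mr{lft}_{S^\circ}\to\cat{RigAlgSp}_{\wh S^\mr{rig}}$. A gluing triple is precisely a pair $(\mf X,U)$ equipped with an étale arrow $F\mf X\to GU$, and the matching of morphisms shows that $\cat{Trip}_{(S,S_0)}$ is the full subcategory of the comma category $(F\downarrow G)$ whose structure arrow is étale. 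I would then use the general fact that a diagram $D\colon J\to(F\downarrow G)$ has a colimit as soon as its two projections have colimits in the source categories and $F$ preserves the colimit of the $\mf X$-projection; the colimit is then $(\varinjlim\mf X_j,\varinjlim U_j,\phi)$, with $\phi$ induced from $F(\varinjlim\mf X_j)=\varinjlim F\mf X_j$ and the structural maps. Thus I need only colimits of étale gluing data in $\cat{AlgSp}^\mr{lft}_{S^\circ}$ and $\cat{FAlgSp}^\mr{lft}_{\wh S}$, together with preservation of such colimits by $(-)^\mr{rig}$; notably $(-)^\mr{an}$ is \emph{not} required to preserve colimits, only étale covers.

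First I would glue the constituents. An étale gluing datum projects to étale gluing data $\{U_\sigma\},\{U_{\sigma\tau}\}$ and $\{\mf X_\sigma\},\{\mf X_{\sigma\tau}\}$; since algebraic spaces and formal algebraic spaces satisfy étale descent these glue to $U=\varinjlim U_\sigma$ and $\mf X=\varinjlim\mf X_\sigma$ with $\{U_\sigma\to U\}$ and $\{\mf X_\sigma\to\mf X\}$ étale covers. By Theorem \ref{thm:rigid-locus-functor} the functor $(-)^\mr{rig}$ carries étale equivalence relations to étale equivalence relations and commutes with their quotients, so $\mf X^\mr{rig}=\varinjlim\mf X_\sigma^\mr{rig}$; since $(-)^\mr{an}$ is defined étale-locally, $\{U_\sigma^\mr{an}\to U^\mr{an}\}$ is an étale cover. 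The $j_\sigma$ then assemble into $j\colon\mf X^\mr{rig}\to U^\mr{an}$ by the cocone property, and $j$ is étale: precomposing with the étale cover $\{\mf X_\sigma^\mr{rig}\to\mf X^\mr{rig}\}$ gives the composites $\mf X_\sigma^\mr{rig}\xrightarrow{j_\sigma}U_\sigma^\mr{an}\to U^\mr{an}$, which are étale, and étaleness is étale-local on the source. Hence $(U,\mf X,j)\in\cat{Trip}_{(S,S_0)}$, and being the colimit in $(F\downarrow G)$ while lying in the full subcategory $\cat{Trip}_{(S,S_0)}$, it is the colimit there.

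For the separated statement I would run the same construction and verify the output lands in $\cat{Trip}^\mr{sep}_{(S,S_0)}$; as the latter is full, this automatically exhibits it as the colimit there. The separatedness of the datum says exactly that $U_{\sigma\tau}\to U_\sigma\times_{S^\circ}U_\tau$ and $\mf X_{\sigma\tau}\to\mf X_\sigma\times_{\wh S}\mf X_\tau$ are closed embeddings, i.e. the gluing relations $\coprod U_{\sigma\tau}\rightrightarrows\coprod U_\sigma$ and $\coprod\mf X_{\sigma\tau}\rightrightarrows\coprod\mf X_\sigma$ are closed, which by the usual criterion is equivalent to $U\to S^\circ$ and $\mf X\to\wh S$ being separated. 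It remains to see that $j$ is an open embedding; as $j$ is already étale, it suffices to check it is a monomorphism, i.e. that $\Delta_j$ is an isomorphism. This is the one place where the closed-embedding hypothesis is genuinely used: covering $\mf X^\mr{rig}\times_{U^\mr{an}}\mf X^\mr{rig}$ by the $\mf X_\sigma^\mr{rig}\times_{U^\mr{an}}\mf X_\tau^\mr{rig}$ and comparing with the $\mf X_{\sigma\tau}^\mr{rig}$ (each $j_{\sigma\tau}$ being an open embedding) identifies the relation with the diagonal. The hard part will be precisely this verification that the closed-embedding condition upgrades the étale $j$ to an open embedding; everything else is formal bookkeeping with étale descent and the comma-category colimit formula.
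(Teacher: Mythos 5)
The paper offers no argument for this statement (it is introduced with ``The following statement is obvious''), so there is no proof to compare against. Your comma-category formalization is a reasonable reading of what the authors intend, and the first sentence does go through along your lines: componentwise colimits, Theorem~\ref{thm:rigid-locus-functor} for preservation under $(-)^\mr{rig}$, and source-locality of \'etaleness for the glued $j$ (modulo a looseness inherited from the paper's definition, namely that nothing guarantees the coequalizer of $\coprod T_{\sigma\tau}\rightrightarrows\coprod T_\sigma$ is representable on each component unless the induced pre-relations are honest \'etale equivalence relations).

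The genuine gap is in the step you flag as ``the hard part'', and it is not merely hard: with the definitions as literally stated, the claimed implication is false. The closed-embedding hypothesis makes $j$ separated, so $\Delta_j$ is an open \emph{and} closed immersion; but it does not make $\Delta_j$ an isomorphism. What is missing is surjectivity of $\mf{X}_{\sigma\tau}^\mr{rig}\to\mf{X}_\sigma^\mr{rig}\times_{U^\mr{an}}\mf{X}_\tau^\mr{rig}$, i.e.\ the requirement that the relation defining $\mf{X}$ restrict on rigid fibers to the \emph{full} preimage of the relation defining $U$, and this does not follow from the hypotheses. Concretely, over $(\Spec(\mathbb{Z}_p),V(p))$ take $\Sigma=\{1,2\}$, $T_1=T_2=T_{11}=T_{22}=(\Spec(\mathbb{Q}_p),\Spf(\mathbb{Z}_p),\mathrm{id})$ with identity structure maps, and $T_{12}=T_{21}=(\Spec(\mathbb{Q}_p),\emptyset,\emptyset\hookrightarrow\Spa(\mathbb{Q}_p))$ mapping to $T_1$ and $T_2$ by the identity on the $U$-component. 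Every $\phi^\sigma_{\sigma\tau}$ is \'etale, every $(\phi^\sigma_{\sigma\tau},\phi^\tau_{\sigma\tau})$ is a closed embedding, and both constituent pre-relations are \'etale equivalence relations; yet the componentwise colimit is $(\Spec(\mathbb{Q}_p),\,\Spf(\mathbb{Z}_p)\sqcup\Spf(\mathbb{Z}_p),\,j)$ with $j$ the degree-two fold map $\Spa(\mathbb{Q}_p)\sqcup\Spa(\mathbb{Q}_p)\to\Spa(\mathbb{Q}_p)$: \'etale, separated, but not a monomorphism. So your construction yields the colimit in $\cat{Trip}_{(S,S_0)}$ but it does not land in $\cat{Trip}^\mr{sep}_{(S,S_0)}$, and hence does not exhibit a colimit of the datum \emph{in} the separated subcategory. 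To repair the second sentence you must either read ``effective'' exactly as the paper defines it (existence of the colimit in $\cat{Trip}_{(S,S_0)}$, in which case the separated case is subsumed by the first and your extra verification is unnecessary), or add the hypothesis implicitly present in the paper's actual use of the statement (Step~2 of the essential surjectivity argument), namely that $T_{\sigma\tau}$ is the genuine fiber product $T_\sigma\times_T T_\tau$ inside an ambient triple $T$, which supplies the needed surjectivity by construction.
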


\begin{cor} 
    For an \'etale map $S'\to S$, set $S'_0=S'\times_S S_0$. Then, the associations of $\cat{Trip}_{(S',S'_0)}$ and $\cat{Trip}_{(S',S'_0)}^\mr{sep}$ to $S'\to S$ are stacks for the \'etale topology.
\end{cor}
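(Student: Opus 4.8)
The plan is to verify the two defining axioms of a stack for each of the pseudofunctors $S'\mapsto \cat{Trip}_{(S',S'_0)}$ and $S'\mapsto \cat{Trip}^\mr{sep}_{(S',S'_0)}$: descent for morphisms (the prestack condition) and effectivity of descent data. The underlying fibered category is already in hand, since the base change functors $\cat{Trip}_{(S',S'_0)}\to\cat{Trip}_{(S'',S''_0)}$ described above (computed as fiber product with the final object $(S''^\circ,\wh{S''},\mr{id})$) assemble into a pseudofunctor on the small étale site of $S$, with the required natural isomorphisms coming from associativity of fiber products. It then remains to check the sheaf and effectivity axioms, and I would exploit throughout that a gluing triple is a \emph{componentwise} gadget whose ingredients live in categories already known to satisfy étale descent.

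For the prestack condition, fix $S'\to S$ étale, fix two triples $T_1=(U_1,\mf{X}_1,j_1)$ and $T_2=(U_2,\mf{X}_2,j_2)$ in $\cat{Trip}_{(S',S'_0)}$, and show that $\underline{\Hom}(T_1,T_2)$ is a sheaf on the small étale site of $S'$. A morphism $T_1\to T_2$ is a pair $(\alpha,\beta)$ with $\alpha$ over $S'^\circ$, $\beta$ over $\wh{S'}$, subject to $\alpha^\mr{an}\circ j_1=j_2\circ\beta^\mr{rig}$. Since algebraic spaces over $S^\circ$, formal algebraic spaces over $\wh{S}$, and rigid algebraic spaces over $\wh{S}^\mr{rig}$ each form a stack for the étale topology, the presheaves of morphisms $U_1\to U_2$ and $\mf{X}_1\to\mf{X}_2$ are sheaves, and the compatibility identity involving $j_1,j_2$ is an equality of morphisms of rigid algebraic spaces, hence may be tested étale-locally on $S'$. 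A compatible local family therefore glues to a unique global morphism of triples; the same argument applied to isomorphisms, and restricted to separated triples, gives the prestack property in both cases.

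For effectivity, let $\{S'_i\to S'\}$ be an étale cover and $(T_i,\theta_{ij})$ a descent datum, with $T_i\in\cat{Trip}_{(S'_i,(S'_i)_0)}$ and $\theta_{ij}$ transition isomorphisms over the overlaps $S'_{ij}=S'_i\times_{S'}S'_j$ satisfying the cocycle condition. Because $U_i$ is locally of finite type over $S'^\circ_i$ and $S'^\circ_i\to S'^\circ$ is étale, $U_i$ is locally of finite type over $S'^\circ$, and likewise $\mf{X}_i$ over $\wh{S'}$; thus each $T_i$ is naturally an object of $\cat{Trip}_{(S',S'_0)}$. Setting $T_{ij}$ to be the common restriction of $T_i$ and $T_j$ to $S'_{ij}$ (identified through $\theta_{ij}$), the structure maps $T_{ij}\to T_i$ are étale, so the collection $\{T_i\}$, $\{T_{ij}\}$ is an étale gluing datum in the sense preceding Proposition~\ref{prop:trip-descent}. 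That proposition yields a colimit $T$ in $\cat{Trip}_{(S',S'_0)}$; since this colimit is computed componentwise (the quotient being performed in $\cat{AlgSp}_{S^\circ}$, $\cat{FAlgSp}_{\wh{S}}$, and on the $j$-map), descent in each of those categories gives $T\times_{S'}S'_i\isomto T_i$ compatibly with the $\theta_{ij}$, so the datum is effective. The separated case is identical once one notes that a descent datum of separated triples induces a \emph{separated} gluing datum: componentwise, $T_{ij}\to T_i\times T_j$ is the graph of the transition isomorphism, and the graph of a morphism into a separated (formal) algebraic space is a closed embedding, so the hypothesis of the separated half of Proposition~\ref{prop:trip-descent} holds.

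The only step that is more than bookkeeping — and the one I expect to require care — is the last point of the effectivity argument: matching the \emph{abstract} effectivity supplied by Proposition~\ref{prop:trip-descent} (existence of a colimit of an étale gluing datum) with the \emph{concrete} base-change descent demanded by the stack axiom, i.e.\ verifying that the abstractly produced $T$ genuinely restricts to $T_i$ over each $S'_i$. This hinges on the colimit of triples being computed componentwise together with the stack property of each of the three ambient categories of spaces; everything else reduces to the descent statements already available for algebraic, formal algebraic, and rigid algebraic spaces.
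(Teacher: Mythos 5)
Your proposal is correct and follows the same route the paper intends: the paper states the corollary without proof as an immediate consequence of Proposition~\ref{prop:trip-descent}, and your argument is precisely the expected elaboration --- componentwise \'etale descent of morphisms in the three ambient categories for the prestack condition, and Proposition~\ref{prop:trip-descent} (with the graph/relative-diagonal observation to verify separatedness of the induced gluing datum) for effectivity.
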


\subsubsection*{Beauville--Laszlo Gluing of algebraic spaces} 

To state our main theorem we first observe that Proposition \ref{prop:j-map} allows us to realize algebraic spaces over $S$ as gluing triples.

\begin{defin}\label{def:t-functor} Let $X$ be an algebraic space locally of finite type over $S$. The \emph{(gluing) triple associated} to $X$ is defined as follows:
\begin{equation*}
    \mct(X)= (X^\circ,\wh{X},j_X).
\end{equation*}
This association gives rise to functors
    \begin{equation*}
        \mct \colon \cat{AlgSp}^{\rm lft}_S \to \cat{Trip}_{(S, S_0)},\qquad  \mct \colon \cat{AlgSp}^{\rm lft,sep}_S \to \cat{Trip}^\mr{sep}_{(S, S_0)}.
    \end{equation*}
\end{defin}

Again, one can visualize the triple $\mct(X) = (X^\circ, \widehat{X}, j_X)$ in the form of a diagram, in which the diagram \eqref{eqn:triple-dgm} features as the three back faces of the prism. 
\[
    \begin{tikzcd}[column sep=small, row sep=small]
        & (X^\circ)^{\rm an}\ar[dddd]\arrow[dl] & & \wh{X}{}^{\rm rig}\arrow[dddd]\arrow[ll,"j_X",swap] \arrow[dr] \\
      X^\circ\arrow[dddd]\ar[drr,crossing over] &   & &   & \wh{X}\arrow[dddd]\arrow[dll,crossing over] \\
        &   & X & \\
        \\
        & (S^\circ)^{\rm an}\arrow[dl] \arrow[rr,equal] & & (\widehat{S})^{\rm rig}\arrow[dr] \\
      S^\circ\arrow[drr] &   & &   & \widehat{S}\arrow[dll] \\
        &   &S \arrow[from=uuuu,crossing over]& \\
    \end{tikzcd}
\]

We come now to our main theorem, whose proof will occupy the entirety of \S\ref{s:proof-of-gluing-theorem}. We preface this by saying that an algebraic space $X$ locally of finite type over $S$ is the \emph{gluing} of a gluing triple $(U,\mf{X},j)$ if $\mct(X)\simeq(U,\mf{X},j)$. Informally, this means that the square below is a pushout
\begin{equation*}
    \begin{tikzcd}[sep=large]
	& {\mf{X}^\mr{rig}} & {\mf{X}} \\
	{U^\mr{an}} & U & X
	\arrow["{\mr{sp}_\mf{X}}", from=1-2, to=1-3]
	\arrow["j"', from=1-2, to=2-1]
	\arrow[from=1-2, to=2-2]
	\arrow[from=1-3, to=2-3]
	\arrow["{\mr{nat.}}"', from=2-1, to=2-2]
	\arrow[from=2-2, to=2-3]
	\arrow["\ulcorner"{anchor=center, pos=0.125, rotate=180}, draw=none, from=2-3, to=1-2]
\end{tikzcd}
\end{equation*}
Beauville--Laszlo Gluing of algebraic spaces say such a gluing always exists if $S$ is a $G$-space.

\begin{defin}\label{def:G-space} 
    An algebraic space $S$ is called a \emph{$G$-space} if for each finite type point $s$ of $S$ (see \stacks[Section]{06EE}) there exists an \'etale morphism $\varphi\colon S'\to S$ with $S'$ a scheme, and a closed point $s'$ of $S'$ such such that $\varphi(s')=s$ and $\mcO_{S',s'}$ is a $G$-ring in the sense of \stacks[Definition]{07GH}.
\end{defin}

\begin{example} 
If $S$ is an algebraic space which admits an \'etale cover $\bigsqcup_i \Spec(A_i)\to S$ where each $A_i$ is an excellent ring (in the sense of \stacks[Section]{07QS}), then $S$ is a $G$-space.
\end{example}

\begin{thm}[Beauville--Laszlo Gluing of algebraic spaces]\label{thm:main} Let $(S,S_0)$ be a pair of type (N)/(V). Then, the functor
        \begin{equation*}
            \mct \colon \cat{AlgSp}^{\mr{lft,sep}}_S \to \cat{Trip}^\mr{sep}_{(S, S_0)}
        \end{equation*}
from Definition \ref{def:t-functor} is fully faithful. Furthermore, if $S$ is a $G$-space then $\mct$ is an equivalence of categories with quasi-inverse
    \begin{equation*}
        \mcg\colon \cat{Trip}^\mr{sep}_{(S,S_0)}\to \cat{AlgSp}_S^{\mr{lft},\mr{sep}},\quad (U,\mf{X},j)\mapsto \left(Y\mapsto \Hom_{\cat{Trip}_{(S,S_0)}}\left(\mct(Y),(U,\mf{X},j)\right)\right).
    \end{equation*}
We sometimes call the functor $\mcg$ the \emph{gluing functor}.
\end{thm}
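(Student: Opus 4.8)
The plan is to establish full faithfulness and essential surjectivity separately, and then to read off the formula for $\mcg$ formally: once $\mct$ is known to be an equivalence, its quasi-inverse satisfies $\Hom_S(X,\mcg(U,\mf{X},j))\cong\Hom_{\cat{Trip}_{(S,S_0)}}(\mct(X),(U,\mf{X},j))$, so by the Yoneda lemma $\mcg(U,\mf{X},j)$ necessarily represents the displayed functor of points and no separate check of the formula is needed beyond knowing the representing object exists (which is exactly essential surjectivity). For full faithfulness — the part requiring only type (N)/(V) — I would reduce to schemes using that both $\cat{AlgSp}^{\mr{lft,sep}}_S$ and $\cat{Trip}^{\mr{sep}}_{(S,S_0)}$ are \'etale stacks over $S$ (the corollary to Proposition \ref{prop:trip-descent}), so the statement descends to the scheme case of Corollary \ref{cor:fully-faithfulness-schemes}. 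The mechanism there is reconstruction of morphisms by graphs: given $X,Y$ and a triple morphism $(\alpha,\beta)\colon\mct(X)\to\mct(Y)$, one forms $Z=X\times_S Y$ (separated over $X$ since $Y/S$ is separated), takes the closed graphs $\Gamma_\alpha\subseteq Z^\circ$ and $\Gamma_\beta\subseteq\wh{Z}$, which agree over $\wh{Z}^{\mr{rig}}$ by the compatibility built into a triple morphism, and glues them by the Beauville--Laszlo gluing for coherent sheaves of \S\ref{s:coh-gluing} to a closed $\Gamma\subseteq Z$; since $\Gamma\to X$ is an isomorphism over $S^\circ$ and after completion along $S_0$, the same coherent gluing forces it to be an isomorphism, and $f=\mr{pr}_Y\circ(\Gamma\to X)^{-1}$ is the unique morphism inducing $(\alpha,\beta)$.

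The substance of the theorem is essential surjectivity, which is where $S$ being a $G$-space (hence excellence) enters. Fix a separated triple $(U,\mf{X},j)$. Using the stack property again I would descend to $S=\Spec(A)$ with $A$ a Noetherian $G$-ring (or $S$ of type (V)); constructing $X$ over each member of an \'etale cover of $S$ carries descent data through full faithfulness, so the pieces reglue. The central reduction is to embed the triple as an \emph{open sub-triple} of a proper one, i.e.\ to produce $(U,\mf{X},j)\hookrightarrow(\ov U,\ov{\mf{X}},\ov{\jmath})$ with $\ov U$ proper over $S^\circ$, $\ov{\mf{X}}$ proper over $\wh{S}$, $\mf{X}\hookrightarrow\ov{\mf{X}}$ an open formal subspace, and $\ov{\jmath}$ an isomorphism. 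On the generic side this is Nagata compactification \cite{CLO} of $U$; on the formal side one must extend the given model $\mf{X}$ of the tube $j(\mf{X}^{\mr{rig}})\subseteq\ov U^{\mr{an}}$ to a proper formal model $\ov{\mf{X}}$ of all of $\ov U^{\mr{an}}$, resting on Raynaud's theory of formal models (Theorem \ref{thm:rigid-locus-functor}) and using separatedness of $j$ crucially. Granting the proper case below, one gets a proper $\ov X/S$ with $\mct(\ov X)\simeq(\ov U,\ov{\mf{X}},\ov{\jmath})$, and the open sub-triple then corresponds, via Proposition \ref{prop:trip-descent} and full faithfulness, to an open subspace $X\subseteq\ov X$ with $\mct(X)\simeq(U,\mf{X},j)$.

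For the proper case itself — $\ov{\jmath}\colon\ov{\mf{X}}^{\mr{rig}}\isomto\ov U^{\mr{an}}$ — I would proceed by contraction. First choose, again by \cite{CLO}, any proper model $X'/S$ of $\ov U$; by Proposition \ref{prop:j-map} its completion $\wh{X'}$ is then a proper formal model of $\wh{X'}{}^{\mr{rig}}=\ov U^{\mr{an}}\simeq\ov{\mf{X}}^{\mr{rig}}$. By Theorem \ref{thm:rigid-locus-functor} there is a common admissible modification $\mf{Z}\to\wh{X'}$, $\mf{Z}\to\ov{\mf{X}}$ of these two models. The admissible modification $\mf{Z}\to\wh{X'}$ is algebraizable directly: its center is an \emph{open} ideal, hence descends to a coherent ideal on a thickening $X'\times_S S_n$, whose preimage in $\mcO_{X'}$ has the correct completion, so that $\mf{Z}=\wh{X''}$ for a proper modification $X''\to X'$ (no completeness or Grothendieck existence being required). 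Finally I would feed the formal modification $\wh{X''}=\mf{Z}\to\ov{\mf{X}}$ into the Artin contraction theorem (Theorem \ref{thm:Artin-contraction}) to contract it to a proper algebraic space $\ov X/S$ equipped with $X''\to\ov X$ and $\wh{\ov X}\simeq\ov{\mf{X}}$. As every modification involved is an isomorphism over $S^\circ$, one gets $\ov X{}^\circ\simeq\ov U$, and chasing the identifications shows $j_{\ov X}$ matches $\ov{\jmath}$, so $\mct(\ov X)\simeq(\ov U,\ov{\mf{X}},\ov{\jmath})$ as required.

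I expect two steps to be the main obstacles. The first is the compactification of the triple: Nagata compactification is far from canonical, and extending the \emph{given} model $\mf{X}$ of the tube to a proper formal model $\ov{\mf{X}}$ of $\ov U^{\mr{an}}$ with $\mf{X}$ kept as an open formal subspace, while preserving separatedness, is exactly where the separatedness hypotheses are used and demands careful deployment of Raynaud's theory. The second, conceptually deeper, is the application of the Artin contraction theorem: verifying its hypotheses for the formal modification $\wh{X''}\to\ov{\mf{X}}$ in the category of formal algebraic spaces is what forces $S$ to be excellent (a $G$-space), and identifying the output of the contraction with the prescribed $\ov{\mf{X}}$ and $\ov U$ requires care. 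The compactification is the principal bookkeeping difficulty, the contraction the principal conceptual one.
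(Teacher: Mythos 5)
Your overall architecture — full faithfulness via the graph/coherent-gluing argument reduced to the scheme case, essential surjectivity via Nagata compactification, Raynaud's theory of formal models, algebraization of admissible modifications, and Artin's contraction theorem — is the paper's. The full faithfulness half is essentially correct as you describe it (though note that "both sides are étale stacks" only reduces to $S$ affine; handling a general algebraic space $Y$ as target is exactly what your graph argument does, and a general source $X$ still needs a colimit-over-scheme-presentations argument on top of that). The genuine problem is in your organization of essential surjectivity, specifically the step you call "the principal bookkeeping difficulty": extending the \emph{given} formal model $\mf{X}$ of the tube $j(\mf{X}^\mr{rig})\subseteq \ov{U}^\mr{an}$ to a proper formal model $\ov{\mf{X}}$ of all of $\ov{U}^\mr{an}$ \emph{containing $\mf{X}$ as an open formal subspace on the nose}. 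Raynaud's theory (Theorem \ref{thm:rigid-locus-functor}, Theorem \ref{thm:map-properties-and-generic-fiber}) only produces a formal model of the open immersion of rigid spaces after replacing $\mf{X}$ by an admissible modification $\mf{Y}\to\mf{X}$; to recover $\mf{X}$ itself inside a proper formal model you would have to form the pushout $\mf{Y}'\sqcup_{\mf{Y}}\mf{X}$ along the proper map $\mf{Y}\to\mf{X}$, which is itself a contraction problem — and one posed in the category of formal algebraic spaces, where Theorem \ref{thm:Artin-contraction} (whose output is an algebraic space, not a formal one) does not apply. So this step is not bookkeeping; as stated it has no justification, and any repair reintroduces a contraction.

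The fix is to reorder the operations as the paper does: compactify $U\to S^\circ\to S$ directly to a proper algebraic space $\ov{X}/S$ (so that $j_{\ov{X}}$ is an isomorphism by Proposition \ref{prop:j-map}); use Raynaud to produce admissible modifications $\mf{g}\colon\mf{Y}\to\mf{X}$ and $\mf{Y}'\to\wh{\ov{X}}$ with $\mf{Y}\hookrightarrow\mf{Y}'$ open; algebraize $\mf{Y}'\to\wh{\ov{X}}$ to a $U$-modification $Y'\to\ov{X}$ (your blowup-descends-to-a-thickening argument is the right idea here, modulo first dominating the modification by a blowup); pass to the open subspace $Y\subseteq Y'$ with $|Y|=|U|\sqcup|\mf{Y}|$ (openness requires the homeomorphism $|\mct(Y')|\simeq|Y'|$ of Proposition \ref{prop:triple-scheme-same-space-different-day}); and only then perform the single Artin contraction of $\wh{Y}_{|\mf{Y}|}=\mf{Y}\to\mf{X}$. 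This keeps the unmodified $\mf{X}$ out of the picture until the last step, where Theorem \ref{thm:Artin-contraction} is designed to produce it, and avoids ever having to realize $\mf{X}$ as an open of a proper formal model. Your treatment of the proper case itself (common admissible modification of the two proper formal models, algebraize one leg, contract the other) is correct and is exactly the paper's contraction step; likewise your Yoneda derivation of the formula for $\mcg$ is fine once the equivalence is established. A final word of caution: the general (non-quasi-compact) case does not follow from the finite type case by pure formality — one must choose finite unions $W_{i(j)}$ of generic-fiber charts containing each $\mf{X}_j^\mr{rig}$ and reglue via full faithfulness and Proposition \ref{prop:triples-morphisms-properties}, as in the paper's Step 2.
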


\subsubsection*{Properties of triples and morphisms of triples}  Finally we give an omnibus result that shows that the functor $\mct$ reflects and preserves most frequently-encountered properties of morphisms. We defer its proof to \S\ref{ss:proof-of-morphism-properties}, as it relies on later material.


\begin{prop}\label{prop:triples-morphisms-properties} 
    Let $f\colon Y\to X$ be a morphism of algebraic spaces separated and locally of finite type over $S$, and let $P$ be one of the following properties:
    \begin{multicols}{3}
    \begin{enumerate}[i)]
        \item quasi-compact,
        \item surjective,
        \item open embedding,
        \item closed embedding,
        \item (locally) quasi-finite,
        \item isomorphism,
        \item finite,
        \item separated,
        \item flat,
        \item \'etale,
        \item smooth.
    \end{enumerate}
    \end{multicols}
    \noindent Then, the map $f$ has property $P$ if and only if $\mct(f)$ does.
\end{prop}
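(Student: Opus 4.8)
By Terminology~\ref{term:propertyP}, the morphism $\mct(f)=(f^\circ,\widehat f)$ satisfies $P$ exactly when both its generic part $f^\circ\colon Y^\circ\to X^\circ$ and its formal part $\widehat f\colon \widehat Y\to\widehat X$ do; so the claim is that $f$ has $P$ if and only if its base change to $S^\circ$ and its completion along $S_0$ both have $P$. Every property on the list is stable under base change, étale-local on source and target, and local for the fpqc topology on the base, and the formation of $\mct$ commutes with étale localization of $S$, $X$ and $Y$ (Proposition~\ref{prop:trip-descent} and its corollary). The plan is to reduce, by passing to étale charts, to the case where $(S,S_0)=(\Spec A,V(I))$ is affine of type~(N) with $A$ Noetherian and $X=\Spec B$, $Y=\Spec C$ are affine, deferring the modifications for type~(V) to the end. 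The one structural fact I will lean on throughout is that the $I$-adic completion of $C\otimes_B\widehat B$ is again $\widehat C$, so that $\widehat f$ is precisely the formal completion of the scheme morphism $f_{\widehat B}\colon \Spec(C\otimes_B\widehat B)\to\Spec\widehat B$ along $V(I\widehat B)$.

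The forward implication is immediate: if $f$ has $P$, then $f^\circ=f\times_S S^\circ$ has $P$ as a base change along the open immersion $S^\circ\hookrightarrow S$, while each reduction $f_n=f\times_S S_n$ has $P$ as a base change along a closed immersion; since $P$ for an adic morphism of formal algebraic spaces is detected on the system $(f_n)_{n\geqslant 0}$, the completion $\widehat f$ has $P$ as well.

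For the converse I would invoke the Beauville--Laszlo flat cover directly, in the spirit of the discussion in the introduction. As $A$ is Noetherian the map $B\to\widehat B$ is flat, and $\widehat B/I\widehat B\cong B/IB$ forces the image of $\Spec\widehat B\to\Spec B$ to contain $V(I)=X_0$; hence
\[
    h\colon X^\circ\sqcup \Spec\widehat B\longrightarrow X
\]
is a faithfully flat quasi-compact, i.e.\ fpqc, morphism. By fpqc descent of $P$ along $h$ it suffices to prove that the two pullbacks $f\times_X X^\circ=f^\circ$ and $f\times_X\Spec\widehat B=f_{\widehat B}$ have $P$. The former holds by hypothesis. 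For $f_{\widehat B}$ I split $\Spec\widehat B$ along $V(I\widehat B)$: any prime of $\widehat B$ not containing $I\widehat B$ contracts into $\Spec B\mysetminus V(I)$, so over the open complement of $V(I\widehat B)$ the morphism $f_{\widehat B}$ is a base change of $f^\circ$ and inherits $P$, whereas along $V(I\widehat B)$ it is governed by its completion $\widehat f$, which has $P$ by hypothesis.

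The heart of the matter, and the step I expect to be the main obstacle, is this last transfer of $P$ from the formal morphism $\widehat f$ to the scheme morphism $f_{\widehat B}$ at the points of $V(I\widehat B)$. For $P$ among flat, smooth, étale and (locally) quasi-finite the locus in $\Spec\widehat B$ where $f_{\widehat B}$ has $P$ is open and, by the standard comparison between a finite-type morphism of Noetherian adic rings and its formal spectrum, contains $V(I\widehat B)$; openness then finishes these cases. The genuinely global properties, namely open and closed embedding, finite, isomorphism, surjective, and separated, are not open conditions, and for them I would argue either by a second application of fpqc descent or directly from the module-theoretic description of $\widehat f$ (for instance finiteness and closed immersions correspond to $\widehat C$ being finite over, respectively a quotient of, $\widehat B$, which is visible equally on $\Spf$ and on $\Spec$), again combined with the fact that away from $V(I\widehat B)$ everything is pulled back from $f^\circ$. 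Assembling the two loci shows $f_{\widehat B}$ has $P$, and descent along $h$ yields $P$ for $f$. Finally, in type~(V) the ring $A=\mcO$ is a complete rank one valuation ring rather than Noetherian, so the Noetherian inputs (flatness of $B\to\widehat B$ and openness of the $P$-locus) must be replaced by their rig-Noetherian analogues; I expect the cleanest route is to observe that the properties in question are insensitive to the choice of admissible model and to bootstrap from the type~(N) case, though a direct check of flatness of the completion would also do.
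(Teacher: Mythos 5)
Your forward direction and the reduction to affine \'etale charts agree with the paper, but your converse takes a genuinely different route. The paper never descends morphism properties along the cover $X^\circ\sqcup\Spec\wh{B}\to X$; instead it concentrates all the difficulty into two cases --- flatness, proved directly for the local rings $B_{\mf{m}}\to C_{\mf{n}}$ at points of $V(I)$ via the local flatness criterion for adic filtrations (\cite[Chapter 0, Corollary 8.3.9]{FujiwaraKato}, with the (APf)/pseudo-adhesive hypotheses carrying the type (V) case), and finiteness, proved by gluing the coherent structure algebras $\mc{A}^\circ$ on $X^\circ$ and $\wh{\mc{A}}$ on $\wh{X}$ along $\wh{X}^{\mr{rig}}$ using Proposition \ref{prop:coh-gluing} and full faithfulness --- and then derives the remaining nine properties by formal reductions (\'etale/smooth from flat; open embedding from \'etale plus surjectivity of the diagonal, checked on underlying spaces via Proposition \ref{prop:triple-scheme-same-space-different-day}; isomorphism $=$ surjective open embedding; closed embedding $=$ finite monomorphism; separated from the diagonal). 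Your descent strategy is workable for the properties that are pointwise or open on the source (quasi-compact, surjective, quasi-finite, flat, \'etale, smooth), but for flatness the ``standard comparison'' you invoke at points of $V(I\wh{B})$ is exactly the hard step: it is essentially the same local criterion the paper uses, and in type (V), where $C\otimes_B\wh{B}$ is not Noetherian and flatness of its completion is not free, you are forced back into the Fujiwara--Kato adhesive machinery that you defer; the proposed ``bootstrap from type (N)'' has no content as stated.

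The genuine gap is in the global properties, above all finite and closed embedding. You claim finiteness is ``visible equally on $\Spf$ and on $\Spec$'' and can be combined with the behaviour away from $V(I\wh{B})$, but the two components of $\mct(f)$ do not determine finiteness of $f$: take $S=X=\Spec(\mathbb{Z}_p)$, $Y=\Spec(\mathbb{Q}_p)=\Spec(\mathbb{Z}_p[x]/(px-1))$. Then $f^\circ$ is an isomorphism and $\wh{f}$ is the finite morphism $\Spf(0)=\emptyset\to\Spf(\mathbb{Z}_p)$, yet $C\otimes_B\wh{B}=\mathbb{Q}_p$ is not finite over $\mathbb{Z}_p$, so your descent step fails; the same example defeats the closed-embedding case. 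The missing input is the compatibility through $j$ --- that $j_Y$ identifies $\wh{Y}^{\mr{rig}}$ with the full preimage $(Y^\circ)^{\mr{an}}\times_{(X^\circ)^{\mr{an}}}\wh{X}^{\mr{rig}}$ --- which is precisely what the paper's coherent-sheaf gluing consumes when it matches $\mc{A}^\circ$ and $\wh{\mc{A}}$ over $\wh{X}^{\mr{rig}}$ (the step the paper labels ``by setup,'' and which in this degenerate example fails, so even there some care is needed); your argument never uses $j$ at any point and therefore cannot close these cases. Your fallback of ``a second application of fpqc descent'' on $\Spec\wh{B}$ also has no candidate cover, since $\wh{B}$ is already complete. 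Repairing the proof requires importing the coherent gluing theorem for finite/closed embeddings and the diagonal tricks for open embedding, isomorphism and separatedness, at which point you have essentially reconstructed the paper's argument.
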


\section{Examples and further results}\label{s:examples} 

In this section we illustrate Theorem \ref{thm:main} by giving several concrete examples of gluing. When applicable, we also remark on natural corollaries of Beauville--Laszlo Gluing of algebraic spaces that the example more generally suggests.

\subsection{Gluing produces finite type outputs} We begin by showing how simple gluing triples can give rise to quite exotic-looking schemes.

\begin{example}[Two formal disks glued to the affine line]\label{eg:two-disks-one-A1} Consider the gluing triple 
\begin{equation*}(\mathbb{A}^1_{\mathbb{Q}_p},\widehat{\mathbb{A}}^1_{\mathbb{Z}_p}\sqcup \widehat{\mathbb{A}}^1_{\mathbb{Z}_p},j),
\end{equation*}
where $j$ is the natural embedding  
\begin{equation*}
    (\widehat{\mathbb{A}}^1_{\mathbb{Z}_p}\sqcup \widehat{\mathbb{A}}^1_{\mathbb{Z}_p})^\mr{rig}\isomto \left\{x\in\mathbb{A}^{1,\mr{an}}_{\mathbb{Q}_p}:|x|\leqslant 1\right\}\sqcup \left\{x\in\mathbb{A}^{1,\mr{an}}_{\mathbb{Q}_p}:|x-\nicefrac{1}{p}|\leqslant 1\right\}\subseteq \mathbb{A}^{1,\mr{an}}_{\mathbb{Q}_p}.
\end{equation*}
In this case one may show that $\mcg(\mathbb{A}^1_{\mathbb{Q}_p},\widehat{\mathbb{A}}^1_{\mathbb{Z}_p}\sqcup \widehat{\mathbb{A}}^1_{\mathbb{Z}_p},j)\simeq \Spec(A)$ where 
\begin{equation}\label{eq:two-disks-one-A1}
    A=\left\{f(x)\in\mathbb{Z}_p[x]: f(x+\nicefrac{1}{p})\in\mathbb{Z}_p[x]\right\}.
\end{equation}
In particular, the special fiber of the resulting affine scheme $X = \Spec(A)$ is isomorphic to the disjoint union of two copies of $\mathbb{A}^1_{\mathbb{F}_p}$.

The reason this example is worth remarking on, is that it is not even clear whether the ring $A$ from \eqref{eq:two-disks-one-A1} is a finitely generated $\mathbb{Z}_p$-algebra. That said, one may check by hand that it is generated by the elements 
\[ 
    \alpha = x(1-px)^2 \qquad \beta = px^2(1-px), \qquad \gamma = px 
\]
subject to the relations
\[ 
    p\alpha = \gamma(1-\gamma)^2, \qquad
    p\beta = \gamma^2(1-\gamma), \qquad
    \gamma\alpha = (1-\gamma)\beta.
\]

\end{example}

In fact, one has that such a gluing is finite type quite generally as the following result shows. We delay its proof until \S\ref{ss:proof-of-fg} due to its length.

\begin{prop}\label{prop:affine-gluing-base-case} Let $R$ be a Noetherian ring and $(\pi)\subseteq R$ an ideal. We further let
\begin{itemize} 
\item $A$ be a finitely generated $R[\nicefrac{1}{\pi}]$-algebra, 
\item $B$ be a topologically finitely generated $\wh{R}$-algebra, 
\item and $j^*\colon A\to C=B[\nicefrac{1}{\pi}]$ be a map of $R[\nicefrac{1}{\pi}]$-algebras with dense image and for which the induced map $\Spa(C) \to \Spec(A)^{\rm an}$ is an open immersion. 
\end{itemize}
Then the $R$-algebra $D$ defined as the pull-back   
\begin{equation*}
    \begin{tikzcd}
	D & B \\
	A & C,
	\arrow[from=1-1, to=1-2]
	\arrow[from=1-1, to=2-1]
	\arrow["\lrcorner"{anchor=center, pos=0.125}, draw=none, from=1-1, to=2-2]
	\arrow[from=1-2, to=2-2]
	\arrow["j"', from=2-1, to=2-2]
\end{tikzcd}
\end{equation*}
is finitely generated, and satisfies $D[\nicefrac{1}{\pi}]=A$ and $\widehat{D} = B$.
\end{prop}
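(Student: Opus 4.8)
The plan is to dispose of the two asserted identities first, since they are formal, and then to concentrate all the real work on finite generation, which is where the open‑immersion hypothesis must enter. Writing $D=A\times_C B\subseteq A\times B$ with its two projections $p_A,p_B$, the relation $C=B\ip$ shows that $\ker p_A=0\times B[\pi^\infty]$ is $\pi$‑power torsion and that $p_A$ becomes surjective after inverting $\pi$; hence $D\ip=A$. For the completion I would check directly that $D/\pi^n D\xrightarrow{\ \sim\ }B/\pi^n B$ for every $n$: surjectivity comes from the density of $j^\ast$ (given $b\in B$ one finds $a\in A$ with $j^\ast(a)-b\in\pi^n B$, so $(a,b+\pi^n\beta)\in D$ reduces to $b$), and injectivity is the observation that if $(a,\tilde b)\in D$ with $\tilde b\in\pi^nB$ then $(\pi^{-n}a,\pi^{-n}\tilde b)$ again lies in $D$, so $(a,\tilde b)\in\pi^nD$. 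Passing to the limit gives $\wh D=B$. I would also reduce to $B$ being $\pi$‑torsion free by peeling off the ideal $0\times B[\pi^\infty]$, which (as $B$ is Noetherian) is killed by a power of $\pi$ and is finite over $D$, hence harmless for finite generation.

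The heart is finite generation, and the naive approach fails: both $D/\pi D$ (finite type over $R/\pi$, since $B$ is topologically finite type) and $D\ip=A$ being finitely generated does \emph{not} suffice — the ring $\mathbb{Z}_p+x\mathbb{Q}_p[x]$ satisfies both but is not finitely generated over $\mathbb{Z}_p$, and for it the map $\Spa(C)\to\Spec(A)^{\an}$ is precisely \emph{not} open. So the argument must use openness to bound denominators. I would extract a finite‑type algebraic model as follows. Since $\Spa(C)$ is quasi‑compact and $\Spec(A)^{\an}$ is exhausted by the tubes $\wh{\mathcal V}^{\rig}$ of finite‑type $R$‑models $\mathcal V=\Spec(A_0)$ of $\Spec A$ (rescaling coordinates by powers of $\pi$), the open immersion $\Spa(C)\hookrightarrow\Spec(A)^{\an}$ factors through one such tube $\mathsf T=\wh{\mathcal V}^{\rig}$. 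Applying Raynaud's theorem in the form of Theorems \ref{thm:rigid-locus-functor} and \ref{thm:map-properties-and-generic-fiber} to the quasi‑compact open immersion $\Spa(C)\hookrightarrow\mathsf T$, I would realize it by an admissible formal blow‑up $\mf{V}'\to\wh{\mathcal V}$ together with an open formal subscheme modelling $\Spa(C)$.

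Because $\wh{\mathcal V}$ is the $\pi$‑adic completion of the finite‑type, hence Noetherian, affine $R$‑scheme $\mathcal V$, this blow‑up is the completion of an algebraic blow‑up $\mathcal V'\to\mathcal V$ (projective, so finite type over $R$), and the open formal subscheme is the completion of an open $W\subseteq\mathcal V'$. Enlarging $W$ by the whole generic fibre $\mathcal V'^{\circ}=\Spec A$ — which is disjoint from the special fibre and so does not alter the completion — I obtain a finite‑type $R$‑scheme $W$ with $W^\circ=\Spec A$ and $\wh W\cong\Spf B$. The identities of the first step then identify the ring of global sections: $\mcO(W)[\nicefrac1\pi]=A$, $\wh{\mcO(W)}=B$, and $\mcO(W)=A\times_C B=D$.

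It remains to show that $W$ is affine, equivalently that $D=\mcO(W)$ is finitely generated, and this is the step I expect to be the main obstacle. The special fibre $W_0=\Spec(B/\pi B)$ is affine and closed in $W$, its formal neighbourhood $\wh W=\Spf B$ is affine, and $W$ is covered by the two finite‑type opens $W^\circ=\Spec A$ and an affine neighbourhood of $W_0$; affineness would then follow from Serre's criterion once one verifies $H^1(W,\mathcal F)=0$ for quasi‑coherent $\mathcal F$. For this I would invoke the coherent‑gluing results of \S\ref{s:coh-gluing} to compute the cohomology of $W$ in terms of its gluing triple, where it vanishes because $U$ and $\mf X$ are affine. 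The delicate point throughout is that the bounded‑denominator finiteness making $W$ affine is exactly the content of the openness of $j$, which is why that hypothesis cannot be dropped; granting affineness, $D$ is finitely generated and the first‑step identities finish the proof.
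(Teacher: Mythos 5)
Your handling of the two identities $D\ip=A$ and $\wh D=B$ is correct and coincides with the paper's own Step~4, and your reduction to $\pi$-torsion-free $B$ is in the spirit of its Step~3 (though note that the ideal $I=0\times B[\pi^\infty]\subseteq D$ is killed by a power of $\pi$ but is \emph{not} nilpotent, so ``$I$ is finite over $D$, hence harmless'' only gives $D=E+I^m$ for all $m$ and does not close up by itself; one must additionally use that $D\to B/\pi^NB$ surjects onto a finitely generated $R$-algebra, or argue via the fibre-product decomposition as the paper does). For the heart of the matter, however, you take a genuinely different, geometric route from the paper --- which proceeds by a direct commutative-algebra argument: a careful choice of generators $x_1,\dots,x_n\in D$, followed by a density-versus-openness comparison of the kernels of $R[x]\to A$ and $R\langle x\rangle\to B$ inside $R\ip\langle x\rangle\to C$ --- and your route has two genuine gaps.

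The serious one is the assertion $\wh W\cong \Spf(B)$. Raynaud's theorem (Theorems~\ref{thm:rigid-locus-functor} and~\ref{thm:map-properties-and-generic-fiber}) produces \emph{some} coherent admissible formal model $\mf W\subseteq \mf V'$ of the open immersion $\Spa(C)\hookrightarrow \mathsf T$, but gives no control identifying $\mf W$ with $\Spf(B)$, nor even with $\Spf(B/B[\pi^\infty])$: two admissible models of the same affinoid are related only through a common admissible blowup and generally have different rings of global sections. So your $W$ satisfies $\wh W\cong\mf W$ and, by Corollary~\ref{cor:A-reconstruction}, $\mcO(W)=A\times_C\mcO(\mf W)$, which need not equal $D$. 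Tellingly, the hypothesis that $j^*$ has dense image --- which is exactly what pins down $B$ among the possible models and which the paper uses crucially in its Steps~1 and~2 --- never enters your finite-generation argument. The second gap is the affineness of $W$: the special fibre $W_0$ has no reason to admit an affine Zariski neighbourhood (that is essentially the statement being proved), and the fallback via Serre's criterion needs $H^1(W,\mcF)$ to be computable from the gluing triple. Section~\ref{s:coh-gluing} only establishes an equivalence of abelian categories of coherent sheaves, i.e., it controls $H^0$ and $\Hom$'s; a cohomological or derived form of the gluing is a substantive additional input not available in the paper. Until both points are repaired, the proposal does not establish finite generation of $D$.
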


\subsection{Characterization of (affine) schematic gluing triples} 

It is natural to ask how to recognize the `(affine) schematic separated gluing triples', i.e., the essential image of separated (affine) schemes under the functor $\mct$. One might guess the answer is that separated gluing triple $(U,\mf{X},j)$ is (affine) schematic if $U$ and $\mf{X}$ are (affine) formal schemes. The following example shows this is not right: one can glue two affine objects together to obtain a non-affine object.

\begin{example}\label{eg:non-affine-glued-affine} 
Let $\mathbb{A}^1_{\mathbb{Z}_p}=\Spec(\mathbb{Z}_p[x])$. Then, $X=\mathbb{A}^1_{\mathbb{Z}_p}\mysetminus V(p,x)$ satisfies $\mct(X)=(\mathbb{A}^1_{\mathbb{Z}_p},\wh{\bb{G}}_{m,\mathbb{Z}_p},j)$ where $j$ is the natural inclusion of the unit circle $\wh{\mathbb{G}}_{m,\mathbb{Z}_p}^\mr{rig}\simeq \{x\in\mathbb{A}^{1,\mr{an}}_{\mathbb{Q}_p}:|x|=1\}$ into $\mathbb{A}^{1,\mr{an}}_{\mathbb{Q}_p}$.
\end{example}

More seriously, it is not even true that the result of gluing a formal scheme to a scheme along a rigid analytic open is \emph{even a scheme} as the following example shows.

\begin{example}[Matsumoto]\label{eg:Matsumoto} 
    Matsumoto \cite[Example~5.2]{Matsumoto} observed that a K3 surface over a~discretely valued field might have good reduction as an algebraic space but not as a scheme. More precisely, for every prime $p\geqslant 7$ he constructed a smooth and proper algebraic space $X$ over $\mathbb{Z}_p$ whose fibers $X_{\mathbb{Q}_p}$ and $X_{\mathbb{F}_p}$ are (projective) K3 surfaces, but such that $X_{\mathbb{Q}_p}$ does not admit a smooth proper scheme model over $\mathbb{Z}_p$. Moreover, this property persists even after passing to $\mcO_K$ for a finite extension $K$ of $\mathbb{Q}_p$. The generic fiber $X_{\mathbb{Q}_p}$ is a double cover of $\mathbb{P}^2_{\mathbb{Q}_p}$ ramified along a sextic and has Picard number one. What stops $X$ from being projective itself is that no extension of an ample line bundle on the generic fiber is ample on the special fiber, and no ample line bundle on the special fiber lifts to a line bundle on $X$. 

    The triple $\mct(X) = (X_{\mathbb{Q}_p},\wh{X},j_X)$ is separated ($j_X$ is even an isomorphism), the algebraic space $X_{\mathbb{Q}_p}$ is a projective scheme, and the formal algebraic space $\wh{X}$ is a proper formal scheme. In fact, by a simple deformation theory argument one may see that $\wh{X}_n$ is projective for all $n\geqslant 0$, though $\wh{X}$ itself admits no line bundle whose restriction to $\wh{X}_0 = X_{\mathbb{F}_p}$ is ample. Matsumoto's observations combined with Theorem~\ref{thm:main} imply that there does not exist a scheme $X'$ over $\mathbb{Z}_p$ with $\mct(X') = \mct(X)$.    
\end{example}

The missing hypothesis in these examples is one involving the topology of rings: for a ring $A$ and element $\pi$, the natural map $j_{\Spec(A)}^\ast\colon A[\nicefrac{1}{\pi}]\to \wh{A}[\nicefrac{1}{\pi}]$ has dense image. This explains the phenomenon witnessed in Example \ref{eg:non-affine-glued-affine} as $j^\ast\colon \mathbb{Q}_p[x]\to \mathbb{Q}_p\langle x^{\pm1 }\rangle$ does not have dense image.

\begin{defin} \label{def:affine-triples}
    Suppose $(S,S_0)=(\Spec(R),V(\pi))$ where $R$ is a Noetherian ring. A gluing triple $(U,\mf{X},j)$ over $(S, S_0)$ is \emph{affine} if the following conditions are satisfied:
    \begin{enumerate}[(a)]
        \item $U = \Spec(A)$ is affine,
        \item $\mf{X} = \Spf(\widehat{B})$ is affine, 
        \item $j\colon \Spa(\wh{B}[\nicefrac{1}{\pi}])\to \Spec(A)^{\rm an}$ is an open immersion,
        \item and $j^*\colon A\to B$ has ($\pi$-adically) dense image.
    \end{enumerate}
\end{defin}

\begin{prop}\label{prop:affine-triples} 
    Suppose $(S,S_0)=(\Spec(R),V(\pi))$ and $R$ is Noetherian. The functor $\mct$ induces an equivalence between the category of finite type affine schemes over $\Spec(R)$ and that of affine gluing triples over $(S,S_0)$. 
\end{prop}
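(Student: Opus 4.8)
The plan is to deduce the statement from two inputs already available: the full faithfulness half of Theorem~\ref{thm:main} (applicable here since $R$ Noetherian makes $(S,S_0)=(\Spec(R),V(\pi))$ of type~(N)) for full faithfulness, and Proposition~\ref{prop:affine-gluing-base-case} for essential surjectivity. Note that I do \emph{not} invoke the essential surjectivity of Theorem~\ref{thm:main}, since that requires $S$ to be a $G$-space whereas here only Noetherianity is assumed; the whole point is that Proposition~\ref{prop:affine-gluing-base-case} supplies the inverse construction directly. Before anything else I would check that $\mct$ carries the subcategory of finite type affine $R$-schemes into the subcategory of affine gluing triples, so that the claimed restriction is well defined.

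For this well-definedness, let $X=\Spec(D)$ with $D$ a finitely generated $R$-algebra. Then $X^\circ=\Spec(D[\nicefrac{1}{\pi}])$ is affine with $A:=D[\nicefrac{1}{\pi}]$ finitely generated over $R[\nicefrac{1}{\pi}]$, giving~(a); the completion $\wh{X}=\Spf(\wh{D})$ is affine with $\wh{D}$ topologically finitely generated over $\wh{R}$, giving~(b); since $X\to S$ is affine, hence separated, Proposition~\ref{prop:j-map} shows that $j_X\colon \Spa(\wh{D}[\nicefrac{1}{\pi}])\to\Spec(A)^{\an}$ is an open embedding, which is~(c); and the density remark preceding Definition~\ref{def:affine-triples}---that $D[\nicefrac{1}{\pi}]\to\wh{D}[\nicefrac{1}{\pi}]$ has dense image---is exactly~(d). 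Thus $\mct(X)$ is an affine gluing triple. Full faithfulness is then formal: finite type affine $R$-schemes form a full subcategory of $\cat{AlgSp}^{\mr{lft,sep}}_S$ (affine schemes being separated and of finite type), affine gluing triples form a full subcategory of $\cat{Trip}^\mr{sep}_{(S,S_0)}$ (Definition~\ref{def:affine-triples} constrains only objects), and $\mct$ maps the former into the latter; since $\mct$ is fully faithful on the ambient categories by Theorem~\ref{thm:main}, its restriction stays fully faithful.

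It remains to prove essential surjectivity. Given an affine gluing triple $(U,\mf{X},j)=(\Spec(A),\Spf(\wh{B}),j)$ satisfying~(a)--(d), I set $C=\wh{B}[\nicefrac{1}{\pi}]$ and form the pullback $D=A\times_C\wh{B}$ along $j^\ast\colon A\to C$ and the localization $\wh{B}\to C$. Proposition~\ref{prop:affine-gluing-base-case}, whose hypotheses are precisely~(a)--(d), shows that $D$ is a finitely generated $R$-algebra with $D[\nicefrac{1}{\pi}]=A$ and $\wh{D}=\wh{B}$. Taking $X=\Spec(D)$, these identifications give $\mct(X)=(\Spec(A),\Spf(\wh{B}),j_X)$, so the remaining task is to identify $j_X$ with $j$. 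Both are the open embeddings induced by the ring map $A=D[\nicefrac{1}{\pi}]\to\wh{D}[\nicefrac{1}{\pi}]=C$: chasing the defining square of the pullback, the projection $D\to A$ followed by $j^\ast$ agrees with $D\to\wh{B}\to C$, and inverting $\pi$ (using $D[\nicefrac{1}{\pi}]=A$ and $\wh{D}=\wh{B}$) yields exactly $j^\ast=j_X^\ast$. Hence $\mct(X)\cong(U,\mf{X},j)$, so $\mct$ is essentially surjective and therefore an equivalence onto the affine gluing triples.

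The genuinely hard work is entirely isolated in Proposition~\ref{prop:affine-gluing-base-case} (deferred to \S\ref{ss:proof-of-fg}): the finite generation of the pullback $D$ together with the computation of its localization and completion. Within the present argument the only point needing care is the final one---checking that the open embedding $j_X$ produced by $\mct$ coincides with the prescribed $j$---and there the commutativity of the pullback square, together with the density condition~(d), is exactly what forces the two to agree.
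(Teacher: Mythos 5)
Your proposal is correct and follows essentially the same route as the paper, which likewise deduces full faithfulness from Theorem~\ref{thm:main} (more precisely Corollary~\ref{cor:fully-faithfulness-schemes}) and essential surjectivity from Proposition~\ref{prop:affine-gluing-base-case}; your additional checks (that $\mct$ lands in affine triples, and that $j_X$ agrees with the prescribed $j$ via the pullback square) are details the paper leaves implicit but are carried out correctly.
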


\begin{proof} 
The fully faithfulness of $\mct$ follows Theorem \ref{thm:main} (or more precisely  Corollary \ref{cor:fully-faithfulness-schemes} below), and the essential surjectivity follows from Proposition \ref{prop:affine-gluing-base-case}.
\end{proof}

\begin{defin} \label{def:schematic-triple}
    A gluing triple  $(U,\mf{X},j)$ is \emph{schematic} if it admits an open cover by affine gluing triples $(U_i, \mf{X}_i, j_i)$. 
\end{defin}

\begin{cor}\label{cor:schematic-triple}
Let $(S,S_0)$ be as in Proposition \ref{prop:affine-triples}. Then $\mct$ induces an equivalence between the category of schemes locally of finite type over $S$ and that of schematic gluing triples over $(S,S_0)$.
\end{cor}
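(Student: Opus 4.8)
The plan is to globalize the affine equivalence of Proposition~\ref{prop:affine-triples} by a Zariski-gluing argument, treating schemes and schematic triples alike as objects assembled from affine charts; the $G$-space hypothesis of Theorem~\ref{thm:main} is not needed since everything is built from the explicit affine case. First I would check that $\mct$ really lands among schematic triples. Given a scheme $X$ locally of finite type over $S$, pick an affine open cover $\{X_i=\Spec(A_i)\}$ with each $A_i$ of finite type over $R$. Each $\mct(X_i)=\bigl(\Spec(A_i\ip),\Spf(\wh{A}_i),j_{X_i}\bigr)$ satisfies Definition~\ref{def:affine-triples}: it is affine on both sides, the completion map $A_i\ip\to\wh{A}_i\ip$ has dense image, and $j_{X_i}$ is an open immersion because the affine (hence separated) morphism $X_i\to S$ qualifies in Proposition~\ref{prop:j-map}. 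Since forming the generic fibre $(-)^\circ$ and the $\pi$-adic completion $\wh{(-)}$ both preserve open immersions and carry the cover $\{X_i\to X\}$ to open covers of $X^\circ$ and of $\wh{X}$, the family $\{\mct(X_i)\to\mct(X)\}$ is an open cover of gluing triples, so $\mct(X)$ is schematic. Full faithfulness of $\mct$ on schemes locally of finite type over $S$ is Corollary~\ref{cor:fully-faithfulness-schemes} (alternatively, it follows from the affine case by the same descent argument used below).

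For essential surjectivity, let $(U,\mf{X},j)$ be schematic with an affine cover $\{T_i=(U_i,\mf{X}_i,j_i)\}$. By Proposition~\ref{prop:affine-triples} each $T_i\cong\mct(X_i)$ for a finite type affine scheme $X_i$ over $S$. I would then form the overlaps $T_{ij}=T_i\times_{(U,\mf{X},j)}T_j$, which are open sub-triples of $T_i$, and identify each as $\mct(V_{ij})$ for an open subscheme $V_{ij}\subseteq X_i$. This rests on the claim that open sub-triples of $\mct(X_i)$ correspond bijectively to open subschemes of $X_i$: openness is preserved and reflected by $\mct$ (Proposition~\ref{prop:triples-morphisms-properties}), an open sub-triple is covered by affine sub-triples that are $\mct$ of distinguished opens of $X_i$ via Proposition~\ref{prop:affine-triples}, and the opens prescribed on the generic and special sides glue to a single open of $|X_i|$ because compatibility with $j$ and the specialization map forces them to match. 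Full faithfulness then upgrades the canonical isomorphisms $T_{ij}\cong T_{ji}$ to scheme isomorphisms $\varphi_{ij}\colon V_{ij}\isomto V_{ji}$ satisfying the cocycle condition, producing a Zariski gluing datum $\{X_i,V_{ij},\varphi_{ij}\}$; gluing these schemes along the open subschemes $V_{ij}$ yields a scheme $X$ locally of finite type over $S$.

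It remains to check $\mct(X)\cong(U,\mf{X},j)$. Because $\mct$ preserves open immersions and the colimit defining $X$ is computed separately on $X^\circ$ and on $\wh{X}$, both of which commute with Zariski gluing, $\mct(X)$ is the colimit in $\cat{Trip}_{(S,S_0)}$ of the datum $\{\mct(X_i),\mct(V_{ij})\}$; but $(U,\mf{X},j)$ is the colimit of the very same datum, effective by Proposition~\ref{prop:trip-descent}, so the two coincide. I expect the main obstacle to be the overlap identification of the middle step: showing that an open sub-triple of an affine schematic triple is exactly $\mct$ of an open subscheme, which amounts to verifying that opens prescribed on $X_i^\circ$ and on $\wh{X}_i$ are compelled by the specialization morphism to descend from one open subset of $|X_i|$, together with the bookkeeping confirming that $\mct$ commutes with the Zariski colimit.
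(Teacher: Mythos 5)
Your argument is correct and is exactly the Zariski-gluing globalization of Proposition~\ref{prop:affine-triples} that the paper leaves implicit (it states the corollary without proof). The one step you flag as delicate --- that open sub-triples of $\mct(X_i)$ are precisely the images under $\mct$ of open subschemes of $X_i$ --- is immediate from Proposition~\ref{prop:triple-scheme-same-space-different-day}, whose definition of the topology on $|\mct(X_i)|\simeq |X_i|$ encodes exactly the compatibility with $j$ and the specialization map that you describe.
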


\begin{remark} 
A characterization similar to that in Proposition~\ref{prop:affine-triples} appears in \cite[Remark 1.6]{BLNeron} in the case of Type (V) and with the restriction that the triples have reduced special fiber. This latter condition is quite restrictive and considerably simplifies the proof of the important Proposition \ref{prop:affine-gluing-base-case} as, with notation in that proposition, $D$ is then the elements of $A$ with $\pi$-adic absolute value bounded by $1$.
\end{remark}

Finally, we observe that Theorem \ref{thm:main} (together with (\stacks[Proposition]{03XX}) gives a more practical criterion for proving that a separated gluing triple is schematic.

\begin{prop} 
    If $S$ is a $G$-space a separated gluing triple $(U,\mf{X},j)$ over $(S,S_0)$ is schematic if it admits a separated and locally quasi-finite map to a separated schematic gluing triple.
\end{prop}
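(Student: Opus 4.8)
The strategy is to transport the hypothesis across the equivalence of Theorem~\ref{thm:main} and then invoke Zariski's Main Theorem for algebraic spaces in the form of \stacks[Proposition]{03XX}. Write $g\colon (U,\mf{X},j)\to (U',\mf{X}',j')$ for the given separated, locally quasi-finite morphism of separated gluing triples, whose target $(U',\mf{X}',j')$ is schematic. Since $S$ is a $G$-space, Theorem~\ref{thm:main} supplies the quasi-inverse $\mcg$, and applying it produces a morphism $f=\mcg(g)\colon X\to X'$ of separated algebraic spaces locally of finite type over $S$, where $X=\mcg(U,\mf{X},j)$ and $X'=\mcg(U',\mf{X}',j')$, and such that $\mct(f)\simeq g$.

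First I would transfer the hypotheses from $g$ to $f$. By Proposition~\ref{prop:triples-morphisms-properties}, applied to the properties \emph{separated} and \emph{locally quasi-finite}, the morphism $f$ inherits both of these from $\mct(f)\simeq g$. Next, since the target triple is schematic, its gluing $X'$ is a scheme: this is Corollary~\ref{cor:schematic-triple} when $(S,S_0)=(\Spec(R),V(\pi))$, and in general it follows by reducing to that case over an \'etale cover of $S$ by Noetherian affine schemes (which exists because $S$ is a $G$-space), using the \'etale descent of Proposition~\ref{prop:trip-descent} and its corollary.

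The crux is then \stacks[Proposition]{03XX}: a separated, locally quasi-finite morphism of algebraic spaces is representable by schemes. Applying this to $f\colon X\to X'$ and using that $X'$ is a scheme shows that $X$ is a scheme. Consequently $(U,\mf{X},j)\simeq\mct(X)$ is the triple associated to a scheme, and is therefore schematic, which is what we wanted.

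The main obstacle is not this final step, which is formal once the translation is in place, but rather the identification used in the middle paragraph: matching the notion of a \emph{schematic gluing triple} over a general $G$-space with the condition that its gluing be a scheme. Affine, and hence schematic, triples were only introduced over a Noetherian affine base in Definition~\ref{def:affine-triples}, so one must check that schematicity of a triple and the property of $\mcg(-)$ being a scheme are \'etale-local on the base and agree locally via Corollary~\ref{cor:schematic-triple}. Carrying out this bookkeeping carefully---ensuring in particular that the \'etale-local affine charts of $S$ are compatible with the closed subspace $S_0$---is where the genuine work lies.
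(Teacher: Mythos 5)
Your proof is correct and follows essentially the same route the paper intends: the paper states this proposition without a written proof, noting only that it follows from Theorem \ref{thm:main} together with \stacks[Proposition]{03XX}, and your argument (transport across the equivalence, transfer of ``separated'' and ``locally quasi-finite'' via Proposition \ref{prop:triples-morphisms-properties}, identification of schematic triples with schemes via Corollary \ref{cor:schematic-triple}, then Zariski's Main Theorem for algebraic spaces) is exactly the intended chain of reasoning. Your closing remark about the bookkeeping needed to make sense of ``schematic'' over a non-affine base is a fair observation about a point the paper leaves implicit.
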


\subsection{\texorpdfstring{Good reduction of algebraic spaces over $K$ vs.\@ $\wh{K}$}{Good reduction of algebraic spaces over K vs. K hat}} 

Examples similar to Example \ref{eg:Matsumoto} arise naturally in simple questions regarding good reduction of a smooth proper $\mathbb{Q}$-scheme $X$:
\vspace*{2 pt}
\begin{quotation}
    \emph{If $X_{\mathbb{Q}_p}$ has good reduction (i.e., admits a smooth proper model over $\mathbb{Z}_p$), does $X$ have good reduction at $p$ (i.e., admit a smooth proper model over $\mathbb{Z}_{(p)}$)?}
\end{quotation}
\vspace*{2 pt}

\noindent The following example shows this question has a negative answer in general.

\begin{example}[Poonen] \label{eg:Poonen} In \cite[Warning 3.5.79]{Poonen} there is constructed a smooth quartic surface $X\subseteq\mathbb{P}^3_\mathbb{Q}$ which \emph{does not} admit a smooth proper model over $\mathbb{Z}_{(p)}$, but such that $X_{\mathbb{Q}_p}$ \emph{does} admit a~smooth proper model $\mc{X}_{\mathbb{Z}_p}$ over $\mathbb{Z}_p$. In particular, $\mc{X}= \mcg(X_{\mathbb{Q}},\wh{\mc{X}}_{\mathbb{Z}_p},j_{\mc{X
}_{\mathbb{Z}_p}})$ is another (see Example \ref{eg:Matsumoto}) non-representable algebraic space with representable components.
\end{example}

As is implicit in Example \ref{eg:Poonen} (see also \cite[Remark 3.5.80]{Poonen}), Theorem \ref{thm:main}, or the earlier results of \cite{ArtinII}, show that the answer to the above-posed question is no, but only because we insisted on having a model which is a scheme. More generally, Theorem \ref{thm:main} implies the following result (which is not directly deducible from \cite{ArtinII}). 

\begin{prop}\label{prop:models-over-K-vs-hat-K} 
    Let $K$ be a discrete valuation field with excellent valuation ring $\mcO$. Let
    \begin{itemize} 
    \item $X$ be an algebraic space separated and locally of finite type over $K$,
    \item and $\mc{X}_{\wh{\mcO}}$ be a separated and locally of finite type algebraic space model of $X_{\wh{K}}$ over $\wh{\mcO}$.
    \end{itemize}
    Then, there exists a unique separated and locally of finite type algebraic space model $\mc{X}$ over $\mcO$ of $X$ whose base change to $\wh{\mcO}$ is isomorphic to $\mc{X}_{\wh{\mcO}}$.
\end{prop}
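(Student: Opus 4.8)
The plan is to apply Theorem~\ref{thm:main} to the two pairs $(S,S_0)=(\Spec(\mcO),V(\pi))$ and $(\wt S,\wt S_0)=(\Spec(\wh\mcO),V(\pi))$ simultaneously, exploiting the fact that they share the same formal and rigid data. Both pairs are of type (N), and since $\mcO$ is excellent and $\wh\mcO$ is a complete (hence excellent) discrete valuation ring, both $S$ and $\wt S$ are $G$-spaces; thus $\mct$ is an equivalence over each. The key observation is that although $S^\circ=\Spec(K)$ and $\wt S^\circ=\Spec(\wh K)$ differ, the completions coincide, $\wh S=\Spf(\wh\mcO)=\wh{\wt S}$, and so do the rigid loci, $\wh S^{\mr{rig}}=\Spa(\wh K)=\wh{\wt S}{}^{\mr{rig}}$.

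First I would analyze the base change functor on triples along $\wt S\to S$. For any $U$ locally of finite type over $\Spec(K)$, Definition~\ref{def:analytification-scheme} identifies $U^{\mr{an}}=U\times_{\Spec(K)}\Spa(\wh K)$ with $(U_{\wh K})^{\mr{an}}$, where $U_{\wh K}=U\times_{\Spec(K)}\Spec(\wh K)$. Combined with $\wh S=\wh{\wt S}$, this shows the base change functor $\cat{Trip}^{\mr{sep}}_{(S,S_0)}\to\cat{Trip}^{\mr{sep}}_{(\wt S,\wt S_0)}$ acts by $(U,\mf X,j)\mapsto(U_{\wh K},\mf X,j)$, leaving the formal part and the open embedding $j$ untouched. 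Moreover, since the generic fiber, the formal completion, and the map $j_{(-)}$ are all compatible with base change along $\mcO\to\wh\mcO$ --- using that formal completion along the special fiber only depends on the reductions modulo $\pi^n$, on which $\mcO\to\wh\mcO$ is an isomorphism --- the functor $\mct$ commutes with base change in the evident sense.

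Next I would build the target triple and glue. The map $j\colon\wh{\mc X_{\wh\mcO}}{}^{\mr{rig}}\to (X_{\wh K})^{\mr{an}}=X^{\mr{an}}$ attached to the given model $\mc X_{\wh\mcO}$ assembles, together with $X$ and $\wh{\mc X_{\wh\mcO}}$, into a separated gluing triple $T:=(X,\wh{\mc X_{\wh\mcO}},j)$ over $(S,S_0)$ (here $j$ is an open embedding by Proposition~\ref{prop:j-map}, and the separatedness and finite type hypotheses are inherited from $\mc X_{\wh\mcO}$ and $X$). By the previous paragraph, the base change of $T$ to $(\wt S,\wt S_0)$ is precisely $\mct(\mc X_{\wh\mcO})$. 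I then set $\mc X:=\mcg(T)$, so that $\mct(\mc X)\simeq T$ by Theorem~\ref{thm:main}; reading off the first component gives $\mc X_K\simeq X$, i.e.\ $\mc X$ is a model of $X$. For the remaining claim, $\mct(\mc X\times_\mcO\wh\mcO)$ is the base change of $\mct(\mc X)\simeq T$, namely $\mct(\mc X_{\wh\mcO})$, whence full faithfulness of $\mct$ over $(\wt S,\wt S_0)$ yields an isomorphism $\mc X\times_\mcO\wh\mcO\simeq\mc X_{\wh\mcO}$.

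Finally, uniqueness follows from full faithfulness of $\mct$ over $(S,S_0)$: any model $\mc X'$ of $X$ with $\mc X'\times_\mcO\wh\mcO\simeq\mc X_{\wh\mcO}$ has formal completion $\wh{\mc X'}\simeq\wh{\mc X'\times_\mcO\wh\mcO}\simeq\wh{\mc X_{\wh\mcO}}$, so $\mct(\mc X')\simeq T\simeq\mct(\mc X)$ and hence $\mc X'\simeq\mc X$. I expect the only genuinely delicate point to be the compatibility of $\mct$ (and of the base change functor on triples) with completion of the base $\mcO\to\wh\mcO$; this is exactly what makes the two triple categories interact so simply, and it reduces to the identification $U^{\mr{an}}=(U_{\wh K})^{\mr{an}}$ together with the insensitivity of $\pi$-adic completion to completing the base.
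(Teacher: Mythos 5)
Your proof is correct and follows exactly the route the paper intends: the paper gives no separate argument for this proposition, presenting it as an immediate consequence of Theorem~\ref{thm:main}, and your write-up supplies precisely the expected details (the identifications $\wh{S}=\Spf(\wh{\mcO})=\wh{\wt{S}}$ and $U^{\mr{an}}=(U_{\wh{K}})^{\mr{an}}$, the construction of the triple $(X,\wh{\mc{X}_{\wh{\mcO}}},j)$, and full faithfulness over both bases for the base-change identification and uniqueness). No gaps.
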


\subsection{N\'eron models and triples} 

It was already observed in \cite{BLNeron} that a theory like that of gluing triples is useful to understand N\'eron models of abelian varieties. This usefulness can also be seen to holds if one replaces abelian varieties by tori, as the following example illustrates. 

\begin{example}\label{eg:Neron-model-Gm} 
Let $K$ be a discrete valuation field with valuation ring $\mcO$ and uniformizer $\pi$. Set
\begin{equation*}
    U_K= \mathbb{G}_{m,K},\qquad \mf{X}_\mcO= \bigsqcup_{n\in\mathbb{Z}}\wh{\mathbb{G}}_{m,\wh{\mcO}}\simeq \wh{\bb{G}}_{m,\wh{\mcO}}\times \underline{\mathbb{Z}}.
\end{equation*}
For an element $\omega$ of $\mcO$, define the open embedding $j_\omega\colon \mf{X}_\mcO^\mr{rig}\hookrightarrow U_K^\mr{an}$ on the $n^\text{th}$ component via 
\begin{equation*}
    \wh{\mathbb{G}}_{m,\wh{\mcO}}^\mr{rig}\isomto \{x\in\mathbb{G}_{m,\wh{K}}^\mr{an}:|x|=|\omega^n|\}\subseteq \mathbb{G}_{m,\wh{K}}^\mr{an}.
\end{equation*}
It is clear that $(U_K,\mf{X}_\mcO,j_\omega)$ defines a group object in the category separated gluing triples over $(\Spec(\mcO),V(\pi))$. Moreover, taking $\omega$ to be $\pi$, its gluing gives the N\'eron model of $\bb{G}_{m,K}$ over $\mcO$.
\end{example}

Example \ref{eg:Neron-model-Gm} nicely illustrates why N\'eron models do not commute with ramified base change. Namely, if $\mcO\subseteq \mcO'$ is an extension then it is clear that the base change of the triple $(U_K,\mf{X}_\mcO,j_\pi)$ along the map $(\Spec(\mcO'),V(\pi))\to (\Spec(\mcO),V(\pi))$ is $(U_{K'},\mf{X}_{\mcO'},j_\pi)$. But, the N\'eron model for $\mathbb{G}_{m,K'}$ over $\mcO'$ is $(U_{K'},\mf{X}_{\mcO'},j_{\pi'})$ where $\pi'$ is a uniformizer of $\mcO'$. It is easy to check that 
\begin{equation*}
     (U_{K'},\mf{X}_{\mcO'},j_\pi)\simeq  (U_{K'},\mf{X}_{\mcO'},j_{\pi'})\,\,\Longleftrightarrow\,\, |\pi|=|\pi'|\,\,\Longleftrightarrow \,\,\mcO\subseteq\mcO'\text{ is unramified},
\end{equation*}
as predicted by the behavior of N\'eron models under base change.

We find it an interesting question whether one can extend Example \ref{eg:Neron-model-Gm} to arbitrary tori.

\begin{question} 
Can one extend Example \ref{eg:Neron-model-Gm} to construct N\'eron models of all tori, and their finite-type and connected analogues as studied in Bruhat--Tits theory (see \cite[Appendix B]{KalethaPrasad})?
\end{question}

\begin{remark}\label{rem:BLNeron} 
We lastly remark that Theorem \ref{thm:main} also conceptually simplifies the construction in \cite{BLNeron}. In our terminology and with notation as in Example \ref{eg:Neron-model-Gm}, in \cite{BLNeron} they construct for an abelian variety $A$ over $K$ a gluing triple $(A,\mf{A},j)$ so that if $(A,\mf{A},j)=\mct(\mc{A})$ for some group $\mcO$-scheme $\mc{A}$, then $\mc{A}$ is the N\'eron model of $A$. Without Theorem \ref{thm:main}, the construction of $\mc{A}$ is quite involved. But, using Theorem \ref{thm:main}, $\mcg(A,\mf{A},j)$ is a separated group algebraic $\mcO$-space which is automatically representable by \cite[Theorem 3.3.1]{RaynaudSpecialisation}.
\end{remark}

\subsection{Algebraizability of a formal scheme is detected by its rigid locus} 

Finally, we point out that using Theorem \ref{thm:main} one can show that algebraizability of a formal scheme is detected by its rigid locus if one expands the notion of algebraizability to include algebraic spaces. The following example illustrates this idea.

\begin{example}[Tate uniformization]\label{eg:DR}  Let $K$ be a non-archimedean field with residue field $k$. Let $E$ an elliptic curve over $K$ with split multiplicative reduction and let $\mc{E}$ be its minimal Weierstrass model over $\mcO$ (e.g., \cite[\S VII.1]{Silverman}). If $k$ is the residue field of $K$, then $\mc{E}\simeq \mathbb{P}^1_k/(0\sim\infty)$ which admits a $\mathbb{Z}$-covering space $f\colon \mc{X}_k\to\mc{E}_k$ by an infinite chain of copies of $\mathbb{P}^1_k$ glued pole-to-pole. Let $\mathfrak{X}\to\mf{E}$ be the unique \'etale deformation of $f$. One has an isomorphism $j\colon \mf{X}^{\rm rig} \isomto \mathbb{G}^\mr{an}_{m,K}$, and hence by Theorem \ref{thm:main} there exists an algebraic space $\mc{X}$ locally of finite type over $\cO_K$ with generic fiber $\mathbb{G}_{m,K}$ and formal completion $\mf{X}$. Using Corollary~\ref{cor:schematic-triple} it is not hard to see that $\mc{X}$ is in fact a scheme. Such a scheme has been explicitly described in \cite[\S IV.1]{DeligneRapoport}.
\end{example}

To explain the general application of Theorem \ref{thm:main}, we first give some notation. Fix a separated locally of finite type formal algebraic space $\mf{X}$ over $\wh{S}$. Set 
\begin{itemize} 
    \item $\mc{A}(\mf{X})$ to be the category of \emph{algebraizations} of $\mf{X}$, i.e., pairs $(X,\iota)$ where $X$ is an algebraic space separated and locally of finite type over $S$ and $\iota\colon \mf{X}\isomto \wh{X}$ an isomorphism,
    \item $\mc{E}(\mf{X}_\eta)$ to be the category of pairs $(U,j)$ where $U$ is an algebraic space separated and locally of finite type over $S^\circ$ and $j\colon \mf{X}^\mr{rig}\to U^\mr{an}$ an open embedding.
    \end{itemize}
The following proposition is an immediate corollary of, and largely equivalent to, Theorem \ref{thm:main}.

\begin{prop}\label{prop:A-E-equiv} Suppose $S$ is a $G$-space. Then, the functor
    \begin{equation*}
        \mc{A}(\mf{X})\to \mc{E}(\mf{X}_\eta),\quad (X,\iota)\mapsto (\wh{X},j_X\circ \iota^\mr{rig})
    \end{equation*}
    is an equivalence with quasi-inverse given by sending $(U,j)$ to $\mcg(U,\mf{X},j)$. 
\end{prop}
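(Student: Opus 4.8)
The plan is to deduce this directly from Theorem \ref{thm:main}, realizing both categories $\mc{A}(\mf{X})$ and $\mc{E}(\mf{X}_\eta)$ as fibers of the functor $\mct$ (respectively of its quasi-inverse $\mcg$) over the fixed formal algebraic space $\mf{X}$. The key observation is that the functor $\mct\colon \cat{AlgSp}_S^{\mr{lft},\mr{sep}}\to\cat{Trip}^{\mr{sep}}_{(S,S_0)}$ records, among its data, the formal completion $\wh{X}$; so one should view an algebraization $(X,\iota)$ of $\mf{X}$ as an object $X$ of $\cat{AlgSp}_S^{\mr{lft},\mr{sep}}$ together with an identification of the ``$\mf{X}$-component'' of $\mct(X)$ with $\mf{X}$, and similarly view a pair $(U,j)\in\mc{E}(\mf{X}_\eta)$ as a separated gluing triple $(U,\mf{X},j)$ whose formal component is rigidly $\mf{X}$ itself. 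In other words, both categories are ``$\mf{X}$-indexed slices'' of the two sides of the equivalence in Theorem \ref{thm:main}, and the proposition should follow by restricting that equivalence to these slices.

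Concretely, I would first record the elementary fact that the category $\mc{E}(\mf{X}_\eta)$ is canonically isomorphic to the full subcategory of $\cat{Trip}^{\mr{sep}}_{(S,S_0)}$ consisting of those separated gluing triples whose formal part \emph{is} $\mf{X}$ and whose morphisms are the identity on $\mf{X}$; indeed, an object $(U,j)$ of $\mc{E}(\mf{X}_\eta)$ is exactly the data of a triple $(U,\mf{X},j)$ with $j$ an open embedding, and a morphism $(U_1,j_1)\to(U_2,j_2)$ is a map $\alpha\colon U_1\to U_2$ compatible with the open embeddings, i.e.\ precisely a morphism of triples $(U_1,\mf{X},j_1)\to(U_2,\mf{X},j_2)$ whose formal component is $\mathrm{id}_{\mf{X}}$. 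Second, I would identify $\mc{A}(\mf{X})$ with the corresponding slice on the algebraic side: by Proposition \ref{prop:j-map}, for $X$ separated over $S$ the map $j_X$ is an open embedding, so $\mct(X)=(X^\circ,\wh{X},j_X)$, and an algebraization $(X,\iota)$ supplies an isomorphism $\iota\colon\mf{X}\isomto\wh{X}$; thus $(X,\iota)$ determines a separated triple $(X^\circ,\mf{X},j_X\circ\iota^{\mr{rig}})$ lying in the slice just described. This is exactly the functor $\mc{A}(\mf{X})\to\mc{E}(\mf{X}_\eta)$ of the statement.

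The main step is then to invoke Theorem \ref{thm:main}. Since $S$ is a $G$-space, $\mct$ is an equivalence with quasi-inverse $\mcg$; an equivalence of categories restricts to an equivalence between any full subcategory and its essential image, and more to the point it induces an equivalence between the slices cut out by fixing the $\mf{X}$-component up to the identity. Under $\mct$, an algebraization $(X,\iota)$ maps to the triple $(X^\circ,\mf{X},j_X\circ\iota^{\mr{rig}})$, and conversely $\mcg$ applied to a triple $(U,\mf{X},j)$ in the slice produces an algebraic space $\mcg(U,\mf{X},j)$ whose associated triple is $(U,\mf{X},j)$, hence in particular whose formal completion is canonically identified with $\mf{X}$ — this identification is the $\iota$ making $\mcg(U,\mf{X},j)$ into an algebraization. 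Tracking these identifications shows the two functors are mutually quasi-inverse, which is exactly the assertion.

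The only genuine subtlety — and the step I would be most careful about — is the bookkeeping of the fixed isomorphism $\iota$ and the compatibility of morphisms with it, i.e.\ checking that fixing the formal component strictly (as $\mathrm{id}_{\mf{X}}$, rather than up to isomorphism) on both sides produces categories that correspond under the equivalence without introducing spurious automorphisms or collapsing distinct objects. Because $\mct$ is fully faithful, a morphism in $\mc{A}(\mf{X})$ commuting with the $\iota$'s corresponds to a morphism of triples commuting with the formal identifications, and vice versa; so fully faithfulness of $\mct$ handles this automatically, and no new input beyond Theorem \ref{thm:main} and Proposition \ref{prop:j-map} is needed. I would therefore expect the proof to be short, essentially a careful ``slice of an equivalence is an equivalence'' argument, with the openness of $j_X$ from Proposition \ref{prop:j-map} ensuring that the functor indeed lands in $\mc{E}(\mf{X}_\eta)$.
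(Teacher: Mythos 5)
Your proposal is correct and matches the paper's treatment: the paper offers no separate argument, simply asserting that the proposition "is an immediate corollary of, and largely equivalent to, Theorem \ref{thm:main}," and your slice-of-an-equivalence bookkeeping is exactly the content of that assertion (including the use of Proposition \ref{prop:j-map} to see that $j_X$ is an open embedding, so the functor lands in $\mc{E}(\mf{X}_\eta)$). As a minor point, your reading of the first component of the image as $X^\circ$ rather than the $\wh{X}$ printed in the statement is the right one, consistent with the analogous Proposition \ref{prop:FEt-algebraizability}.
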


Let us say that a separated rigid algebraic space $\mathsf{X}$ over $\wh{S}^\mr{rig}$ is \emph{weakly algebraizable} if there exists an algebraic space $U$  separated and locally of finite type over $S^\circ$ and an open embedding $\mathsf{X}\hookrightarrow U^\mr{an}$. This is a better behaved notion than that of literal algebraizability, as $\mathsf{S}$ can never be algebraizable if it is quasi-compact and not proper over $\wh{S}^\mr{rig}$. 

\begin{cor} Suppose that $S$ is a $G$-space and $\mf{X}$ is a formal algebraic space separated and locally of finite type over $\wh{S}$. Then, $\mf{X}$ is algebraizable if and only if $\mf{X}^\mr{rig}$ is weakly algebraizable.
\end{cor}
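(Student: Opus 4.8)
The plan is to deduce this corollary directly from Proposition \ref{prop:A-E-equiv}, which has already packaged most of the content. The statement to prove is the equivalence: $\mf{X}$ is algebraizable $\iff$ $\mf{X}^\mr{rig}$ is weakly algebraizable. Here I take ``$\mf{X}$ is algebraizable'' to mean that the category $\mc{A}(\mf{X})$ of algebraizations is nonempty.

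For the forward direction, suppose $\mf{X}$ is algebraizable, so there exists a pair $(X,\iota) \in \mc{A}(\mf{X})$. Applying the functor of Proposition \ref{prop:A-E-equiv} produces an object $(\wh{X}, j_X \circ \iota^\mr{rig}) \in \mc{E}(\mf{X}_\eta)$; unwinding, this is an algebraic space $X^\circ$ separated and locally of finite type over $S^\circ$ together with an open embedding $\mf{X}^\mr{rig} \hookrightarrow (X^\circ)^\mr{an}$. By the very definition of weak algebraizability, this exhibits $\mf{X}^\mr{rig}$ as weakly algebraizable, with witness $U = X^\circ$.

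For the converse, suppose $\mf{X}^\mr{rig}$ is weakly algebraizable, so there is an algebraic space $U$ separated and locally of finite type over $S^\circ$ together with an open embedding $j\colon \mf{X}^\mr{rig} \hookrightarrow U^\mr{an}$. This is precisely the data of an object $(U,j) \in \mc{E}(\mf{X}_\eta)$. Since $S$ is a $G$-space, Proposition \ref{prop:A-E-equiv} gives an equivalence $\mc{A}(\mf{X}) \isomto \mc{E}(\mf{X}_\eta)$; in particular $\mc{E}(\mf{X}_\eta)$ is nonempty if and only if $\mc{A}(\mf{X})$ is. Thus $(U,j)$ lies in the essential image, and applying the quasi-inverse $(U,j)\mapsto \mcg(U,\mf{X},j)$ produces an algebraization of $\mf{X}$, so $\mf{X}$ is algebraizable.

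Since both implications amount to transporting objects across the equivalence of Proposition \ref{prop:A-E-equiv} and reading off the definitions, there is essentially no obstacle here; the substance has already been absorbed into that proposition (and ultimately into Theorem \ref{thm:main}). The only point requiring a word of care is matching the two notions: one must observe that ``$\mf{X}$ algebraizable'' is exactly the assertion $\mc{A}(\mf{X}) \neq \varnothing$ and ``$\mf{X}^\mr{rig}$ weakly algebraizable'' is exactly $\mc{E}(\mf{X}_\eta)\neq \varnothing$, after which the biconditional is immediate from the equivalence of categories being nonemptiness-preserving in both directions.
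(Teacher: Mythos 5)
Your proposal is correct and follows exactly the route the paper intends: the corollary is stated as an immediate consequence of Proposition \ref{prop:A-E-equiv} (the paper gives no separate proof), and your argument simply makes explicit that an equivalence of categories preserves and reflects nonemptiness, with algebraizability and weak algebraizability being nonemptiness of $\mc{A}(\mf{X})$ and $\mc{E}(\mf{X}_\eta)$ respectively.
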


\subsection{Gluing torsors} Theorem \ref{thm:main} implies that one can always glue a $G$-torsor over $\mct(X)$ to one over $X$, in a way which we now make precise. Throughout this subsection let us fix $G$ to be a separated, flat, and locally of finite type group algebraic space over $S$.

Let $X$ be an algebraic space over $S$ and $\mf{X}$ a formal algebraic space over $\wh{S}$. Recall that:
\begin{itemize}
\item a $G$-torsor on $X$ is a locally non-empty sheaf $Q$ on the fppf site of $X$ together with a~simply transitive action $G\times Q\to Q$,
\item a $\wh{G}$-torsor on $\mf{X}$ is a locally non-empty sheaf $\mf{Q}$ on the adic flat site of $\mf{X}$ together with a~simply transitive action $\wh{G}\times\mf{Q}\to \mf{Q}$.
\end{itemize}
Recall here that a sheaf $\mathcal{F}$ on a site $\ms{S}$ is \emph{locally non-empty} if for every object $S$ of $\ms{S}$ there exists a cover $\{S_i\to S\}$ such that $\mathcal{F}(S_i)$ is non-empty for all $i$. If there is a group sheaf $\mathcal{G}$ on $\ms{S}$, then an action $\mathcal{G}\times\mathcal{F}\to\mathcal{F}$ is \emph{simply transitive} if the induced map 
\begin{equation*} 
\mathcal{G}\times\mathcal{F}\to\mathcal{F}\times\mathcal{F},\qquad (g,f)\mapsto (gf,f),
\end{equation*}
is an isomorphism. Finally, we recall that the \emph{adic flat site} on $\mf{X}$ consists of all formal schemes over $\mf{X}$ endowed with the topology generated by Zariski open covers and adically faithfully flat maps $\mf{Y}'\to\mf{Y}$ (see \cite[Chapter I, \S4.8.(c)]{FujiwaraKato}). 

We denote these categories of torsors by $\cat{Tors}_G(X)$ and $\cat{Tors}_{\wh{G}}(\mf{X})$, respectively. By \stacks[Lemma]{04SK} any object of $\cat{Tors}_G(X)$ and $\cat{Tors}_{\wh{G}}(\mf{X})$ is represented by a (formal) algebraic space separated and locally of finite type over $X$ and $\mf{X}$, respectively. Thus, we treat torsors implicitly as such algebraic spaces in the sequel.

\begin{defin} 
    Let $(U,\mf{X},j)$ be a gluing triple over $S$. We then define the category $\cat{Tors}_{G}(U,\mf{X},j)$ of \emph{${G}$-torsors over $(U,\mf{X},j)$} to consist of triples $(\mcQ,\mf{Q},\iota)$ where
    \begin{itemize}
        \item $\mcQ$ is an object of $\cat{Tors}_{G}(U)$,
        \item $\mf{{Q}}$ is an object of $\cat{Tors}_{\wh{G}}(\mf{X})$,
        \item $\iota$ is a morphism of analytic adic algebraic spaces $\mf{Q}^\mr{rig}\to \mcQ^\mr{an}\times_{U^\mr{an}}\mf{X}^\mr{rig}$ equivariant for the map of group rigid algebraic spaces ${\wh{G}}^\mr{rig}\to ({G}^\circ)^\mr{an}$ over $\mf{X}^\mr{rig}$.
    \end{itemize}
\end{defin}

Suppose now that $X$ is an algebraic space separated and locally of finite type over $S$. Then, there is a functor
\begin{equation*}
    \tau\colon \cat{Tors}_{G}(X)\to \cat{Tors}_{G}(\mct(X)),\quad Q\mapsto \tau(Q)=(Q^\circ,\wh{Q},\iota_Q),
\end{equation*}
which is well-defined as the natural open embedding $\iota_Q\colon \wh{Q}^\mr{rig}\to (Q^\circ)^{\mr{an}}\times_{(X^\circ)^\mr{an}}\wh{X}^\mr{rig}$ is evidently equivariant for the map of group rigid-algebraic spaces $\wh{G}^\mr{rig}\to (G^\circ)^\mr{an}$.

The following is an immediate consequence of Theorem \ref{thm:main}.

\begin{prop}\label{prop:torsor-gluing} 
    Suppose that $S$ is a $G$-space and let $X$ be a algebraic space separated and locally of finite type over $S$. Then,
    \begin{equation*}
        \tau\colon \cat{Tors}_{G}(X)\to \cat{Tors}_{G}(\mct(X)),\quad Q\mapsto \tau(Q)=(Q^\circ,\wh{Q},\iota_Q).
    \end{equation*}
    is an equivalence of categories.
\end{prop}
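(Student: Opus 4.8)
The plan is to deduce this entirely from Theorem~\ref{thm:main} by recognizing both sides as categories of \emph{internal} torsors and then transporting the torsor axioms across the equivalence $\mct$. First I would use representability: by \stacks[Lemma]{04SK} every object of $\cat{Tors}_G(X)$, $\cat{Tors}_G(U)$, and $\cat{Tors}_{\wh{G}}(\mf{X})$ is represented by a (formal) algebraic space separated and locally of finite type over its base. For such representable objects the two torsor axioms (local non-emptiness and simple transitivity) repackage purely in terms of morphisms and fiber products: an algebraic space $Q$ separated and lft over $X$, with an action $a\colon G\times_S Q\to Q$ over $X$, is a $G$-torsor if and only if (i) the structure map $Q\to X$ is faithfully flat, and (ii) $(a,\mr{pr}_2)\colon G\times_S Q\to Q\times_X Q$ is an isomorphism. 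Indeed, (i) implies local non-emptiness because the diagonal of $Q\times_X Q\simeq G\times_S Q$ supplies a point of $Q$ over the faithfully flat cover $Q\to X$, while conversely fppf-local triviality forces $Q\to X$ to be faithfully flat; and (ii) is exactly simple transitivity. The same reformulation applies verbatim to $\cat{Tors}_G(U)$ (with $G^\circ$ acting) and, using the adic flat site, to $\cat{Tors}_{\wh{G}}(\mf{X})$.

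Next I would bring in Theorem~\ref{thm:main}. The relevant $Q$ lie in $\cat{AlgSp}^{\mr{lft,sep}}_S$, on which $\mct$ is an equivalence onto $\cat{Trip}^\mr{sep}_{(S,S_0)}$; as an equivalence it preserves and reflects fiber products (computed componentwise on triples) and carries the group object $G$ to $\mct(G)=(G^\circ,\wh{G},j_G)$ and $X$ to $\mct(X)$. Thus $\mct$ sends $a$ to a $\mct(G)$-action on $\mct(Q)=(Q^\circ,\wh{Q},j_Q)$ over $\mct(X)$, that is, to a $G^\circ$-action on $Q^\circ/U$ and a $\wh{G}$-action on $\wh{Q}/\mf{X}$ together with the induced rigid comparison $\iota_Q$. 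By Proposition~\ref{prop:triples-morphisms-properties} the properties ``flat'', ``surjective'', and ``isomorphism'' are preserved and reflected by $\mct$, so conditions (i) and (ii) hold for $Q\to X$ if and only if they hold componentwise for $Q^\circ\to U$ and $\wh{Q}\to\mf{X}$. Unwinding the previous paragraph componentwise, this says precisely that $\tau(Q)=(Q^\circ,\wh{Q},\iota_Q)$ is a torsor over the triple $\mct(X)$, with $\iota_Q$ the required equivariant datum. Hence $\tau$ is the restriction of the equivalence $\mct$ to torsor objects: it is fully faithful because a morphism of torsors is a $G$-equivariant map of underlying spaces and equivariance is a diagram condition preserved and reflected by $\mct$, and it is essentially surjective because $\mcg$ applied to a triple torsor $(\mcQ,\mf{Q},\iota)$ yields $Q$ over $X$ whose transported action and axioms make it a $G$-torsor with $\tau(Q)\simeq(\mcQ,\mf{Q},\iota)$.

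The main obstacle is the bookkeeping in the previous paragraph: one must check that a ``$G$-torsor over the triple $\mct(X)$'' as defined---a triple $(\mcQ,\mf{Q},\iota)$ with componentwise torsors and the specific equivariant datum $\iota\colon \mf{Q}^\mr{rig}\to\mcQ^\mr{an}\times_{U^\mr{an}}\mf{X}^\mr{rig}$---is \emph{exactly} a $\mct(G)$-torsor object internal to $\cat{Trip}^\mr{sep}_{(S,S_0)}$ over $\mct(X)$, with no condition on $\iota$ lost or gained. This amounts to verifying that the local non-emptiness built into the definition of torsors over a triple corresponds to componentwise faithful flatness (so that it is matched by $\mct$ via Proposition~\ref{prop:triples-morphisms-properties}), and that because $\mct$ preserves fiber products the datum $\iota$ is automatically the rigid base change of the compatibility between the two component actions. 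Once this dictionary between ``torsor over a triple'' and ``internal torsor in triples'' is in place, the statement follows formally from Theorem~\ref{thm:main}.
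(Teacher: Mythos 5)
Your proposal is correct and takes the same route as the paper, which offers no written proof at all and simply declares the proposition an immediate consequence of Theorem~\ref{thm:main}; your argument is a faithful unpacking of that deduction, using representability via \stacks[Lemma]{04SK}, the reformulation of the torsor axioms as faithful flatness plus the isomorphism $(a,\mr{pr}_2)$, and Proposition~\ref{prop:triples-morphisms-properties} to transfer these conditions across $\mct$. The only detail worth making explicit in your ``bookkeeping'' step is that the equivariant morphism $\iota\colon \mf{Q}^\mr{rig}\to\mcQ^\mr{an}\times_{U^\mr{an}}\mf{X}^\mr{rig}$ is automatically an open embedding (being equivariant along the open subgroup $\wh{G}^\mr{rig}\subseteq (G^\circ)^\mr{an}|_{\mf{X}^\mr{rig}}$), so that composing with the projection to $\mcQ^\mr{an}$ genuinely produces a \emph{separated} gluing triple to which $\mcg$ applies.
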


\subsection{Gluing finite \'etale covers and an integral Riemann existence theorem} 

Finally, we use gluing triples to give a finer understanding of the finite \'etale covers of an algebraic space.

\begin{defin} 
    Let $(U,\mf{X},j)$ be a separated gluing triple over $(S,S_0)$. Then, we define the category $\FEt(U,\mf{X},j)$ of \emph{finite \'etale covers of} to be the category of finite \'etale morphisms  $(U_1,\mf{X}_1,j_1)\to (U,\mf{X},j)$, with morphisms being morphisms of gluing triples over $(U,\mf{X},j)$.
\end{defin}

The following is an immediate corollary of Theorem \ref{thm:main} together with Proposition \ref{prop:triples-morphisms-properties}.

\begin{prop}\label{prop:FEt-gluing} 
    Let $X$ be an algebraic space separated and locally of finite type over $S$. Then,
    \begin{equation*}
        \mct\colon \FEt(X)\to \FEt(\mct(X)),
    \end{equation*}
    is an equivalence of categories.
\end{prop}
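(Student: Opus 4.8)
The plan is to deduce the statement formally from the two results the excerpt advertises: the equivalence $\mct\colon \cat{AlgSp}^{\mr{lft,sep}}_S\isomto \cat{Trip}^\mr{sep}_{(S,S_0)}$ of Theorem \ref{thm:main} (available once $S$ is a $G$-space, a hypothesis I take to be in force here), together with the fact that $\mct$ both reflects and preserves the properties \emph{finite} and \emph{\'etale} (Proposition \ref{prop:triples-morphisms-properties}). Throughout I read $\FEt(X)$ as the category of finite \'etale $Y\to X$ with $X$-morphisms, and $\FEt(\mct(X))$ as the category of finite \'etale morphisms $(U_1,\mf{X}_1,j_1)\to\mct(X)$ in $\cat{Trip}^\mr{sep}_{(S,S_0)}$; in particular the source is a \emph{separated} gluing triple, so that $j_1$ is an open embedding. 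Keeping track of this separatedness is the whole game.

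First I would check that $\mct$ is well defined on these subcategories. If $f\colon Y\to X$ is finite \'etale then $Y$ is again separated and locally of finite type over $S$, so $\mct(Y)$ is a separated gluing triple (Proposition \ref{prop:j-map} gives that $j_Y$ is an open embedding), and $\mct(f)$ is finite \'etale by Proposition \ref{prop:triples-morphisms-properties}; hence $\mct(f)\colon \mct(Y)\to\mct(X)$ is an object of $\FEt(\mct(X))$. Full faithfulness is then inherited from $\mct$: for finite \'etale covers $Y_1,Y_2$ of $X$, a morphism in $\FEt(X)$ is an $S$-morphism $Y_1\to Y_2$ compatible with the structure maps to $X$, and likewise in $\FEt(\mct(X))$; since $\mct$ is fully faithful on $\cat{AlgSp}^{\mr{lft,sep}}_S$ and the bijection $\Hom_S(Y_1,Y_2)\isomto \Hom_{\cat{Trip}^\mr{sep}_{(S,S_0)}}(\mct(Y_1),\mct(Y_2))$ is compatible with post-composition by the structure morphisms, the ``over $X$'' condition matches the ``over $\mct(X)$'' condition, giving the desired bijection on Hom-sets.

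For essential surjectivity I would begin from a finite \'etale cover $g\colon (U_1,\mf{X}_1,j_1)\to\mct(X)$ in $\cat{Trip}^\mr{sep}_{(S,S_0)}$. Because the source is a \emph{separated} gluing triple, Theorem \ref{thm:main} produces an algebraic space $Y=\mcg(U_1,\mf{X}_1,j_1)$, separated and locally of finite type over $S$, with $\mct(Y)\simeq(U_1,\mf{X}_1,j_1)$. Full faithfulness of $\mct$ then yields a unique $f\colon Y\to X$ with $\mct(f)\simeq g$, and Proposition \ref{prop:triples-morphisms-properties} applied to the properties finite and \'etale shows that $f$ is finite \'etale precisely because $g=\mct(f)$ is. Thus $Y\in\FEt(X)$ realizes the given cover, finishing the proof.

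The step I expect to require the most care is the bookkeeping around the separatedness built into the definition of $\FEt(\mct(X))$, which is exactly what makes Theorem \ref{thm:main} applicable and is \emph{not} a vacuous point: a finite \'etale morphism of triples need not keep the source separated. For instance, over $(\Spec\mcO,V(\pi))$ the triple $(\Spec K,\Spf\mcO_{L'},\beta^\mr{rig})$ attached to an unramified quadratic extension $L'/K$ maps finite \'etale to the final triple $\mct(\Spec\mcO)$, yet its $j_1=\beta^\mr{rig}\colon \Spa(L')\to\Spa(K)$ has degree two and so is not an open embedding, and this triple is not of the form $\mct(Y)$. Restricting to separated gluing triples (equivalently, demanding that $j_1$ be an open embedding) discards exactly such covers, and is the hypothesis under which the two cited results combine into the asserted equivalence. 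I would also flag explicitly that essential surjectivity invokes the $G$-space assumption through Theorem \ref{thm:main}, whereas well-definedness and full faithfulness need only the type (N)/(V) hypothesis.
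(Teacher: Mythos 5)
Your argument is correct and is essentially the paper's own: the paper offers no proof beyond the remark that the proposition ``is an immediate corollary of Theorem \ref{thm:main} together with Proposition \ref{prop:triples-morphisms-properties}'', which is exactly the combination you spell out (gluing/full faithfulness from Theorem \ref{thm:main}, transfer of ``finite'' and ``\'etale'' from Proposition \ref{prop:triples-morphisms-properties}). Your insistence that the source objects of $\FEt(\mct(X))$ be \emph{separated} gluing triples, justified by the unramified quadratic extension example where $j_1$ has degree two, is a genuine and worthwhile clarification of a point the paper's definition of $\FEt(U,\mf{X},j)$ leaves implicit.
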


Proposition \ref{prop:FEt-gluing} implies a mixed characteristic analogue of the Riemann existence theorem in rigid geometry (see \cite{LutkebohmertRET}). Throughout the rest of this section, we fix the following data:
    \begin{itemize}
    \item $K$ a complete discretely valued field of characteristic $p\geqslant 0$ 
    \item $\mcO$ is the valuation ring of $K$,
    \item $k$ is the residue field of $K$,
    \item $\pi$ a uniformizer of $K$,
    \item $X$ is a scheme separated and locally of finite type over $\mcO$.
    \end{itemize}
\noindent We associate to $X$ an adic space over $\Spa(\mcO)$ built from the associated triple $\mct(X)$ in a natural way, and which \emph{differs} in the special fiber from the normal `analytification' of $X$
\begin{equation*}
    X^\mr{ad}= X\times_{\Spec(\mcO)}\Spa(\mcO),
\end{equation*}
as in \cite[Proposition 3.8]{HuberEC}.

\begin{defin} 
    We define the adic space $X^{\backslash\mr{ad}}$ over $\Spa(\mcO)$ by the following construction:
    \begin{equation*}
        X^{\backslash\mr{ad}}= (X^\circ)^\an\sqcup_{j_X,\wh{X}^\rig,\text{incl.}}\wh{X}^\mr{ad},
    \end{equation*}
    which is well-defined as $j_X$ and the natural inclusion $\wh{X}^\rig\to \wh{X}^\mr{ad}$ are open embeddings. 
\end{defin}

\begin{remark} 
Such spaces are closely related to recent work in the theory of Shimura varieties, e.g., see \cite{PappasRapoportI} as well as \cite{IKY1} and \cite{IKY2}. Indeed, if one considers the $v$-sheaf $(X^{\backslash\mr{ad}})^\lozenge$ associated to $X^{\backslash\mr{ad}}$ as in \cite[\S18.1]{ScholzeBerkeley} this agrees with the $v$-sheaf $X^{\backslash\lozenge}$ from \cite[Definition 2.1.9]{PappasRapoportI}. 
\end{remark}


\begin{example} 
If $X=\mathbb{A}^1_{\mcO}$, then the pullback of $X^{\backslash\mr{ad}}$ and $X^{\mr{ad}}$ to $\Spa(k)$ is $\Spa(k[x],k[x])$ and $\Spa(k[x],\mathbb{Z})$, respectively.
\end{example}

Let $C$ be an algebraically closed non-archimedean extension of $K$, and let $x\colon \Spec(\mcO_C)\to X$ be morphism over $\mcO_K$. As $\mcO_C$ has no finite \'etale covers, we have natural fiber functors 
\begin{equation*}
    F_x\colon \FEt(X)\to \cat{Set},\qquad F_{x^{\backslash\mr{ad}}}\colon \FEt(X^{\backslash\mr{ad}})\to\cat{Set}.
\end{equation*}
These give rise to the fundamental groups
\begin{equation*}
    \pi_1^\et(X,x)=\mr{Aut}(F_x),\qquad \pi_1^\et(X^{\backslash\mr{ad}},x^{\backslash\mr{ad}})=\mr{Aut}(F_{x^{\backslash\mr{ad}}}),
\end{equation*}
respectively. The notation $(-)^{\prime p}$ means we restrict to the subcategory of finite \'etale covers with prime-to-$p$ order or the associated quotients of their fundamental groups.



\begin{prop}\label{prop:integral-RET} 
    The morphism $(-)^{\backslash\mr{ad}}\colon \FEt(X)^{\prime p}\to \FEt(X^{\backslash\mr{ad}})^{\prime p}$ is an equivalence. Thus, the induced morphism
    \begin{equation*}
        \pi_1(X,x)^{\prime p}\to \pi_1^\et(X^{\backslash\mr{ad}},x^{\backslash\mr{ad}})^{\prime p}
    \end{equation*}
    is an isomorphism.
\end{prop}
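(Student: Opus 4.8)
The plan is to factor $(-)^{\backslash\mr{ad}}$ through the associated-triple equivalence of Proposition~\ref{prop:FEt-gluing} and then to analyse it one ``leg'' at a time. Since $S=\Spec(\mcO)$ and $S_0=V(\pi)$ we have $X^\circ=X_K$, and by construction $Y^{\backslash\mr{ad}}=(Y^\circ)^\mr{an}\sqcup_{\wh Y^\mr{rig}}\wh Y^\mr{ad}$ is assembled from the triple $\mct(Y)=(Y^\circ,\wh Y,j_Y)$. Thus $(-)^{\backslash\mr{ad}}$ is the composite of $\mct\colon \FEt(X)\xrightarrow{\sim}\FEt(\mct(X))$ (Proposition~\ref{prop:FEt-gluing}) with the functor sending a finite \'etale cover of the triple to its glued adic cover. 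Because $\mct$ and its quasi-inverse preserve degrees, this first functor restricts to an equivalence on prime-to-$p$ parts, and I would reduce to the second functor.

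Next I would express both sides as $2$-fibre products. Unwinding the definition of a finite \'etale morphism of triples gives
\[
\FEt(\mct(X))\simeq \FEt(X^\circ)\times_{\FEt(\wh X^\mr{rig})}\FEt(\wh X),
\]
with transition functors ``analytify and restrict along $j_X$'' and $(-)^\mr{rig}$. On the other hand $\{(X^\circ)^\mr{an},\wh X^\mr{ad}\}$ is an open cover of $X^{\backslash\mr{ad}}$ with overlap $\wh X^\mr{rig}$, so descent for finite \'etale covers gives
\[
\FEt(X^{\backslash\mr{ad}})\simeq \FEt\big((X^\circ)^\mr{an}\big)\times_{\FEt(\wh X^\mr{rig})}\FEt(\wh X^\mr{ad}),
\]
the transition functors being restriction along the two open immersions. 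Over the common base $\FEt(\wh X^\mr{rig})$ the second functor is induced leg-wise by analytification and by $(-)^\mr{ad}$, compatibly with these transitions; hence it will suffice to prove that each leg is an equivalence on prime-to-$p$ parts (the prime-to-$p$ condition, being a condition on degrees, may be tested leg-wise).

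The special (formal) leg needs no tameness: writing $X_k=\wh X_0$, topological invariance of the finite \'etale site along the nilpotent thickenings $\wh X_0\hookrightarrow \wh X_n$ gives $\FEt(\wh X)\simeq\FEt(X_k)$, and unique Henselian lifting of finite \'etale algebras along $\mr{sp}_{\wh X}$ identifies $\FEt(\wh X^\mr{ad})$ with $\FEt(X_k)$ as well; both identifications are compatible with passage to the analytic locus $\wh X^\mr{rig}=(\wh X^\mr{ad})_\mr{a}$. The generic leg is the crux: it asks that analytification induce an equivalence $\FEt(X_K)^{\prime p}\xrightarrow{\sim}\FEt(X_K^\mr{an})^{\prime p}$. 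As $X_K$ need not be proper this is not formal GAGA --- analytic covers may carry wild ramification ``at the boundary'' that fails to algebraize, which is precisely what forces the prime-to-$p$ restriction --- and the input I would invoke is the rigid-analytic Riemann existence theorem of L\"utkebohmert \cite{LutkebohmertRET}. I expect correctly setting up and applying this theorem, especially full faithfulness in the non-proper case, to be the main obstacle.

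Granting the two leg-wise equivalences over the common base, the cospans differ only by equivalences at the two non-base vertices, so the induced functor on $2$-fibre products is an equivalence on prime-to-$p$ parts; together with Proposition~\ref{prop:FEt-gluing} this shows $(-)^{\backslash\mr{ad}}$ is an equivalence. For the fundamental-group statement, the geometric point $x\colon\Spec(\mcO_C)\to X$ induces $x^{\backslash\mr{ad}}\colon\Spa(\mcO_C)\to X^{\backslash\mr{ad}}$ with $F_{x^{\backslash\mr{ad}}}\circ(-)^{\backslash\mr{ad}}\cong F_x$ (both compute the fibre over the base point), so the equivalence of Galois categories yields the asserted isomorphism of automorphism groups $\pi_1^\et(X,x)^{\prime p}\xrightarrow{\sim}\pi_1^\et(X^{\backslash\mr{ad}},x^{\backslash\mr{ad}})^{\prime p}$.
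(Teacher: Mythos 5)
Your proposal follows essentially the same route as the paper: factor through the equivalence of Proposition~\ref{prop:FEt-gluing}, identify $\FEt(X^{\backslash\mr{ad}})$ with the $2$-fibre product $\FEt((X^\circ)^\mr{an})\times_{\FEt(\wh X^\mr{rig})}\FEt(\wh X^\mr{ad})$ via the open cover, and then check the two legs separately, with L\"utkebohmert's Riemann existence theorem handling the generic leg and the formal-versus-adic finite \'etale equivalence (reduced to the affine Henselian case in the paper) handling the special leg. The argument is correct and matches the paper's proof in structure and in all key inputs.
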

\begin{proof} We claim that there is a commutative diagram
\begin{equation*}
    \begin{tikzcd}[sep=2.25em]
	{\FEt(X)^{\prime p}} & {\FEt(X^{\backslash\mr{ad}})^{\prime p}} & {\FEt(X_K^\mr{an},\wh{X}^\mr{ad},j_X)^{\prime p}} \\
	&& {\FEt(X_K,\wh{X},j_X)^{\prime p},}
	\arrow[from=1-1, to=1-2]
	\arrow["\sim"', bend right=15, from=1-1, to=2-3]
	\arrow["\sim", from=1-2, to=1-3]
	\arrow["\wr"', from=2-3, to=1-3]
\end{tikzcd}
\end{equation*}
with the marked arrows being equivalences of categories. This implies the desired claim. The curved arrow is just the equivalence from Proposition \ref{prop:FEt-gluing}, and so it suffices to explain the horizontal and vertical marked arrows.

Here $\FEt(X_K^\mr{an},\wh{X}^\mr{ad},j_X)^{\prime p}$ means the category of triples $(\mathsf{X},\mathsf{Y},j)$ consisting of 
\begin{itemize} 
\item a prime-to-$p$ finite \'etale cover $\mathsf{X}\to X_K^\mr{an}$, 
\item a prime-to-$p$ finite \'etale cover $\mathsf{Y}\to \wh{X}^\mr{ad}$, 
\item and $j$ an isomorphism of their pullbacks to $\wh{X}^\mr{rig}$.
\end{itemize}
The horizontal marked arrow is then the obvious equivalence obtained from the fact that $X^{\backslash\mr{ad}}$ is the gluing of $X_K^\mr{an}$ and $\wh{X}^\mr{ad}$ along the open subset $\wh{X}^\mr{rig}$. 

The vertical map associates to $(U_1,\mf{X}_1,j_1)$ the triple $(U_1^\mr{an},\mf{X}_1^\mr{ad},j_1)$. That this is an equivalence follows from applying \cite[Theorem 4.1]{LutkebohmertRET} and the fact that the functor
\begin{equation*} (-)^\mr{ad}\colon \FEt(\wh{X})\to \FEt(\wh{X}^\mr{ad})
\end{equation*}
is an equivalence. For this latter claim we may reduce to the case when $X=\Spf(A)$. But, 
\begin{equation*} 
    \FEt(\Spf(A))\simeq \FEt(\Spec(A))\simeq \FEt(\Spa(A)),
\end{equation*}
explicitly sending $\Spec(B)\to \Spec(A)$ to $\Spf(B)\to\Spf(A)$ and $\Spa(B)\to\Spa(A)$ ($B$ receiving the $\varpi$-adic topology), respectively. The latter being an equivalence follows as in \cite[Example 1.6.6 ii)]{HuberEC}, and the former by combining \cite[Chapter I, Proposition 4.2.1]{FujiwaraKato} and \cite[Lemma A.13]{ALYSpecialization}.
\end{proof}

As a corollary of this we see that one can detect the algebraizability of a finite \'etale cover on the generic fiber, similar to Proposition \ref{prop:A-E-equiv}. Namely, fix a finite \'etale cover $\mf{Y}\to \wh{X}$. Set 
\begin{itemize} 
\item $\mc{A}(\mf{Y}\to \wh{X})$ to be the category of \emph{algebraizations} of $\mf{Y}$, i.e., pairs $(Y,\iota)$ where $Y$ is an algebraic space finite and \'etale over $S$ and $\iota\colon \mf{Y}\isomto \wh{Y}$ an isomorphism over $\wh{S}$,
\item $\mc{E}(\mf{Y}^\mr{rig}\to \wh{X}^\mr{rig})$ to be the category of pairs $(U,j)$ where $U$ is a finite \'etale cover of $X_K$ and $j$ is an \emph{isomorphism} of $\mf{Y}^\mr{rig}$ onto $U^\mr{an}|_{\mf{X}^\mr{rig}}\subseteq U^\mr{an}$ over $\mf{X}^\mr{rig}$.
\end{itemize}
The following proposition is an immediate corollary of Proposition \ref{prop:integral-RET}.

\begin{prop}\label{prop:FEt-algebraizability} The functor
    \begin{equation*}
        \mc{A}(\mf{Y}\to\mf{X})\to \mc{E}(\mf{Y}^\mr{rig}\to\mf{X}^\mr{rig}),\quad (Y,\iota)\mapsto (Y^\circ,j_Y\circ \iota^\mr{rig})
    \end{equation*}
    is an equivalence with quasi-inverse given by sending $(U,j)$ to $\mcg(U,\mf{Y},j)$. 
\end{prop}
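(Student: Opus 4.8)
The plan is to deduce the statement from the equivalence $\mct\colon\FEt(X)\xrightarrow{\sim}\FEt(\mct(X))$ of Proposition~\ref{prop:FEt-gluing} by restricting it to the objects whose formal part is the fixed cover $\mf{Y}$. Concretely, $\mc{A}(\mf{Y}\to\wh{X})$ is the category of finite étale $Y\to X$ equipped with an identification $\wh{Y}\simeq\mf{Y}$, and I would identify $\mc{E}(\mf{Y}^\mr{rig}\to\wh{X}^\mr{rig})$ with the category of finite étale morphisms of separated triples $(U,\mf{Y},j)\to\mct(X)$ whose formal part is the structure map $\mf{Y}\to\wh{X}$. Granting this identification, the two displayed functors are precisely the restrictions of $\mct$ and of its quasi-inverse $\mcg$ (Theorem~\ref{thm:main}, applicable since $S=\Spec(\mcO)$ is excellent and hence a $G$-space) to these subcategories, and they are mutually inverse because $\mct$ and $\mcg$ are.

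The crux is to check that the two functors are well defined, i.e.\ that they land in the asserted target categories; this is where essentially all the content sits. For the forward functor I must verify that for a finite étale $Y\to X$ the canonical map $j_Y$ identifies $\wh{Y}^\mr{rig}$ with the \emph{whole} open subspace $(Y_K)^\mr{an}\times_{(X_K)^\mr{an}}\wh{X}^\mr{rig}=(Y_K)^\mr{an}|_{\wh{X}^\mr{rig}}$, rather than merely an open part of it, so that $(Y_K,j_Y\circ\iota^\mr{rig})$ really is an object of $\mc{E}$. I would obtain this from the base-change compatibility of the $j$-construction: since $Y\to X$ is finite (hence proper and representable), completion and analytification commute with this base change, so by the functoriality of Proposition~\ref{prop:j-map} the square
\begin{equation*}
\begin{tikzcd}
\wh{Y}^\mr{rig}\ar[r,"j_Y"]\ar[d] & (Y_K)^\mr{an}\ar[d]\\
\wh{X}^\mr{rig}\ar[r,"j_X"] & (X_K)^\mr{an}
\end{tikzcd}
\end{equation*}
is cartesian, and $j_Y$ is an isomorphism onto the displayed pullback. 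This is the step I expect to be the main obstacle, as it requires unwinding the compatibility between the formal completion, the rigid locus, and the analytification under a finite base change, rather than following formally from the two cited theorems.

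For the reverse functor, given $(U,j)\in\mc{E}$ I would form the separated triple $(U,\mf{Y},j)$, reading $j$ as an open embedding $\mf{Y}^\mr{rig}\hookrightarrow U^\mr{an}$ via the open immersion $U^\mr{an}|_{\wh{X}^\mr{rig}}\hookrightarrow U^\mr{an}$ (the pullback of the open embedding $j_X$, which is open because $X\to S$ is separated, by Proposition~\ref{prop:j-map}). The projections to $\mct(X)$ make this a finite étale morphism of triples, so $\mcg(U,\mf{Y},j)$ is finite étale over $\mcg(\mct(X))\simeq X$ by Proposition~\ref{prop:triples-morphisms-properties}, and its completion is canonically identified with $\mf{Y}$; thus $\mcg(U,\mf{Y},j)$ lands in $\mc{A}(\mf{Y}\to\wh{X})$. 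Finally, the fact that these two well-defined functors are mutually quasi-inverse is inherited directly from the equivalence of Proposition~\ref{prop:FEt-gluing}, since they are the restrictions of $\mct$ and $\mcg$ and the unit and counit isomorphisms restrict, completing the argument.
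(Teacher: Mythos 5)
Your argument is correct and follows essentially the same route as the paper, which simply declares the proposition an immediate corollary: the real content is the restriction of the equivalence $\FEt(X)\simeq \FEt(\mct(X))$ of Proposition~\ref{prop:FEt-gluing} (itself a consequence of Theorem~\ref{thm:main} and Proposition~\ref{prop:triples-morphisms-properties}) to the fibre over the fixed formal datum $\mf{Y}\to\wh{X}$, which is exactly what you carry out. In particular, the one non-formal point you isolate --- that for finite \'etale $Y\to X$ the map $j_Y$ is an isomorphism onto all of $(Y^\circ)^{\mr{an}}\times_{(X^\circ)^{\mr{an}}}\wh{X}^{\mr{rig}}$, because completion and analytification commute with finite base change --- is precisely the compatibility the paper's "immediate" deduction implicitly relies on, and your justification of it is correct.
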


\begin{cor}\label{cor:algebraizability} A prime-to-$p$ finite \'etale cover $\mf{Y}\to \wh{X}$ is algebraizable if and only if the finite \'etale cover $\mf{Y}^\mr{rig}\to \wh{X}^\mr{rig}$ extends to a finite \'etale cover of $X_K^\mr{an}$.
\end{cor}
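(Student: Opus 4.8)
The plan is to read the corollary off Proposition~\ref{prop:FEt-algebraizability}, the only extra ingredient being the passage between \emph{algebraic} finite étale covers of $X_K$ and \emph{analytic} finite étale covers of $X_K^\mr{an}$. First I would rephrase both sides as non-emptiness statements. By definition $\mf{Y}\to\wh{X}$ is algebraizable precisely when the category $\mc{A}(\mf{Y}\to\wh{X})$ is non-empty, and Proposition~\ref{prop:FEt-algebraizability} supplies an equivalence $\mc{A}(\mf{Y}\to\wh{X})\simeq\mc{E}(\mf{Y}^\mr{rig}\to\wh{X}^\mr{rig})$. Unwinding the definition of $\mc{E}$, the right-hand side is non-empty exactly when there is an \emph{algebraic} finite étale cover $U\to X_K$ together with an isomorphism $\mf{Y}^\mr{rig}\isomto U^\mr{an}|_{\wh{X}^\mr{rig}}$ over $\wh{X}^\mr{rig}$. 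So the corollary reduces to matching this with the existence of an \emph{analytic} finite étale cover of $X_K^\mr{an}$ restricting to $\mf{Y}^\mr{rig}$.

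Next I would treat this algebraic-versus-analytic comparison. The implication producing an analytic extension from an algebraic $U$ is immediate, by taking $U^\mr{an}\to X_K^\mr{an}$. For the converse, given a finite étale cover $\mathsf{U}\to X_K^\mr{an}$ with $\mathsf{U}|_{\wh{X}^\mr{rig}}\simeq\mf{Y}^\mr{rig}$, I would apply the prime-to-$p$ Riemann existence theorem of Lütkebohmert \cite[Theorem~4.1]{LutkebohmertRET}—precisely the input underlying the equivalence $U\mapsto U^\mr{an}$ that forms the vertical arrow in the proof of Proposition~\ref{prop:integral-RET}—to algebraize $\mathsf{U}$ to some $U\to X_K$ with $U^\mr{an}\simeq\mathsf{U}$. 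Since $U\mapsto U^\mr{an}$ is compatible with restriction along the open embedding $j_X\colon\wh{X}^\mr{rig}\hookrightarrow X_K^\mr{an}$, the overlap isomorphism is preserved, producing the desired object of $\mc{E}$ and closing the loop.

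I expect the one genuine subtlety, and hence the main obstacle, to be the prime-to-$p$ bookkeeping in the reverse implication: the mixed-characteristic Riemann existence theorem holds only after restricting to prime-to-$p$ covers (wild covers of $X_K^\mr{an}$ need not algebraize), so \cite[Theorem~4.1]{LutkebohmertRET} applies only to a prime-to-$p$ extension $\mathsf{U}$. This matters because $j_X$ realizes $\wh{X}^\mr{rig}$ as an open subset of $X_K^\mr{an}$ that need not be dense (e.g.\ $\wh{X}^\mr{rig}$ is a closed unit disk inside $\mathbb{A}^{1,\mr{an}}$ when $X=\mathbb{A}^1_\mcO$), so restriction to $\wh{X}^\mr{rig}$ does not by itself force an arbitrary extension to be prime-to-$p$. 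Thus the extension in the statement must be read in the prime-to-$p$ sense consistent with the $(-)^{\prime p}$ framing of the whole subsection; equivalently, one should keep the entire comparison inside the prime-to-$p$ categories of triples, where the gluing equivalence $\FEt(X^{\backslash\mr{ad}})^{\prime p}\simeq\FEt(X_K^\mr{an},\wh{X}^\mr{ad},j_X)^{\prime p}$ from the proof of Proposition~\ref{prop:integral-RET} guarantees that the glued cover and its analytic component are automatically prime-to-$p$. Granting this, the corollary is indeed immediate from Proposition~\ref{prop:integral-RET} (via Proposition~\ref{prop:FEt-algebraizability} and Theorem~\ref{thm:main}).
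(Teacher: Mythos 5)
Your proposal is correct and follows the same route the paper intends: the corollary is read off from Proposition~\ref{prop:FEt-algebraizability} (non-emptiness of $\mc{A}(\mf{Y}\to\wh{X})$ versus $\mc{E}(\mf{Y}^\mr{rig}\to\wh{X}^\mr{rig})$), with the passage between algebraic covers of $X_K$ and analytic covers of $X_K^\mr{an}$ supplied by the prime-to-$p$ Riemann existence theorem of L\"utkebohmert already invoked in the proof of Proposition~\ref{prop:integral-RET}. Your remark that the extension to $X_K^\mr{an}$ must be read in the prime-to-$p$ sense (since $\wh{X}^\mr{rig}$ need not be dense in $X_K^\mr{an}$, restriction alone does not force an extension to be prime-to-$p$) is a fair point that the paper leaves implicit, and your resolution is the intended one.
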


\begin{remark}\label{rem:Grothendieck-specialization} 
Part of Grothendieck's construction of a specialization morphism for the \'etale fundamental group is the following result. If $X$ is a proper scheme over $\mcO$ then every finite \'etale cover $Y_k\to X_k$ deforms uniquely to a finite \'etale cover $Y\to X$. It is instructive to think about this result in two steps:
\begin{enumerate}[leftmargin=2cm, itemsep=3pt]
    \item[\textbf{Step 1:}] a finite \'etale cover $Y_k\to X_k$ deforms uniquely to a finite \'etale cover  $\mf{Y}\to \wh{X}$,
    \item[\textbf{Step 2:}] a finite \'etale cover $\mf{Y}\to\wh{X}$ is uniquely algebraizable to a finite \'etale cover $Y\to X$.
\end{enumerate}
It is \textbf{Step 2} that requires $X$ to be proper over $\mcO$ as it utilizes Grothendieck's existence theorem, and \textbf{Step 1} is true for any $X$. As $\wh{X}^{\mr{rig}}=(X^\circ)^\mr{an}$ when $X$ is proper over $\mcO$, one may view Proposition \ref{prop:FEt-algebraizability} and Corollary \ref{cor:algebraizability} as precisely qualifying the failure for \textbf{Step 2} to hold in the non-proper case, where generally $\wh{X}^\mr{rig}\subsetneq (X^\circ)^\mr{an}$, and providing the precise extra information required to mend it.
\end{remark}

\section{Coherent sheaves and gluing triples}
\label{s:coh-gluing}

In this section we show an equivalence of categories between coherent sheaves on an algebraic space $X$ and on its associated triple $\mct(X)$,  which is important in our proof of Theorem \ref{thm:main}. 

\subsection{Gluing for modules over an affine triple}
\label{ss:coh-gluing-affine} We first recall the gluing of results of Artin (see \cite[Theorem 2.6]{ArtinII}) in the Noetherian case and Beauville--Laszlo (see \cite{BeauvilleLaszlo}) in the principal case, which can be uniformly extended to the type (V) situation thanks to the methods in \stacks[Lemma]{0FGM}. This accounts roughly for our desired gluing of coherent sheaves in the affine case.

Throughout we fix an affine scheme $X=\Spec(A)$ of finite type over $S$ and set
\begin{equation*}
    I=\mc{I}(\Spec(A)),\qquad X^\circ=\Spec(A)\mysetminus V(I),\qquad \wh{X}^\circ= \Spec(\wh{A})\mysetminus V(I\wh{A}),
\end{equation*}
the first an ideal of definition of $A$ and the latter two schemes over $S^\circ$. 

The following lemma summarizes the ring-theoretic properties of $A$.

\begin{lemma}\label{lem:completion-properties} 
    If $(\Spec(A),V(I))$ is of Type (N)/(V), then the following properties hold.
    \begin{enumerate}
        \item The pair $(A,I)$ is universally pseudo-adhesive, and the ring $A$ is coherent,\footnote{Recall (e.g., see \cite[Chapter 0, Definitions 8.5.1 and 8.5.4]{FujiwaraKato}), that a ring/ideal pair $(A,I)$ is called \emph{universally pseudo-adhesive} if for any finite type $A$-algebra $R$ the scheme $\Spec(R)\mysetminus V(IR)$ is Noetherian, and for any finitely generated $R$-module $M$ the module $M[I^\infty]$ is bounded, i.e., there exists some $n\geqslant 0$ such that $I^n M[I^\infty]=0$. Moreover, $A$ is called \emph{coherent} if every finitely generated ideal is finitely presented, in which case the categories of coherent $A$-modules and finitely presented $A$-modules coincide.}
        \item the map $X^\circ\sqcup \Spec(\wh{A})\to X$ is faithfully flat,
        \item for any finite $A$-module $M$ the natural map $M\otimes_A \wh{A}\to \wh{M}$ is an isomorphism,
        \item for any finite $A$-module $M$, the natural map $M[I^\infty]\to \wh{M}[I^\infty]$ is a bijection.
    \end{enumerate}
\end{lemma}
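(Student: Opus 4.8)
The plan is to prove the four statements by reducing to the two defining cases, treating type (N) as essentially formal and loading all the real content into type (V). The unifying principle is that (1) is the foundational input and (2)--(4) are soft consequences of it, obtained from flatness of completion together with finiteness and boundedness of torsion; the methods of \stacks[Lemma]{0FGM} are exactly what permit the Noetherian-style manipulations in (2)--(4) to be transported uniformly to the type (V) setting.

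First I would dispose of the type (N) case, where $S$ is locally Noetherian and $A$, being of finite type over $S$, is Noetherian. Then all four claims are classical: $\Spec(R)\mysetminus V(IR)$ is an open subscheme of the Noetherian scheme $\Spec(R)$, and the $I$-power torsion of a finite module over a Noetherian ring is finitely generated, hence bounded, which gives (1) (coherence being automatic for Noetherian rings); the $I$-adic completion $\wh{A}$ is flat over $A$, which gives (2) once one records that $X^\circ\to X$ is an open immersion and that $\Spec(\wh{A})\to\Spec(A)$ meets $V(I)$ because $\wh{A}/I\wh{A}=A/I$; and (3), (4) are the standard comparison statements for completions of finite modules over Noetherian rings.

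The substance lies in the type (V) case, where $A$ is of finite type over a complete rank-one valuation ring $\mcO$ and is typically non-Noetherian. Here I would establish (1) by invoking the Fujiwara--Kato theory of adhesive rings: the pair $(\mcO,(\pi))$ is (topologically) universally pseudo-adhesive, this property is stable under passage to finite type $\mcO$-algebras, and so $(A,I)$ is universally pseudo-adhesive, with coherence of $A$ likewise a consequence of that theory. This is where I expect the main obstacle to be: it is precisely the non-Noetherian input the lemma is designed to package, and it cannot be avoided, since the torsion of a finite module is now only bounded rather than finitely generated, so the naive Noetherian arguments genuinely break down.

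Granting (1), the remaining parts follow formally. For (2), pseudo-adhesiveness forces $\wh{A}$ to be flat over $A$; combined with the open immersion $X^\circ\to X$ and the surjectivity observation above (using $\wh{A}/I\wh{A}=A/I$ so that $V(I)$ lies in the image of $\Spec(\wh{A})$), this yields faithful flatness. For (3), flatness of $\wh{A}$ over $A$ together with coherence (so that finite equals finitely presented) and exactness of $I$-adic completion on finite modules in the pseudo-adhesive setting identifies $M\otimes_A\wh{A}$ with $\wh{M}$. For (4), I would use the bounded torsion sequence $0\to M[I^\infty]\to M\to N\to 0$ with $N=M/M[I^\infty]$ torsion-free: since $M[I^\infty]$ is killed by some $I^n$ and $\wh{A}\otimes_A A/I^n=A/I^n$, one has $M[I^\infty]\otimes_A\wh{A}\cong M[I^\infty]$, and tensoring the sequence with the flat module $\wh{A}$ together with torsion-freeness of $\wh{N}=N\otimes_A\wh{A}$ exhibits $\wh{M}[I^\infty]$ as exactly the image of $M[I^\infty]$, giving the asserted bijection.
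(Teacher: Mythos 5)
Your proposal is correct and follows essentially the same route as the paper: part (1) is imported from the Fujiwara--Kato theory of (universally) pseudo-adhesive pairs (type (N) being classical), and (2)--(4) are then deduced from flatness of $A\to\wh{A}$ together with the torsion exact sequence $0\to M[I^\infty]\to M\to Q\to 0$. The only noteworthy differences are that in (4) you use boundedness of $M[I^\infty]$ and the identity $\wh{A}/I^n\wh{A}=A/I^n$ directly, where the paper first deduces finite generation of $M[I^\infty]$ via Raynaud--Gruson before comparing tensor product with completion, and that your parenthetical ``coherence, so that finite equals finitely presented'' is false in general (and genuinely fails in type (V), where $A/I^n$ need not be Noetherian) --- though this is harmless, since the base-change isomorphism in (3) for merely finite modules is exactly what the cited adhesive-pair theory supplies without any finite-presentation hypothesis.
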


\begin{proof} 
For the pseudo-adhesiveness claim in (1) see \cite[Chapter 0, Example 8.5.13]{FujiwaraKato} for type (N), and \cite[Theorem 2.16]{ZavyalovSheafiness} and/or \cite[Chapter 0, Theorem 9.2.1]{FujiwaraKato} for type (V), respectively. The coherence claim in (1) is clear in Type (N), and follows from \cite[Chapter 0, Corollary 9.2.9]{FujiwaraKato} in case of Type (V). Given (1), the claim in (3) follows from \cite[Chapter 0, Proposition 8.2.18]{FujiwaraKato}, and in fact this reference also implies (2). Indeed, evidently $X^\circ\sqcup \Spec(\wh{A})\to X$ is surjective and and $X^\circ\to X$ is flat, so it suffices to explain why $A\to\wh{A}$ is flat, but this follows from loc.\@ cit.


Finally, we prove (4). Observe that we have a short exact sequence
\begin{equation*}
    0\to M[I^\infty]\to M\to Q\to 0,
\end{equation*}
where $Q$ is a finitely generated and $I$-torsion free $A$-module. By work of Raynaud--Gruson (see \cite[\S7.4, Theorem 4]{BoschLectures}) $Q$ is then finitely presented, and so $M[I^\infty]$ is actually finitely generated (see \stacks[Lemma]{0519}). As $A\to\wh{A}$ is flat and $M\otimes_A \wh{A}\simeq \wh{M}$, we obtain an exact sequence
\begin{equation*}
    0\to M[I^\infty]\otimes_A\wh{A}\to \wh{M}\to Q\otimes_A\wh{A}\to 0.
\end{equation*}
As $M[I^\infty]$ and $Q$ are finitely generated $A$-modules, we have isomorphisms
\begin{equation}\label{eq:completion-and-tensor}
    M[I^\infty]\otimes_A\wh{A}\simeq {A[I^\infty]}^\wedge,\quad Q\otimes_A\wh{A}\simeq \wh{Q},
\end{equation}
where all completions are $I$-adic. But $M[I^\infty]$ is $I$-torsion and so $M[I^\infty]^\wedge=M[I^\infty]$. So, all in all, we have the exact sequence
\begin{equation*}
      0\to M[I^\infty]\to \wh{M}\to \wh{Q}\to 0.
\end{equation*}
So, it suffices to observe that $\wh{Q}$ is $I$-torsion free. By \eqref{eq:completion-and-tensor} this follows as $A\to \wh{A}$ is flat.
\end{proof}

In this affine setting, the category $\cat{Coh}(\Spec(A))$ of coherent sheaves on $\Spec(A)$ is replaced by the category $\cat{Mod}^\mr{fp}(A)$ of finitely-presented $A$-modules. The analogue for the gluing triple $\mct(A)$ is given by the following.

\begin{defin} 
    Denote by $\cat{Mod}^\mr{fp}(\mct(A))$ the category of triples $(\mcF,N,\iota)$ where 
    \begin{enumerate} 
        \item $\mcF$ is a coherent sheaf on $X^\circ$,
        \item $N$ is a finitely presented $\wh{A}$-module,
        \item $\iota\colon \mcF|_{\wh{X}^\circ}\isomto \wt{N}|_{\wh{X}^\circ}$ is an isomorphism of coherent sheaves on $\wh{X}^\circ$,
    \end{enumerate}
    with a morphism $(\mcF_1,N_1,\iota_1)\to (\mcF,N,\iota)$ a pair $\alpha\colon \mcF_1\to \mcF$ and $\beta\colon N_1\to N$ of morphisms such that $\iota\circ \alpha|_{\wh{X}^\circ}=\wt{\beta}|_{\wh{X}^\circ}\circ\iota_1$.
\end{defin}

We will often abuse notation concerning a triple $(\mcF,N,\iota)$ and let $\wh{\mcF}$ denote the common (via the identification $\iota$) coherent sheaf on $\wh{X}^\circ$.

\begin{prop}\label{prop:coh-gluing-affine-case} 
    The functor 
    \begin{equation*}
        \mct\colon \cat{Mod}^{\mr{fp}}(A)\to \cat{Mod}^\mr{fp}(\mct(A)),\qquad M\mapsto (\wt{M}|_{X^\circ},\wh{M},\mr{can.}),
    \end{equation*}
    is an equivalence of $A$-linear abelian $\otimes$-categories, with quasi-inverse given by the \emph{gluing functor}
    \begin{equation*}
        \mcg\colon \cat{Mod}^\mr{fp}(\mct(A))\to \cat{Mod}^\mr{fp}(\mct(A)),\quad (\mcF,N,\iota)\mapsto \mcF(X^\circ)\times_{\wh{\mcF}(\wh{X}^\circ)}N.
    \end{equation*}
\end{prop}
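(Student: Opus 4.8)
The statement is a form of Beauville--Laszlo / Artin gluing for finitely presented modules, and the geometric engine behind it is part (2) of Lemma~\ref{lem:completion-properties}: the morphism $p\colon X^\circ\sqcup\Spec(\wh A)\to X$ is faithfully flat. The plan is to exhibit $\mcg$ as a two-sided quasi-inverse to $\mct$ by checking that the unit and counit are isomorphisms, and then to deduce the abelian and monoidal refinements formally. That both sides are abelian uses coherence of $A$ and of $\wh A$ together with the fact that $X^\circ$ is Noetherian (all from part (1)), and the functor $\mct$ is visibly exact since restriction to $X^\circ$ is a localization and completion $M\mapsto \wh M=M\otimes_A\wh A$ is flat by part (2); hence an equivalence here will automatically be one of abelian categories.

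For the unit $\mr{id}\Rightarrow\mcg\circ\mct$ I must show that for finitely presented $M$ the natural map $M\to \Gamma(X^\circ,\wt M)\times_{\Gamma(\wh X^\circ,\wt M)}\wh M$ is an isomorphism, i.e.\ that the associated Mayer--Vietoris square is cartesian. I would prove this by comparing the local cohomology triangles $R\Gamma_I(M)\to M\to R\Gamma(X^\circ,\wt M)$ over $A$ and $R\Gamma_{I\wh A}(\wh M)\to \wh M\to R\Gamma(\wh X^\circ,\wt M)$ over $\wh A$. Since $\wh M=M\otimes_A\wh A$ by part (3) and local cohomology for the finitely generated ideal $I$ commutes with the flat base change $A\to\wh A$, the leftmost terms are identified after base change; as $R\Gamma_I(M)$ is $I$-power-torsion it is unchanged by $-\otimes_A\wh A$, so $R\Gamma_I(M)\to R\Gamma_{I\wh A}(\wh M)$ is an isomorphism (part (4) being its degree-zero shadow). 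Equality of the fibers of the two triangles forces the square of cofibers to be a homotopy pullback, and taking $H^0$ (there is no cohomology in negative degrees) yields exactly the claimed fiber-product description.

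The harder direction is the counit $\mct\circ\mcg\Rightarrow\mr{id}$, and it is the main obstacle. Starting from a triple $(\mcF,N,\iota)$ and its glued module $M=\mcF(X^\circ)\times_{\wh\mcF(\wh X^\circ)}N$, the key point is that the natural base-change maps $M\otimes_A\mcO_{X^\circ}\to\mcF$ and $M\otimes_A\wh A\to N$ are isomorphisms. Granting this, $p^\ast\wt M$ is finitely presented on the cover, so $\wt M$ is finitely presented by faithfully flat descent of finite presentation, and uniqueness of descent identifies $\mct(M)$ with $(\mcF,N,\iota)$ (the descent datum over $\Spec(\wh A\otimes_A\wh A)$ being reconstructed from $\iota$). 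Establishing $M\otimes_A\wh A\cong N$ is precisely the analytic heart of Beauville--Laszlo: in type (N) the ring $A$ is Noetherian and this is \cite[Theorem 2.6]{ArtinII}, while in type (V) the ideal $I=(\pi)$ is principal and $X^\circ,\wh X^\circ$ are affine, so it is the coherent Beauville--Laszlo gluing of \cite{BeauvilleLaszlo} extended by the methods of \stacks[Lemma]{0FGM}; the completion properties (3)--(4) are exactly what make these references applicable in our generality.

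Finally, the monoidal compatibility is formal: $\mct$ is symmetric monoidal because restriction to $X^\circ$ is and because $\widehat{M\otimes_A M'}\cong\wh M\otimes_{\wh A}\wh M'$ by part (3), so an equivalence of symmetric monoidal abelian categories follows immediately once the two displayed isomorphisms above are in hand.
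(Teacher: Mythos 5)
Your proposal is correct, and for the decisive step it ends up leaning on the same external inputs as the paper, but it packages the argument differently in a way worth comparing. The paper's proof is purely citation-driven: type (N) is Artin's \cite[Theorem 2.6]{ArtinII} wholesale, and type (V) is the enhanced Beauville--Laszlo theorem of \stacks[Theorem]{0BP2}, applicable because parts (3) and (4) of Lemma~\ref{lem:completion-properties} say exactly that every finitely generated $A$-module is glueable, plus descent of finite presentation along the faithfully flat $A\to\wh{A}\times A[\nicefrac{1}{\pi}]$ via \stacks[Lemma]{05B0}. You instead split unit from counit: your unit argument, comparing the triangles $R\Gamma_I(M)\to M\to R\Gamma(X^\circ,\wt{M})$ and $R\Gamma_{I\wh{A}}(\wh{M})\to\wh{M}\to R\Gamma(\wh{X}^\circ,\wt{\wh{M}})$ and using flat base change for local cohomology together with the fact that $I$-power-torsion complexes are insensitive to $-\otimes_A\wh{A}$, is a genuine alternative to the glueability formalism — it is a clean derived repackaging of conditions (3)--(4), and it buys a proof that does not reference the Stacks project's ``glueable'' machinery. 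For the counit, however, you defer to Artin and Beauville--Laszlo exactly as the paper does, so the essential surjectivity content is handled identically; your proof is therefore somewhat redundant (the cited equivalences already contain the unit statement). Two small points to tighten: first, the parenthetical about reconstructing a descent datum over $\Spec(\wh{A}\otimes_A\wh{A})$ from $\iota$ is a red herring — once you know $M\otimes_A\mcO_{X^\circ}\isomto\mcF$ and $M\otimes_A\wh{A}\isomto N$ compatibly with $\iota$, the identification $\mct(M)\simeq(\mcF,N,\iota)$ is immediate, and the only descent-theoretic input actually needed is descent of finite presentation along $p$ (this matters, since naive fppf descent along $p$ is precisely what Beauville--Laszlo does \emph{not} reduce to). Second, the claim that $R\Gamma_I(M)\otimes_A\wh{A}\simeq R\Gamma_I(M)$ silently uses $\wh{A}/I^n\wh{A}\simeq A/I^n$; this does follow from part (3) of Lemma~\ref{lem:completion-properties} applied to $A/I^n$, but in type (V), where $A$ is not Noetherian, it deserves a word.
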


\begin{proof}
In type (N), this has been established by Artin in \cite[Theorem 2.6]{ArtinII}. In order to cover type (V) as well, we aim to employ the enhanced version of the Beauville--Laszlo theorem in \stacks[Theorem]{0BP2}. Type (N) already covered, we may assume that the ideal of definition $I$ is principal, generated by an element $\pi$. Assertions (3) and (4) of Lemma~\ref{lem:completion-properties} imply that every finitely generated $A$-module is glueable (as defined in \stacks[Section]{0BNI}). It remains to show that an $A$-module $M$ is finitely presented if both $\widehat{M}$ and $M[\nicefrac{1}{\pi}]$ are. Since the map $A\to \widehat{A}\times A[\nicefrac{1}{\pi}]$ is faithfully flat by Lemma~\ref{lem:completion-properties}, we conclude by \stacks[Lemma]{05B0}.
\end{proof}

Utilizing the equivalence of coherent modules on $\wh{X}^\mr{rig}$ and $\wh{X}^\circ$ as in \cite[Chapter II, Proposition 6.6.5]{FujiwaraKato}, the following is an immediate corollary of Proposition~\ref{prop:coh-gluing-affine-case}. 


\begin{cor}\label{cor:A-reconstruction} 
The natural map
    \begin{equation}\label{eq:function-reconstruction}
        \mcO(X)\to \mcO(X^\circ)\times_{\mcO(\wh{X}^\mr{rig})}\mcO(\wh{X}),
    \end{equation}
    is an isomorphism of rings.
\end{cor}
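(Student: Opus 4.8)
The goal is to prove that the natural map in \eqref{eq:function-reconstruction}
\begin{equation*}
    \mcO(X)\to \mcO(X^\circ)\times_{\mcO(\wh{X}^\mr{rig})}\mcO(\wh{X})
\end{equation*}
is an isomorphism of rings, where $X = \Spec(A)$. The plan is to deduce this directly from Proposition~\ref{prop:coh-gluing-affine-case} by specializing the gluing equivalence to the structure sheaf, i.e., to the finitely presented module $M = A$ itself. Under the functor $\mct$, the $A$-module $A$ goes to the triple $(\wt{A}|_{X^\circ}, \wh{A}, \mr{can.}) = (\mcO_{X^\circ}, \wh{A}, \mr{can.})$, and since $\mcg$ is a quasi-inverse we obtain the ring isomorphism
\begin{equation*}
    A \isomto \mcg(\mcO_{X^\circ}, \wh{A}, \mr{can.}) = \mcO(X^\circ)\times_{\wh{\mcO}(\wh{X}^\circ)}\wh{A}.
\end{equation*}
Thus $\mcO(X) = A$ is identified with the fiber product $\mcO(X^\circ)\times_{\wh{\mcO}(\wh{X}^\circ)}\mcO(\wh{X})$, where the middle term is global sections over the scheme $\wh{X}^\circ = \Spec(\wh{A})\mysetminus V(I\wh{A})$ rather than over the rigid space $\wh{X}^\mr{rig}$.

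The remaining point, and the only substantive step, is to reconcile the two incarnations of the gluing term: the fiber product in Proposition~\ref{prop:coh-gluing-affine-case} is taken over $\wh{\mcF}(\wh{X}^\circ)$, sections over the \emph{scheme} $\wh{X}^\circ$, whereas the statement of Corollary~\ref{cor:A-reconstruction} uses $\mcO(\wh{X}^\mr{rig})$, sections over the \emph{rigid} space. Here I would invoke the equivalence between coherent modules on $\wh{X}^\mr{rig}$ and on $\wh{X}^\circ$ cited just before the corollary, namely \cite[Chapter II, Proposition 6.6.5]{FujiwaraKato}, which identifies the category of coherent sheaves on the rigid locus $\wh{X}^\mr{rig}$ with coherent sheaves on the scheme $\wh{X}^\circ = \Spec(\wh{A})[\nicefrac{1}{\pi}]$ (in the type (V) case; the Noetherian case is analogous). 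Applied to the structure sheaf this gives a compatible identification of global sections $\mcO(\wh{X}^\circ) \simeq \mcO(\wh{X}^\mr{rig})$ respecting the maps from both $\mcO(X^\circ)$ and $\mcO(\wh{X})$, so the two fiber products agree.

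The main obstacle---really the only place any care is needed---is verifying that the identification $\mcO(\wh{X}^\circ) \simeq \mcO(\wh{X}^\mr{rig})$ is compatible with the two restriction maps landing in it, so that it induces an isomorphism of fiber products and not merely of the individual vertices. Concretely, one checks that the restriction $\mcO(\wh{X}) \to \mcO(\wh{X}^\circ)$ composed with this identification equals the natural map $\mcO(\wh{X}) \to \mcO(\wh{X}^\mr{rig})$ coming from the specialization morphism of Proposition~\ref{prop:sp-omnibus}, and similarly for the map out of $\mcO(X^\circ)$ via $j_X$; both compatibilities are built into the functoriality of the equivalence in \cite[Chapter II, Proposition 6.6.5]{FujiwaraKato}. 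Once these squares commute, the universal property of fiber products yields the claimed ring isomorphism, completing the proof.
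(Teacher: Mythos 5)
Your proposal is correct and follows the paper's own route exactly: the paper derives the corollary as an immediate consequence of Proposition~\ref{prop:coh-gluing-affine-case} applied to the module $M=A$, combined with the equivalence of coherent modules on $\wh{X}^\mr{rig}$ and $\wh{X}^\circ$ from \cite[Chapter II, Proposition 6.6.5]{FujiwaraKato}. Your extra care about the compatibility of that identification with the two restriction maps is a reasonable elaboration of what the paper leaves implicit.
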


\subsection{Coherent sheaves and triples} 

We now patch together the affine-local results of \S\ref{ss:coh-gluing-affine}. This gives a globalization of the previously mentioned results of Artin and Beauville--Laszlo, which can only happen involving analytic geometry, and generalizes similar ideas in \cite{BBT}.

To set notation, we first recall the definition of coherent sheaves on algebraic spaces. In the following, $(-)_\Et$ and $(-)_\mr{top}$ denotes the big sites with \'etale and topological open covers, respectively, and $\mcO_\ms{X}$ is as in \eqref{eq:structure-sheaf-def}.

\begin{defin}\label{def:coh-on-alg-sp} Let $\ms{X}$ be a (formal) algebraic space or analytic adic algebraic space of type (N)/(V). The category $\cat{Coh}(\ms{X})$ of \emph{coherent sheaves on $\ms{X}$} is the full subcategory of $\cat{Mod}(\ms{X}_\Et,\mcO_\ms{X})$ consisting of finitely presented $\mcO_\ms{X}$-modules (in the sense of \stacks[Definition]{01BN}).
\end{defin}

Defining $\cat{Coh}^\mr{top}(\ms{X})$ using $(-)_\mr{top}$ instead of $(-)_\Et$, we have the following descent result

\begin{prop}\label{prop:coh-descent} Let $\ms{U}\to\ms{X}$ be an \'etale surjection with $\ms{U}$ representable. Then, the functor
\begin{equation*}
    \cat{Coh}(\ms{X})\to 2\text{-}\mr{Eq}\bigg(\cat{Coh}^\mr{top}(\ms{U})\mathrel{\substack{\textstyle\rightarrow\\[-0.6ex]
                      \textstyle\rightarrow}}  \cat{Coh}^\mr{top}(\ms{U}\times_\ms{X}\ms{U})\mathrel{\substack{\textstyle\rightarrow\\[-0.6ex]
                      \textstyle\rightarrow \\[-0.6ex]
                      \textstyle\rightarrow}}  \cat{Coh}^\mr{top}(\ms{U}\times_\ms{X}\ms{U}\times_\ms{X}\ms{U})\bigg),
\end{equation*}
is a natural equivalence. Moreover, the global sections functors
\begin{equation*}
 \cat{Coh}(\Spf(A))\to\cat{Mod}^\mr{fp}(A), \qquad \cat{Coh}(\Spa(A,A^+))\to\cat{Mod}^\mr{fp}(A)
\end{equation*}
are natural equivalences of $A$-linear $\otimes$-categories.
\end{prop}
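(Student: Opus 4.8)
The plan is to prove the two assertions separately, as they are genuinely different in character: the first is a descent statement for the étale-to-topological refinement of the coherent-sheaf category, and the second two are affine comparison statements.

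For the affine comparisons, I would proceed as follows. First I would establish the equivalence $\cat{Coh}(\Spf(A))\to\cat{Mod}^\mr{fp}(A)$. The functor is global sections, and its candidate quasi-inverse is $M\mapsto\wt{M}$, the coherent sheaf associated to a finitely presented module. The content is that on an affine formal scheme every finitely presented $\mcO$-module (in the sense of \stacks[Definition]{01BN}) arises from a finitely presented $A$-module and that the assignment is fully faithful. Under our standing hypotheses, $A$ is coherent and $(A,I)$ is universally pseudo-adhesive (Lemma~\ref{lem:completion-properties}(1)), so finitely presented $=$ coherent modules form a well-behaved abelian category; the key input is that completion is exact and compatible with tensor on finitely presented modules, which lets one check the sheaf condition and reconstruct a module from its localizations. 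The assertion for $\Spa(A,A^+)$ is analogous, using that $A$ is strongly rigid-Noetherian (our standing hypothesis on adic spaces) so that the structure presheaf is a sheaf and finitely presented modules glue; here I would cite or adapt the standard acyclicity results for coherent modules on affinoids (as in the references to \cite{FujiwaraKato} and \cite{ZavyalovSheafiness}). The $A$-linear and $\otimes$-compatibility are formal once the underlying equivalence is in place, since both are computed by global sections and tensor product of finitely presented modules commutes with the relevant localizations.

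For the descent statement, the idea is that $\cat{Coh}$ on $\ms{X}$ is \emph{defined} as finitely presented $\mcO_\ms{X}$-modules on the big étale site, so a coherent sheaf is the same data as a compatible system of coherent sheaves on each étale $\ms{U}$-object together with descent data along $\ms{U}\times_\ms{X}\ms{U}$. The only real content is the replacement of $\cat{Coh}$ by $\cat{Coh}^\mr{top}$ on the representable terms $\ms{U}$, $\ms{U}\times_\ms{X}\ms{U}$, $\ms{U}\times_\ms{X}\ms{U}\times_\ms{X}\ms{U}$: one must know that on a representable object coherent sheaves for the étale topology agree with those for the topological (Zariski/open) topology. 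I would deduce this from the affine comparison just proved together with the fact that finitely presented modules satisfy étale descent; equivalently, that the two sites compute the same category of finitely presented modules because both recover $\cat{Mod}^\mr{fp}(A)$ locally. The matching of the $2$-equaliser with $\cat{Coh}(\ms{X})$ is then the general principle that a sheaf on a big site is equivalent to descent data along a cover, applied to the finitely presented condition, which is étale-local and hence descends.

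The main obstacle I anticipate is the comparison between the étale and topological versions of $\cat{Coh}$ on the representable terms, i.e.\ proving that coherence and the sheaf property are insensitive to passing from open covers to étale covers in this formal/adic setting. In the scheme case this is classical, but here one must handle the formal and analytic-adic cases uniformly under the type (N)/(V) hypotheses, and in the adic case the sheafiness of $\mcO$ (guaranteed by the strongly rigid-Noetherian assumption) is exactly what makes the argument go through. Concretely, the crux is establishing vanishing of higher coherent cohomology on affines (so that descent along an étale cover is governed purely by the $0$-th and $1$-st terms), which reduces via the affine comparison to the corresponding acyclicity over $A$; I would expect to import this from \cite[Chapter II]{FujiwaraKato} and \cite{ZavyalovSheafiness} rather than reprove it.
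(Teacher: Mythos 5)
Your plan matches the paper's proof in structure and substance: the paper likewise treats the two assertions separately and simply cites the relevant literature — étale descent for coherent modules on formal schemes and on rigid/adic spaces (Fujiwara--Kato Ch.~I, Prop.~6.1.11 and Bosch--G\"ortz, Thm.~2.1) for the first claim, and the affine global-sections equivalences (Fujiwara--Kato Ch.~I, Thm.~3.2.8 and Kedlaya's notes, Thm.~1.4.18, using the coherence of $A$ from Lemma~\ref{lem:completion-properties}) for the second. Your identification of the key inputs (coherence of $A$, acyclicity on affines/affinoids, and the étale-versus-topological comparison on representables) is exactly what those citations supply.
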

\begin{proof} The first claim follows from \cite[Chapter I, Proposition 6.1.11]{FujiwaraKato} and \cite[Theorem 2.1]{BoschGortz} in the case of (formal) schemes and analytic adic space of type (N)/(V), respectively. The second pair of claimed equivalences follows, given the coherence of $A$ from Lemma \ref{lem:completion-properties}, follows from \cite[Chapter I, Theorem 3.2.8]{FujiwaraKato} and \cite[Theorem 1.4.18]{KedlayaAWS} for (formal) schemes and analytic adic spaces of type (N)/(V), respectively.
\end{proof}

Reducing to the affine situation and applying Proposition \ref{prop:coh-descent}, one deduces the following.

\begin{lemma} 
    For $\ms{X}$ as in Definition \ref{def:coh-on-alg-sp}, $\cat{Coh}(\ms{X})$ is an $\mcO_{\ms{X}}$-linear abelian $\otimes$-category.
\end{lemma}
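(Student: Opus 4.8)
The plan is to reduce to the affine case via Proposition~\ref{prop:coh-descent} and then descend the structure along an \'etale cover. All three properties (being $\mcO_\ms{X}$-linear, abelian, and a $\otimes$-category) are compatible with localization, so the whole statement is affine-local in nature once descent is set up correctly.

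First I would settle the affine case. By the second part of Proposition~\ref{prop:coh-descent}, global sections induce equivalences $\cat{Coh}(\Spf(A)) \simeq \cat{Mod}^\mr{fp}(A)$ and $\cat{Coh}(\Spa(A,A^+)) \simeq \cat{Mod}^\mr{fp}(A)$ of $A$-linear $\otimes$-categories. Since $A$ is coherent by Lemma~\ref{lem:completion-properties}(1), the category $\cat{Mod}^\mr{fp}(A)$ of finitely presented $A$-modules is closed under kernels and cokernels inside $\cat{Mod}(A)$ --- this is precisely the content of coherence --- and is therefore an $A$-linear abelian category. It is moreover closed under $\otimes_A$, as the tensor product of two finitely presented modules is finitely presented. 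Transporting along the two equivalences then settles the affine case.

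Next I would treat the representable case, where $\ms{X}=\ms{U}$ is a formal scheme or adic space of type (N)/(V). Covering $\ms{U}$ by affine opens $\Spf(A_i)$ (resp.\ $\Spa(A_i,A_i^+)$), the abelian and monoidal structure on $\cat{Coh}^\mr{top}(\ms{U})$ is built by computing kernels, cokernels, and tensor products affine-locally and gluing: restriction to an open is exact and symmetric monoidal, and the family of restrictions to the affine opens is conservative, so these operations are well-defined and satisfy the abelian $\otimes$ axioms by the previous paragraph. Thus $\cat{Coh}^\mr{top}(\ms{U})$ is an $\mcO_\ms{U}$-linear abelian $\otimes$-category, and the same applies to the representable fiber products $\ms{U}\times_\ms{X}\ms{U}$, etc. Finally, for general $\ms{X}$ I would apply the descent equivalence of Proposition~\ref{prop:coh-descent}, which identifies $\cat{Coh}(\ms{X})$ with the category of objects of $\cat{Coh}^\mr{top}(\ms{U})$ carrying descent data along an \'etale cover $\ms{U}\to\ms{X}$ with $\ms{U}$ representable. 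For a morphism of descent data I form its kernel and cokernel in $\cat{Coh}^\mr{top}(\ms{U})$ and equip them with induced descent data; the tensor product of descent data is defined using that pullback is monoidal. The forgetful functor to $\cat{Coh}^\mr{top}(\ms{U})$ is faithful, conservative, and preserves kernels, cokernels, and $\otimes$, so the abelian $\otimes$ axioms descend from the representable case, and $\mcO_\ms{X}$-linearity is automatic.

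I expect the one genuinely load-bearing point to be the exactness of pullback along the \'etale transition maps $\ms{U}\times_\ms{X}\ms{U}\rightrightarrows\ms{U}$ of the cover: because these maps are \'etale, hence flat, their pullback functors commute with the formation of kernels and cokernels, which is exactly what guarantees that the kernel and cokernel computed on $\ms{U}$ inherit descent data and therefore descend to objects of $\cat{Coh}(\ms{X})$. Everything else is formal once the affine case and this exactness are in hand.
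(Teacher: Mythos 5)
Your proof is correct and follows exactly the route the paper intends: the paper gives no written proof beyond the remark that one reduces to the affine situation and applies Proposition~\ref{prop:coh-descent}, and your argument is precisely a careful expansion of that one-liner (affine case via coherence of $A$ from Lemma~\ref{lem:completion-properties}, then \'etale descent, with flatness of the transition maps ensuring kernels and cokernels inherit descent data).
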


Observe that we have the following two natural functors.
\begin{enumerate}
    \item For a formal algebraic space $\mf{X}$ locally of finite type over $\wh{S}$ we have a functor
\begin{equation*}
    (-)^\mr{rig}\colon \cat{Coh}(\mf{X})\to \cat{Coh}(\mf{X}^\mr{rig}),\quad \mf{F}\mapsto \mf{F}^\mr{rig}=\mr{sp}_{\mf{X}}^\ast(\mf{F})\otimes_{\mcO_{\mf{X}^\mr{rig}}^+}\mcO_{\mf{X}^\mr{rig}}.
\end{equation*}
\item If $U$ is an algebraic space locally of finite type over $S^\circ$ there is a map of ringed topoi 
\begin{equation*}
    (-)^\mr{an}\colon (U^\mr{an}_\Et,\mcO_{U^\mr{an}})\to (U_\Et,\mcO_{U}),
\end{equation*}
which induces a natural functor
\begin{equation*}
    (-)^\mr{an}\colon \cat{Coh}(U)\to\cat{Coh}(U^\mr{an}).
\end{equation*}
\end{enumerate}

We are now prepared to define coherent sheaves on gluing triples over $(S,S_0)$.

\begin{defin}\label{def:coh-sheaves-on-triples} 
    Let $(U,\mf{X},j)$ be a gluing triple over $(S,S_0)$. Then, we define the category $\cat{Coh}(U,\mf{X},j)$ of \emph{coherent sheaves} to sit in the following $2$-cartesian diagram of categories: 
    \begin{equation*}
    \begin{tikzcd}
    	{\cat{Coh}(U,\mf{X},j)} & {\cat{Coh}(U)} \\
    	\\
    	{\cat{Coh}(\mf{X})} & {\cat{Coh}(\mf{X}^\mr{rig}),}
    	\arrow[from=1-1, to=1-2]
    	\arrow[from=1-1, to=3-1]
    	\arrow["\Bigg\lrcorner"{anchor=center, pos=0.125}, draw=none, from=1-1, to=3-2]
    	\arrow["{j^\ast\circ (-)^\mr{an}}", from=1-2, to=3-2]
    	\arrow["{(-)^\mr{rig}}"', from=3-1, to=3-2]
    \end{tikzcd}
    \end{equation*}
    i.e., the category of triples $(\mcF,\mf{F},\iota)$ where $\mcF$ is a coherent $\mcO_{U}$-module, $\mf{F}$ is a coherent $\mcO_{\mf{X}}$-module, and $\iota\colon \mf{F}^\mr{rig}\isomto j^\ast(\mcF^\mr{an})$ is an isomorphism of coherent $\mcO_{\mf{X}^\mr{rig}}$-modules.
\end{defin}

If $X$ is an algebraic space locally of finite type over $S$ there is a natural functor
\begin{equation*}
    \mct\colon \cat{Coh}(X)\to \cat{Coh}(\mct(X)),\quad F\mapsto \mct(F)=(F^\circ,\wh{F},\iota_F).
\end{equation*}
Here we denote by 
\begin{enumerate}[(i),itemsep=3pt] 
    \item $(-)^\circ$ the pullback functor along the morphism of schemes $X^\circ\to X$,
    \item $\wh{(-)}$ the pullback functor along the morphism of ringed topoi $\wh{(-)}\colon (\wh{X}_\Et,\mcO_{\wh{X}})\to (X_\Et,\mcO_X)$, 
    \item and $\iota_F\colon \wh{F}^\mr{rig}\isomto j_X^\ast((F^\circ)^\mr{an})$ is obtained as both are naturally the pullback of $F$ along the morphism of ringed topoi $(\wh{X}^\mr{rig}_\Et,\mcO_{\wh{X}^\mr{rig}})\to (X_\Et,\mcO_X)$. 
\end{enumerate}
The functor $\mct$ is clearly $2$-functorial in $X$.

\begin{prop}\label{prop:coh-gluing} 
    Let $X$ be a algebraic space separated and locally of finite type over $S$. Then,
    \begin{equation*}
        \mct\colon \cat{Coh}(X)\to \cat{Coh}(\mct(X)),
    \end{equation*}
    is an $\mcO_X$-linear $\otimes$-equivalence of abelian categories.
\end{prop}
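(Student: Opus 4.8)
The plan is to prove Proposition~\ref{prop:coh-gluing} by reducing to the affine case already settled in Proposition~\ref{prop:coh-gluing-affine-case}, using étale descent to glue the affine-local equivalences into a global one. The strategy mirrors the way one typically promotes an affine statement about quasi-coherent sheaves to algebraic spaces: establish the result Zariski/étale-locally, check that the construction is compatible with restriction and descent, and then invoke descent for coherent sheaves on both sides.

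Let me sketch how I would prove this.

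\medskip

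\textbf{Step 1 (Reduction to the affine case via descent).} First I would choose an étale cover $p\colon \bigsqcup_i \Spec(A_i) = W \to X$ by affine schemes, with each $A_i$ of finite type over $S$ (shrinking so that each $A_i$ maps to an affine chart of $S$, so that $(\Spec(A_i), V(I_i))$ is of Type~(N)/(V) and Lemma~\ref{lem:completion-properties} applies). The functor $\mct$ is compatible with étale pullback: applying $\mct$ to the Čech nerve $W^{\bullet} = W\times_X W\times_X \cdots$ yields, by Proposition~\ref{prop:coh-descent} (and the compatibility of $(-)^\circ$, $\wh{(-)}$, and $(-)^\mr{rig}$ with étale base change), a map of descent diagrams. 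The category $\cat{Coh}(X)$ is recovered as the $2$-limit of $\cat{Coh}^\mr{top}(W^\bullet)$ by Proposition~\ref{prop:coh-descent}. The key point is then to show that $\cat{Coh}(\mct(X))$ is \emph{also} the $2$-limit of the corresponding descent diagram of categories $\cat{Coh}(\mct(W^\bullet))$, so that $\mct$ is a map between two $2$-limits whose underlying affine-level functors are the equivalences $\mct\colon \cat{Coh}(A_i)\isomto \cat{Coh}(\mct(A_i))$ from Proposition~\ref{prop:coh-gluing-affine-case}. An equivalence of diagrams of categories induces an equivalence on $2$-limits, which gives the result.

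\medskip

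\textbf{Step 2 (Descent for coherent sheaves on a triple).} The content of the reduction is the claim that $\cat{Coh}(-)$ of a triple satisfies étale descent, i.e.\ that
\[
    \cat{Coh}(\mct(X)) \;\isomto\; 2\text{-}\lim_{\Delta}\, \cat{Coh}\bigl(\mct(W^\bullet)\bigr).
\]
Since $\cat{Coh}(U,\mf{X},j)$ is \emph{defined} as a $2$-fiber-product of $\cat{Coh}(U)$, $\cat{Coh}(\mf{X})$, and $\cat{Coh}(\mf{X}^\mr{rig})$ (Definition~\ref{def:coh-sheaves-on-triples}), and since $2$-limits commute with $2$-fiber-products, this reduces to descent for $\cat{Coh}$ on each of the three geometric objects $U = X^\circ$, $\mf{X} = \wh{X}$, and $\mf{X}^\mr{rig} = \wh{X}^\mr{rig}$ separately. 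For algebraic spaces over $S^\circ$ this is Proposition~\ref{prop:coh-descent}; for formal algebraic spaces and for rigid algebraic spaces it is again Proposition~\ref{prop:coh-descent} (applied to the formal and adic settings), using that $\{W_i\to X\}$ étale induces étale covers $\{W_i^\circ \to X^\circ\}$, $\{\wh{W}_i\to\wh{X}\}$, and $\{\wh{W}_i^\mr{rig}\to\wh{X}^\mr{rig}\}$ (the latter by Theorem~\ref{thm:map-properties-and-generic-fiber}(ix) and functoriality of $(-)^\mr{rig}$).

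\medskip

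\textbf{Step 3 (Affine base case and the algebraic structure).} At the affine level the equivalence is exactly Proposition~\ref{prop:coh-gluing-affine-case}, after identifying $\cat{Coh}(\wh{X}^\circ)\simeq \cat{Coh}(\wh{X}^\mr{rig})$ via \cite[Chapter~II, Proposition~6.6.5]{FujiwaraKato} (so that the triple appearing there matches Definition~\ref{def:coh-sheaves-on-triples}) and using Proposition~\ref{prop:coh-descent} to pass between modules and coherent sheaves. Finally, that the global $\mct$ is $\mcO_X$-linear, additive, exact, and symmetric monoidal follows because each of these structures is checked locally, is preserved by $(-)^\circ$, $\wh{(-)}$, and $(-)^\mr{rig}$, and holds at the affine level by Proposition~\ref{prop:coh-gluing-affine-case}; these structures are inherited by the $2$-limit.

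\medskip

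\textbf{The main obstacle} I anticipate is Step~2, specifically verifying that the $2$-fiber-product defining $\cat{Coh}$ of a triple is genuinely compatible with the three separate descent equivalences in a way that assembles into a single descent statement for $\cat{Coh}(\mct(X))$ --- in other words, the careful bookkeeping showing that forming the triple and forming the $2$-limit over the Čech nerve commute. This is largely formal $2$-categorical manipulation, but one must check that the comparison isomorphisms $\iota$ (the gluing data over $\mf{X}^\mr{rig}$) descend correctly and that no subtlety arises from the fact that $j^\ast\circ(-)^\mr{an}$ and $(-)^\mr{rig}$ interact nontrivially with étale base change on $X$. Once the compatibility of all four functors $(-)^\circ, \wh{(-)}, (-)^\mr{rig}, (-)^\mr{an}$ with étale localization on $X$ is recorded, the rest is a clean descent argument.
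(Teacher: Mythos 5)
Your proposal is correct and follows essentially the same route as the paper: étale descent for coherent sheaves on triples established component-wise from Proposition~\ref{prop:coh-descent}, reduction to the affine case $X=\Spec(A)$, identification of $\cat{Coh}(\wh{X}^\mr{rig})$ with $\cat{Coh}(\wh{X}^\circ)$ via \cite[Chapter~II, Proposition~6.6.5]{FujiwaraKato}, and then an appeal to Proposition~\ref{prop:coh-gluing-affine-case}. The paper packages your Step~2 as a standalone descent lemma for triples (Lemma~\ref{lem:etale-descent-for-triples}) and treats the compatibility bookkeeping you flag as immediate, but the substance is identical.
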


\begin{proof}
Observe that coherent sheaves on gluing triples satisfy \'etale descent as quickly follows from applying Proposition \ref{prop:coh-descent} component-wise. More precisely, the following holds.

\begin{lemma}\label{lem:etale-descent-for-triples} 
    Suppose that $(U_0,\mf{X}_0,j_0)\to (U,\mf{X},j)$ is an \'etale surjection of gluing triples over $(S,S_0)$. Write $(U_1,\mf{X}_1,j_1)$ and $(U_2,\mf{X}_2,j_2)$ for the self fiber product and self triple fiber product of $(U_0,\mf{X}_0,j_0)$ over $(U,\mf{X},j)$, respectively. Then, the natural functor
    \begin{equation*}
        \cat{Coh}(U,\mf{X},j)\to 2\text{-}\mr{Eq}\left(\cat{Coh}(U_0,\mf{X}_0,j_0)\mathrel{\substack{\textstyle\rightarrow\\[-0.6ex]
                          \textstyle\rightarrow}}  \cat{Coh}(U_1,\mf{X}_1,j_1)\mathrel{\substack{\textstyle\rightarrow\\[-0.6ex]
                          \textstyle\rightarrow \\[-0.6ex]
                          \textstyle\rightarrow}}  \cat{Coh}(U_2,\mf{X}_2,j_2)\right),
    \end{equation*}
    is an equivalence of categories.
\end{lemma}

Applying this and the first part of Proposition \ref{prop:coh-descent} to an \'etale cover $\{\Spec(A_i)\to X\}$ quickly reduces us to the case when $X=\Spec(A)$. Write $I=\mc{I}(\Spec(A))$, and recall from \S\ref{ss:coh-gluing-affine} that $\wh{X}^\circ$ is defined to be $\Spec(\wh{A})\mysetminus V(I\wh{A})$. There is a natural morphism of ringed spaces $(\wh{X}^\mr{rig},\mcO_{\wh{X}^\mr{rig}})\to (\wh{X}^\circ,\mcO_{\wh{X}^\circ})$ which induces a pullback $\mcO_X$-linear $\otimes$-equivalence of abelian categories of coherent sheaves (see \cite[Chapter II, Proposition 6.6.5]{FujiwaraKato}). Combining this and the second part of Proposition \ref{prop:coh-descent}, we see that the claim for $X=\Spec(A)$ is reduced precisely to Proposition~\ref{prop:coh-gluing-affine-case}, from where the claim follows.
\end{proof}

\begin{cor}\label{cor:vb-gluing} 
    Let $X$ be an algebraic space separated and locally of finite type over $S$. Then,
    \begin{equation*}
        \mct\colon \cat{Vect}(X)\to \cat{Vect}(\mct(X)),
    \end{equation*}
    is a Quillen-exact $\mcO_X$-linear $\otimes$-equivalence of categories with Quillen-exact quasi-inverse.
\end{cor}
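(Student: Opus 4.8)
The plan is to deduce this from Proposition \ref{prop:coh-gluing} by checking that the abelian-category equivalence $\mct\colon \cat{Coh}(X)\to\cat{Coh}(\mct(X))$ and its quasi-inverse $\mcg$ both carry vector bundles to vector bundles, after which Quillen-exactness comes essentially for free. First I would make the target precise: define $\cat{Vect}(\mct(X))$ to be the full subcategory of $\cat{Coh}(\mct(X))$ on those triples $(\mcF,\mf{F},\iota)$ with $\mcF\in\cat{Vect}(X^\circ)$ and $\mf{F}\in\cat{Vect}(\wh{X})$, so that automatically $\mf{F}^\mr{rig}$ and $j_X^\ast(\mcF^\mr{an})$ are vector bundles. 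The exact structure on each $\cat{Vect}$ is the one inherited from the ambient $\cat{Coh}$, whose admissible short exact sequences are precisely those short exact sequences all of whose terms are vector bundles.

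The easy inclusion is that $\mct$ sends $\cat{Vect}(X)$ into $\cat{Vect}(\mct(X))$: if $F$ is finite locally free then so are its pullbacks $F^\circ$ along the open immersion $X^\circ\to X$ and $\wh{F}$ along the completion $\wh{X}\to X$. The substance is the reverse inclusion, namely that $\mcg$ sends a vector-bundle triple to a vector bundle. Since being finite locally free is \'etale-local on $X$ and $\mct$ is compatible with \'etale localization (Lemma \ref{lem:etale-descent-for-triples}), I would reduce to the affine case $X=\Spec(A)$ treated in Proposition \ref{prop:coh-gluing-affine-case}. There, writing $F=\mcg(\mcF,N,\iota)$, the equivalence already guarantees that $F$ is a finitely presented $A$-module with $\wt{F}|_{X^\circ}\simeq\mcF$ and $\wh{F}\simeq N$. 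By hypothesis the pullback of $\wt{F}$ along the faithfully flat quasi-compact map $X^\circ\sqcup\Spec(\wh{A})\to X$ of Lemma \ref{lem:completion-properties}(2) is finite locally free, hence flat; as flatness descends along this morphism, $F$ is flat. A finitely presented flat module is finite projective, equivalently finite locally free, so $F$ is a vector bundle.

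With both inclusions in hand, $\mct$ restricts to an equivalence $\cat{Vect}(X)\isomto\cat{Vect}(\mct(X))$ with quasi-inverse $\mcg$, and it remains to verify Quillen-exactness. This is automatic: $\mct$ and $\mcg$ are exact functors between the ambient abelian categories of coherent sheaves (being mutually quasi-inverse equivalences), and the admissible exact sequences in either $\cat{Vect}$ are by definition detected by exactness in the surrounding $\cat{Coh}$; hence both functors preserve and reflect admissible short exact sequences. The $\mcO_X$-linear $\otimes$-structure is inherited verbatim from Proposition \ref{prop:coh-gluing}.

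The main obstacle is the descent step: recognizing that the gluing $\mcg(\mcF,\mf{F},\iota)$ of two vector bundles is again a vector bundle, rather than merely a coherent sheaf. Everything hinges on the faithful flatness of $X^\circ\sqcup\Spec(\wh{A})\to X$ together with the standard fact that a finitely presented flat module is projective; the coherence of $A$ from Lemma \ref{lem:completion-properties}(1) ensures that the category of finitely presented modules, in which $\mcg$ a priori lands, behaves well.
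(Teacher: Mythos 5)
Your proposal is correct and follows essentially the same route as the paper: reduce to the affine case and descend the vector-bundle property along the faithfully flat map $X^\circ\sqcup\Spec(\wh{A})\to X$ of Lemma \ref{lem:completion-properties}. The only (immaterial) difference is that you descend flatness and then invoke ``finitely presented $+$ flat $\Rightarrow$ projective,'' whereas the paper descends projectivity directly via \stacks[Theorem]{05A9}; you should also make explicit, as the paper does, the identification of vector bundles on $\Spf(\wh{A})$ with finite projective $\wh{A}$-modules when passing from $\mf{F}\in\cat{Vect}(\wh{X})$ to the pullback of $\wt{F}$ to $\Spec(\wh{A})$.
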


\begin{proof} 
Given Proposition \ref{prop:coh-gluing}, it suffices to show that a coherent $\mcO_X$-module $F$ is a vector bundle if and only if $\mct(F)$ is. But, we may quickly reduce to the case when $X=\Spec(A)$ is affine. Let us write $M=F(X)$. Note that $\wh{F}$ is a vector bundle if and only if $\wh{F}(\wh{X})=\wh{M}$ is a finite projective $\wh{A}$-module (e.g., see the proof of \cite[Proposition A.13]{IKY1}). As $\wh{M}=M\otimes_A \wh{A}$ by Lemma \ref{lem:completion-properties}, we are thus reduced to the following claim: $M$ is a finite projective $A$-module if and only if $\wh{M}|_{X^\circ}$ and $M\otimes_A \wh{A}$ is a vector bundle on $X^\circ$ and a finite projective $\wh{A}$-module, respectively. But, given Lemma \ref{lem:completion-properties} this follows from \stacks[Theorem]{05A9}.
\end{proof}

\section{Proofs}
\label{s:proof-of-gluing-theorem}

We now turn to the proof of Theorem \ref{thm:main}. For ease of reading, we have broken this proof down into steps, comprising the first three subsections. In the remaining two subsections we provide the only remaining missing proofs, that of Propositions \ref{prop:triples-morphisms-properties} and \ref{prop:affine-gluing-base-case}.

\subsection{Explicit reconstruction in the schematic case} 

We first observe a more down-to-earth case of Theorem \ref{thm:main}. Namely, we wish to show that if $X$ is a scheme locally of finite type over $S$ then one may explicitly reconstruct the ringed space $(X,\mcO_X)$ from the triple $\mct(X)$.

\begin{defin} 
    Let $(U,\mf{X},j)$ be gluing triple over $(S,S_0)$. The \emph{underlying topological space}, denoted $|(U,\mf{X},j)|$, has underlying set $|U|\sqcup |\mf{X}|$, where a subset $F = F_U\sqcup F_{\mf{X}}$ is closed if and only if $F_U = F\cap |U|$ is closed in $|U|$, $F_{\mf{X}} = F\cap|\mf{X}|$ is closed in $|\mf{X}|$, and \[ F^{\rm an}_U\cap \mf{X}^{\rm rig} \subseteq {\rm sp}_{\mf{X}}^{-1}(F_{\mf{X}}). \]
\end{defin}

In other words, $F$ is closed if and only if it is of the form $C\cup \mathrm{sp}_\mf{X}(C^\an\cap \mf{X}^\mr{rig})\cup D$, where $C$ is a closed subset of $U$ and $D$ a closed subset of $\mf{X}$. It is clear that the construction of the underlying topological space determines a functor
\begin{equation*}
    |\cdot|\colon \cat{Trip}_{(S,S_0)}\to \cat{Top},
\end{equation*}
and that there is a natural map of topological spaces $\alpha_X\colon |\mct(X)|\to |X|$.

\begin{prop}\label{prop:triple-scheme-same-space-different-day} 
    Let $X$ be an algebraic space locally of finite type over $S$. Then, the map 
    \begin{equation*} 
        \alpha_X\colon |\mct(X)|\to |X|
    \end{equation*}
    is a homeomorphism.
\end{prop}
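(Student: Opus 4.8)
\emph{Bijectivity.} On underlying sets $|\mct(X)| = |X^\circ|\sqcup|\wh X|$, and since $\wh X$ is the completion of $X$ along $X_0 = X\times_S S_0$ there is a canonical identification $|\wh X| = |X_0|$. As $X^\circ = X\mysetminus X_0$ we also have $|X| = |X^\circ|\sqcup|X_0|$, and under these identifications $\alpha_X$ is the identity on each summand; in particular it is a bijection. It therefore remains only to match the two topologies.

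\emph{Reduction to an affine scheme.} The constructions $\mct$, $|\cdot|$ and the map $\alpha$ are compatible with étale localisation on $X$: the functor $\mct$ preserves fibre products and sends an étale cover $X'\to X$ to an étale cover $\mct(X')\to\mct(X)$ of triples, and $|\cdot|$ carries the latter to an open quotient presentation of $|\mct(X)|$ by $|\mct(X')|$. Since a continuous bijection is a homeomorphism as soon as it is so after pulling back along an open surjection, and since the analogous statement holds Zariski-locally (for $W\subseteq X$ open, $|\mct(W)|\to|\mct(X)|$ is an open immersion onto $\alpha_X^{-1}(|W|)$), I would reduce to the case $X = \Spec(A)$ with $A$ of finite type over an affine base $S$.

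\emph{The specialization comparison, and continuity.} The one geometric input is a comparison of two maps $|\wh X^\mr{rig}|\to|X|$: the \emph{center} $\eta(\xi)$, the image of $\xi$ under $j_X$ followed by $(X^\circ)^\mr{an}\to X^\circ\hookrightarrow X$, which lands in $|X^\circ|$; and $\mr{sp}_{\wh X}(\xi)\in|\wh X| = |X_0|\subseteq|X|$. Representing $\xi$ by a valuation $\nu$ on $\wh A\ip$, the center is the prime $\{a\in A:\nu(a)=0\}$ while $\mr{sp}_{\wh X}(\xi)$ is $\{a\in A:\nu(a)<1\}$ (cf.\ \eqref{eq:sp-map-affine}); the evident containment of primes yields
\[
    \mr{sp}_{\wh X}(\xi)\in\overline{\{\eta(\xi)\}}\subseteq|X|.
\]
Continuity of $\alpha_X$ is now immediate: for $Z\subseteq|X|$ closed, with $Z_U = Z\cap|X^\circ|$ and $Z_0 = Z\cap|X_0|$, the set $\alpha_X^{-1}(Z) = Z_U\sqcup Z_0$ visibly satisfies the first two closedness conditions, and the third, $\mr{sp}_{\wh X}(j_X^{-1}(Z_U^\mr{an}))\subseteq Z_0$, holds because every such $\mr{sp}_{\wh X}(\xi)$ lies in $\overline{Z_U}^{|X|}\cap|X_0|\subseteq Z_0$.

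\emph{Closedness --- the main obstacle.} It remains to see that $\alpha_X$ is closed, which amounts to showing, for $F = F_U\sqcup F_{\mf X}$ closed in $|\mct(X)|$, that $\overline{F_U}^{|X|}\cap|X_0|\subseteq F_{\mf X}$ (the remaining conditions for $\alpha_X(F)$ being automatic). The closedness of $F$ gives $\mr{sp}_{\wh X}(j_X^{-1}(F_U^\mr{an}))\subseteq F_{\mf X}$, so it suffices to prove the reverse of the easy inclusion above,
\[
    \overline{F_U}^{|X|}\cap|X_0|\subseteq\mr{sp}_{\wh X}\big(j_X^{-1}(F_U^\mr{an})\big).
\]
Let $\bar X\subseteq X$ be the reduced closed subscheme with $|\bar X| = \overline{F_U}^{|X|}$. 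As $F_U\subseteq|X^\circ|$ is dense in $\bar X$ and $\mc I$ is invertible there, $\bar X$ is $\mc I$-torsion free, whence its completion $\widehat{\bar X}$ is admissible (completion is flat by Lemma~\ref{lem:completion-properties}, so preserves torsion-freeness). By Proposition~\ref{prop:sp-omnibus}(1) the specialization map $\mr{sp}_{\widehat{\bar X}}\colon|\widehat{\bar X}^\mr{rig}|\to|\widehat{\bar X}| = \overline{F_U}^{|X|}\cap|X_0|$ is then surjective, and since $\bar X^\circ = F_U$ every point of $|\widehat{\bar X}^\mr{rig}|$ has center in $F_U$. Transporting along the closed immersion $\bar X\hookrightarrow X$, which is compatible with $\mct$ and with $\mr{sp}$, exhibits each point of $\overline{F_U}^{|X|}\cap|X_0|$ as $\mr{sp}_{\wh X}(\xi)$ for some $\xi\in j_X^{-1}(F_U^\mr{an})$, giving the inclusion. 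The crux of the whole argument is precisely this last step --- the surjectivity of specialization for an \emph{admissible} formal model --- which is exactly what forces the passage to the closure $\bar X$ (so that $F_U$ becomes the entire rigid generic fibre) before one can invoke Proposition~\ref{prop:sp-omnibus}(1).
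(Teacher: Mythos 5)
Your proof is correct and follows essentially the same route as the paper: reduce to the affine scheme case by étale/Zariski descent of the topology, then establish closedness by identifying $\mr{sp}_{\wh X}(C^{\an}\cap\wh X^{\rig})$ with $\overline{C}\cap|X_0|$ via the surjectivity of the specialization map for the admissible completion of the closure of $C$ (admissibility coming from $\mc I$-torsion-freeness plus flatness of completion). The only cosmetic differences are that the paper works with the scheme-theoretic closure $V(J^c)$ rather than the reduced one (proving torsion-freeness via the injection $A/J^c\hookrightarrow\mcO(X^\circ)/J$) and first reduces to principal $\mc I$ by admissible blowups, whereas you invoke Proposition~\ref{prop:sp-omnibus}(1) directly and additionally spell out the continuity the paper takes for granted.
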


\begin{proof} 
As the map $\alpha_X$ is continuous and bijective, it suffices to show this map is closed. 

First assume that $X$ is a scheme. It suffices to prove that if $C$ is a closed subset of $X^\circ$, then $C\sqcup \mathrm{sp}_{\wh{X}}(C^\an\cap \wh{X}^\mr{rig})$ is a closed subset of $X$. As admissible blowups are closed maps and the specialization map is functorial, this quickly reduces to the case when $X=\Spec(A)$ where $\mc{I}(X)=I$ is principal.

Suppose that $C=V(J)$ for $J\subseteq \mcO(X^\circ)$ an ideal. We claim that $C\sqcup \mathrm{sp}_{\wh{X}}(C^\an\cap \wh{X}^\mr{rig})=V(J^c)$ where $J^c$ is the pullback of $J$ along the map $A\to \mcO(X^\circ)$. This reduces to showing that $\mathrm{sp}_{\wh{X}}(V(J^c)^\mathrm{an}\cap\wh{X}^\mr{rig})=V(I,J^c)$. Set $B=A/J^c$, then this is equivalent to the claim that $\mathrm{sp}_{\wh{X}}(\Spf(\wh{B})^\mr{rig})=\Spec(B/IB)$. By \cite[Chapter II, Proposition 3.1.5]{FujiwaraKato} it suffices to show that $\wh{B}$ is $I$-torsionfree. By Lemma \ref{lem:completion-properties}, $\Spec(\wh{B})\to\Spec(B)$ is flat, and so it suffices to show that $B$ is $I$-torsionfree. But, $B=A/J^c$ injects into $\mcO(X^\circ)/J$ which is $I$-torsionfree.

To show closedness in general, let $X_1\to X$ be an \'etale surjection with $X_1$ a scheme, so that $|X_1|\to |X|$ is a quotient map. By functoriality of the specialization map, the pullback of $C\sqcup \mathrm{sp}_{\wh{X}}(C^\an\cap \wh{X}^\mr{rig})$ under $|X_1|\to |X|$ is $C_1\sqcup \mathrm{sp}_{\wh{X}_1}(C_1^\an\cap \wh{X}_1^\mr{rig})$, where $C_1$ is the preimage of $C$ under $X_1^\circ\to X^\circ$. This set is closed by previous argument, and thus so is $C\sqcup \mathrm{sp}_{\wh{X}}(C^\an\cap \wh{X}^\mr{rig})$.
\end{proof}

Define the full subcategory $\mathscr{C}\subseteq \cat{Trip}_{(S,S_0)}$ to be the essential image under $\mct$ of representable objects, i.e., those $(U,\mf{X},j)$ isomorphic to $\mct(X)$ for some scheme $X$. Let us observe by Proposition \ref{prop:triple-scheme-same-space-different-day} one has a canonical identification $|(U,\mf{X},j)|\simeq |X|$. We now define a functor
\begin{equation*}
    \mcg\colon \mathscr{C}\to \cat{RS}_S,\quad \mcg(U,\mf{X},j)=(|(U,\mf{X},j)|,\mcO_{(U,\mf{X},j)}).
\end{equation*}
Here $\cat{RS}_S$ is the category of ringed spaces over $S$, and we define the sheaf $\mcO_{(U,\mf{X},j)}$ as follows: 
\begin{equation*}
    \mcO_{(U,\mf{X},j)}(|\mct(X_1)|)= \mcO_U(X_1)\times_{\mcO(\wh{X}_1^\mr{rig})}\mcO(\wh{X}_1),
\end{equation*}
for any open subset $X_1\subseteq X$, which is clearly a well-defined sheaf on $|(U,\mf{X},j)|\simeq |X|$ (as all objects involved only require the knowledge of the underlying space of $X_1$), independent of choices, and that $\mcg$ is a functor. 

\begin{prop}\label{prop:scheme-reconstruction} 
    For a scheme $X$ locally of finite type over $S$, there is a natural isomorphism of ringed spaces over $S$
    \begin{equation*}
        \alpha_X\colon (\mct(X),\mcO_{\mct(X)})\isomto (X,\mcO_X).
    \end{equation*}
\end{prop}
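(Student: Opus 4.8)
The plan is to assemble the statement from the two results already in hand: the topological identification of Proposition~\ref{prop:triple-scheme-same-space-different-day} and the affine structure-sheaf computation of Corollary~\ref{cor:A-reconstruction}. By Proposition~\ref{prop:triple-scheme-same-space-different-day} the map $\alpha_X\colon|\mct(X)|\to|X|$ is a homeomorphism, so it remains only to upgrade it to an isomorphism of ringed spaces over $S$ by producing the structure morphism $\alpha_X^\#\colon \mcO_X\to(\alpha_X)_\ast\mcO_{\mct(X)}$ and checking it is an isomorphism. For an open $X_1\subseteq X$, restriction along $X_1^\circ\hookrightarrow X_1$ gives $\mcO_X(X_1)\to\mcO_{X^\circ}(X_1^\circ)$ and the completion functor gives $\mcO_X(X_1)\to\mcO(\wh{X}_1)$; the commutativity encoded in the defining diagram of $j_X$ (Proposition~\ref{prop:j-map}, i.e.\ \eqref{eq:rig-to-scheme}) says these agree after mapping into $\mcO(\wh{X}_1^\mr{rig})$, so the universal property of the fiber product yields a natural map
\[
    \mcO_X(X_1)\longrightarrow \mcO_{X^\circ}(X_1^\circ)\times_{\mcO(\wh{X}_1^\mr{rig})}\mcO(\wh{X}_1)=\mcO_{\mct(X)}(|\mct(X_1)|).
\]
These maps are visibly compatible with restriction and are $\mcO_S$-algebra homomorphisms, so they assemble into the desired structure morphism $\alpha_X^\#$ over $S$.

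It then suffices to show $\alpha_X^\#$ is an isomorphism, and since both sides are sheaves this may be checked on a basis. As $X$ is a scheme, the affine opens $X_1=\Spec(A)$ with $A$ of finite type over $S$ form such a basis, and (since $\alpha_X$ is a homeomorphism) their images form a basis of $|\mct(X)|$. For each such $X_1$ the pair $(\Spec(A),V(I))$ is again of type (N)/(V)---a Noetherian ring stays Noetherian under finite type, and in type (V) this is exactly the affine situation of \S\ref{ss:coh-gluing-affine}---so Corollary~\ref{cor:A-reconstruction} applies and identifies the displayed map with its isomorphism. Hence $\alpha_X^\#$ is an isomorphism on a basis, therefore an isomorphism of sheaves, and combined with the homeomorphism of underlying spaces we conclude that $\alpha_X$ is an isomorphism of ringed spaces over $S$.

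Naturality in $X$ is formal: for a morphism $X\to X'$ of schemes locally of finite type over $S$, functoriality of $\mct$, of completion and the rigid-locus functor, and of the specialization map makes the relevant squares commute, so the $\alpha_X$ assemble into a natural isomorphism. I do not expect a substantial obstacle here; the only genuine content is already contained in Proposition~\ref{prop:triple-scheme-same-space-different-day} and Corollary~\ref{cor:A-reconstruction}, and the residual work is bookkeeping---confirming that the fiber-product formula defines a sheaf independent of choices (already recorded when $\mcg$ was introduced) and that the map constructed above is indeed the structure morphism of $\alpha_X$. The one place to stay careful is the basis argument, ensuring the comparison map is checked to be a \emph{sheaf} isomorphism (via affine opens) rather than merely an isomorphism on global sections.
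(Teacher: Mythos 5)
Your proposal is correct and follows essentially the same route as the paper: reduce to the structure sheaves via the homeomorphism of Proposition~\ref{prop:triple-scheme-same-space-different-day}, then verify the comparison map is an isomorphism on affine opens by invoking Corollary~\ref{cor:A-reconstruction}. The extra detail you supply (constructing $\alpha_X^\#$ via the universal property of the fiber product and checking on a basis) is exactly the bookkeeping the paper leaves implicit.
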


\begin{proof} 
By Proposition \ref{prop:triple-scheme-same-space-different-day}, $\alpha_X$ is a homeomorphism, so it suffices to show that the natural map $\mcO_X\to (\alpha_X)_\ast\mcO_{\mct(X)}$ is an isomorphism on affine open subsets. But, this is Corollary \ref{cor:A-reconstruction}.
\end{proof}

\begin{cor}\label{cor:fully-faithfulness-schemes} 
    The functor 
    \begin{equation*}
        \mct\colon \cat{Sch}^\mr{lft}_S\to \cat{Trip}_{(S,S_0)}
    \end{equation*}
    is fully faithful.
\end{cor}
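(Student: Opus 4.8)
The plan is to deduce full faithfulness from the two reconstruction results already in hand: Proposition~\ref{prop:scheme-reconstruction}, which recovers the ringed space $(X,\mcO_X)$ functorially from $\mct(X)$ via $\mcg$, and Corollary~\ref{cor:A-reconstruction}, its affine incarnation. Faithfulness is essentially formal: by Proposition~\ref{prop:scheme-reconstruction} the composite $\mcg\circ\mct$ is naturally isomorphic, through the $\alpha_X$, to the inclusion $\cat{Sch}^\mr{lft}_S\hookrightarrow\cat{RS}_S$, and this inclusion is faithful since a morphism of schemes is determined by its underlying morphism of ringed spaces. Hence $\mct$ is faithful, and all the content lies in fullness, i.e.\ in upgrading a morphism of triples to an honest morphism of schemes.

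For fullness I would first settle the affine case. Suppose $X=\Spec(A)$ and $Y=\Spec(B)$, and let $\phi=(\alpha,\beta)\colon\mct(X)\to\mct(Y)$ be a morphism of triples. Using the open immersion $Y^\circ\hookrightarrow\Spec(B)$, the component $\alpha$ gives rise to a ring map $B\to\mcO(X^\circ)$, while $\beta$ is a continuous ring map $\wh{B}\to\wh{A}$; the compatibility condition $\alpha^\mr{an}\circ j_X=j_Y\circ\beta^\mr{rig}$ says precisely that these two maps agree after composing into $\mcO(\wh{X}^\mr{rig})$. Corollary~\ref{cor:A-reconstruction} presents $A$ as the fibre product $\mcO(X^\circ)\times_{\mcO(\wh{X}^\mr{rig})}\wh{A}$, and similarly for $B$, so by the universal property of the fibre product the pair $(\alpha,\beta)$ is the same datum as a ring homomorphism $B\to A$, that is, a morphism $f\colon\Spec(A)\to\Spec(B)$ over $S$. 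Unwinding the construction shows $\mct(f)=\phi$, and the bijectivity of this correspondence establishes full faithfulness of $\mct$ on affine schemes.

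To globalize, I would invoke the topological reconstruction of Proposition~\ref{prop:triple-scheme-same-space-different-day}. A morphism $\phi\colon\mct(X)\to\mct(Y)$ induces, via the functor $|\cdot|$ and the homeomorphisms $\alpha_X,\alpha_Y$, a continuous map $|f|\colon|X|\to|Y|$ whose restrictions to $X^\circ$ and to the special fibre are the underlying maps of $\alpha$ and $\beta$. Given $x\in X$, I may choose an affine open $V\subseteq Y$ containing $|f|(x)$ and an affine open $U\subseteq X$ containing $x$ with $|f|(U)\subseteq V$; since the components of $\phi$ respect these opens, $\phi$ restricts to a morphism of triples $\mct(U)\to\mct(V)$ between affine schemes. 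The affine case produces a unique scheme morphism $f_U\colon U\to V$ with $\mct(f_U)=\phi|_U$, and uniqueness forces the $f_U$ to agree on overlaps, so they glue to a morphism $f\colon X\to Y$ over $S$ with $\mct(f)=\phi$, verified locally.

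The main obstacle is the passage from ringed spaces to locally ringed spaces: the functor $\mcg$ reconstructs $X$ only as a ringed space, and the inclusion $\cat{Sch}_S\hookrightarrow\cat{RS}_S$ is \emph{not} full, so fullness cannot be extracted by a purely formal diagram chase from Proposition~\ref{prop:scheme-reconstruction}. This is exactly why the argument must descend to the affine level, where Corollary~\ref{cor:A-reconstruction} and the universal property of the fibre product yield a genuine ring homomorphism, hence a morphism of locally ringed spaces, rather than merely a map of ringed spaces. The points demanding care are the faithful translation of the morphism-of-triples condition into the agreement of the two maps into $\mcO(\wh{X}^\mr{rig})$ required to apply the fibre product, and the verification that the affine-local morphisms $f_U$ are the restrictions of one continuous $|f|$, so that they glue unambiguously.
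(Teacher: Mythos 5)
Your proof is correct and follows essentially the same route as the paper, which states the corollary as an immediate consequence of Proposition~\ref{prop:scheme-reconstruction} (together with Corollary~\ref{cor:A-reconstruction}) without further argument. Your affine-local reduction via the fibre-product presentation of $A$ is exactly the right way to fill in the one non-formal point you identify, namely that fullness does not follow from reconstruction as a ringed space alone, since schemes do not form a full subcategory of $\cat{RS}_S$.
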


\subsection{Proof of Theorem \ref{thm:main}: fully faithfulness} 

We now aim to show that for any algebraic spaces  $X$ and $Y$ separated and locally of finite type over $S$, the natural map
\begin{equation}\label{eq:ff}
    \Hom(X,Y)\to \Hom(\mct(X),\mct(Y)),
\end{equation}
is a bijection. We do this in three steps.

\medskip

\paragraph{Step 0: when $X$ and $Y$ are both schemes} In this setting we see that the claim is a special case of Corollary \ref{cor:fully-faithfulness-schemes}.

\medskip

\paragraph{Step 1: when $X$ is a scheme} For injectivity, let $Y'\to Y$ be an \'etale surjection from a scheme $Y'$ separated and locally of finite type over $S$. If $f_i\colon X\to Y$ ($i=1,2$) are morphisms with $\mct(f_1)=\mct(f_2)$ then $\mct(f_1')=\mct(f_2')$, where $f_i'\colon Y'\times_Y X\to Y'$ are the induced morphisms. But, as $Y'\times_Y X$ is a scheme, this implies by Corollary \ref{cor:fully-faithfulness-schemes} that $f_1'=f_2'$ and, as $Y'\times_X Y\to X$ is a~surjection, that $f_1=f_2$.

For surjectivity, let $f\colon \mct(X)\to\mct(Y)$ be a morphism. As $X$ and $Y$ are separated the graph morphisms $\Gamma_{f^\circ}\colon X^\circ\to X^\circ\times_{S^\circ}Y^\circ$ and $\Gamma_{\wh{f}}\colon \wh{X}\to \wh{X}\times_{\wh{S}}\wh{Y}$ are closed embeddings, and so define ideal sheaves $\mc{I}_{f^\circ}\subseteq \mcO_{X^\circ\times_{S^\circ}Y^\circ}$ and $\mc{I}_{\wh{f}}\subseteq \mcO_{\wh{X}\times_{\wh{S}}\wh{Y}}$. As $f^\circ|_{\wh{X}^\mr{rig}}=\wh{f}|_{\wh{X}^\mr{rig}}$ it is clear that $\mc{I}_{f^\circ}$ and $\mc{I}_{\wh{f}}$ induce the same ideal sheaf of $\wh{X}^\mr{rig}\times_{\wh{S}^\mr{rig}}\wh{Y}^\mr{rig}$, so define an object $\mc{I}_f$ in $\cat{Coh}(\mct(X\times_S Y))$ in the sense of Definition \ref{def:coh-sheaves-on-triples}. By Proposition \ref{prop:coh-gluing} this corresponds to a unique coherent sheaf $\mc{J}$ (which is an ideal sheaf as an equivalence preserves subobjects) on $X\times_S Y$ with $\mct(\mc{J})=\mc{I}_f$. 

Consider $V(\mc{J})\subseteq X\times_S Y$. The projection map furnishes a morphism $V(\mc{J})\to X$ which is separated (as each $V(\mc{J})$ and $X$ is) and quasi-finite (as this may be checked over $X^\circ$ and $\wh{X}$, where it is an isomorphism). Thus, $V(\mc{J})$ is a scheme by \stacks[Proposition]{03XX}. Observe then that there is a natural isomorphism $\iota\colon \mct(X)\to \mct(V(\mc{J}))$ and thus by Corollary \ref{cor:fully-faithfulness-schemes} there exists a unique isomorphism $i\colon X\to V(\mc{J})$ such that $\mct(i)=\iota$. Composing $i$ with the projection $X\times_S Y\to Y$ produces a morphism $g\colon X\to Y$ which, by construction, satisfies $\mct(g)=f$, as desired. 

\medskip

\paragraph{Step 2: the general case}

By \cite[Expos\'e I, Corollaire 4.1.1]{SGA4-1} we may write $X=\varinjlim X_i$ in the category of presheaves on $S$-schemes where each $X_i$ is a scheme over $S$. In particular, we have an identification 
\begin{equation*}
    \Hom(X,Y)\simeq \varprojlim_{X_i\to X}\Hom(X_i,Y).
\end{equation*}
But, using \textbf{Step 1} one may directly check that one also has an identification
\begin{equation*}
    \Hom(\mct(X),\mct(Y))\simeq \varprojlim_{X_i\to X}\Hom(\mct(X_i),\mct(Y)).
\end{equation*}
The morphism from \eqref{eq:ff} is then obtained by passing to the limit over the morphisms 
\begin{equation*}
    \Hom(X_i,Y)\to \Hom(\mct(X_i),\mct(Y)),
\end{equation*}
which are each bijections by \textbf{Step 1}, and thus we are done.

\subsection{\texorpdfstring{Proof of the gluing theorem: essential surjectivity when $S$ is a $G$-space}{Proof of the gluing theorem: essential surjectivity in the G-space case}}
\label{ss:ess-surj-excellent-case} 

Our proof of essential surjectivity uses results relying on Artin approximation which requires excellence-like hypotheses. So, we now restrict to the case when $S$ is a $G$-space (see Definition \ref{def:G-space}).

\medskip

\subsubsection*{Artin's contraction theorem} 

We begin by recalling the main theorem of \cite{ArtinII}, and the improvement in \stacks[Theorem]{0GIB}. Our exposition will follow that of \stacks[Section]{0GH7}.

\begin{defin}\label{def:formal-contraction} 
    A \emph{formal contraction problem} over $S$ is a quadruple of data $(X',T',\mf{X},\mf{g})$ where
    \begin{itemize}
        \item $X'$ is an algebraic space locally of finite over $S$,
        \item $T'$ is a closed subset of $|X'|$,
        \item $\mf{X}$ is an formal algebraic space locally of finite type over $\wh{S}$,
        \item $\mf{g}\colon \wh{X}'_{T'}\to \mf{X}$ is a proper and rig-\'etale (see \stacks[Definition]{0AQM}) morphism of formal schemes such that both $\mf{g}$ and its diagonal are rig-surjective (see \stacks[Definition]{0AQQ}). Here $\wh{X}'_{T'}$ denotes the formal completion of $X'$ along $T'$. 
    \end{itemize}
\end{defin}

We will only need the following very special case of a formal contraction problem.

\begin{example} 
Let $X'$ be an algebraic space locally of finite type over $S$, and let $\mf{g}\colon \wh{X}'\to \wh{X}$ be a formal modification of formal algebraic spaces over $\wh{S}$. Then, $(X',|\wh{X}'|,\wh{X},\mf{g})$ is a formal contraction problem. Indeed, it suffices to show that $\mf{g}$ is rig-\'etale and that $\mf{g}$ and its diagonal are rig-surjective. But, by assumption, $\mf{g}^\mr{rig}$ is an isomorphism, and thus so is its diagonal. It is easy to see this implies that $\mf{g}$ and its diagonal are rig-surjective (cf.\@ \cite[Chapter II, Lemma 3.3.7]{FujiwaraKato}). To prove that $\mf{g}$ is rig-\'etale, we are reduced to the case of admissible blowups by combining \cite[Chapter II, Corollary 2.1.5]{FujiwaraKato} and part (4) of \stacks[Lemma]{0GCZ}, but this case is simple.
\end{example}

\begin{remark}
    If $S$ is Jacobson, then the converse holds: the conditions on the map $\mf{g}$ in Definition~\ref{def:formal-contraction} are equivalent to saying that the induced map of rigid spaces $\mf{g}^\rig$ is an isomorphism.
\end{remark}

\begin{defin}\label{def:Artin-contraction}
    Let $(X',T',\mf{X},\mf{g})$ be a formal contraction problem over $S$. Then, a \emph{potential solution} to this formal contraction problem is a quadruple $(X,T,g,\iota)$ where
    \begin{itemize}
        \item $X$ is an algebraic space locally of finite type over $S$,
        \item $g\colon X'\to X$ a proper morphism of algebraic spaces over $S$,
        \item $T\subseteq |X|$ a closed subset,
        \item $\iota\colon \wh{X}_{T}\isomto \mf{X}$ is an isomorphism of formal algebraic spaces over $\wh{S}$.
    \end{itemize}
    We say a potential solution $(X,T,g,\iota)$ is a \emph{solution} if 
    \begin{itemize}
        \item $T'=g^{-1}(T)$,
        \item $g\colon X'\mysetminus T'\to X\mysetminus T$ is an isomorphism,
        \item the completion of $g$ recovers, via $\iota$, the map $\mf{g}$.
    \end{itemize}
\end{defin}

In \cite{ArtinII} it is shown that every formal contraction problem has a solution when $S$ is excellent. This result was strengthened in \stacks[Section]{0GH7} to the case when $S$ is a $G$-space.

\begin{thm}[{Artin's contraction theorem, see \cite[Theorem 3.1]{ArtinII} and \stacks[Theorem]{0GIB}}] \label{thm:Artin-contraction} 
    Suppose that $S$ is a $G$-space. Then, every formal contraction problem over $S$ has a solution.
\end{thm}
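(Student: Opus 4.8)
Since Theorem~\ref{thm:Artin-contraction} is a recollection of Artin's contraction theorem, the plan is not to reprove it from scratch but to indicate the strategy by which it is established in \cite{ArtinII} and \stacks[Theorem]{0GIB}, and which I would follow. The overall idea is to construct the contraction $X$ as an algebraic space via \emph{Artin's representability criterion}: one realizes $X$ as the solution of a moduli-type problem over $S$ and algebraizes a formal/versal solution. First I would reduce to the local case. By the definition of a $G$-space (Definition~\ref{def:G-space}) and the étale-local nature of both the contraction datum and of the desired conclusion, it suffices to treat the problem étale-locally on $S$, i.e.\ to assume $S=\Spec(R)$ with $R$ a Noetherian $G$-ring.

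The candidate underlying object is dictated by the geometry: the topological space of $X$ should be $|X'\mysetminus T'|\sqcup|\mf{X}|$ (compare the construction of $|(U,\mf{X},j)|$ and Proposition~\ref{prop:triple-scheme-same-space-different-day}), away from $T$ the morphism $g$ is forced to be an isomorphism, and along $T$ the completion must be $\mf{X}$ via $\iota$. The content of the theorem is that this candidate is an honest algebraic space. I would verify the axioms of Artin's criterion for the functor (equivalently, the groupoid/stack) defined by $X$ on $S$-schemes: (i) it is an fppf sheaf and limit-preserving (locally of finite presentation); (ii) it admits a good deformation and obstruction theory controlled by coherent cohomology, where the properness of $\mf{g}$ together with the rig-étale and rig-surjective hypotheses in Definition~\ref{def:formal-contraction} is precisely what guarantees the relevant cohomology groups are coherent and that the contraction identifies only the formal neighborhood of $T'$; (iii) formal objects are effective; and (iv) openness of versality holds.

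Step (iii), effectivity, is where Grothendieck's existence theorem (formal GAGA) enters: a compatible system of infinitesimal thickenings is algebraized along the proper morphism $g\colon X'\to X$, which is exactly why $g$ must be proper. Step (iv), openness of versality, is where the $G$-ring hypothesis is indispensable, as it is the input to Artin approximation. Having checked the criterion, $X$ is representable by an algebraic space locally of finite type over $S$, and one then reads off from the construction that $T'=g^{-1}(T)$, that $g$ restricts to an isomorphism over $X\mysetminus T$, and that the completion of $g$ recovers $\mf{g}$, so that $(X,T,g,\iota)$ is a genuine solution in the sense of Definition~\ref{def:Artin-contraction}.

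The main obstacle, and the technical heart of \cite{ArtinII}, is step (ii): setting up a deformation--obstruction theory for the contraction and proving the requisite finiteness, while correctly accounting for the rig-étale and rig-surjective conditions so that the resulting $X$ is an algebraic space of finite type rather than a larger formal or analytic object. In the formulation I would adopt, much of this is absorbed into the axiomatic machinery of \stacks[Section]{0GH7}, so in practice the cleanest route is to match the present hypotheses to those of \stacks[Theorem]{0GIB} and invoke it directly.
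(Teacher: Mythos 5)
Your proposal is consistent with the paper, which offers no proof of this statement at all: it is stated as a recollection of an external result, with the excellent case attributed to \cite[Theorem 3.1]{ArtinII} and the strengthening to $G$-spaces to \stacks[Theorem]{0GIB}, exactly the sources you ultimately defer to. Your sketch of the underlying algebraization strategy (deformation theory, Grothendieck existence for the proper $g$, Artin approximation via the $G$-ring hypothesis) is a reasonable account of how those references proceed, but the paper itself simply invokes the citation, as you conclude one should.
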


\subsubsection*{Essential surjectivity in the $G$-scheme case} 

Assume now that $S$ is a $G$-space and let $(U,\mf{X},j)$ be a separated gluing triple over $(S,S_0)$. By Proposition \ref{prop:trip-descent}
 we may assume that $S$ is a separated scheme.

\medskip

\paragraph{Step 1: finite type case} Suppose first that $U\to S^\circ$ and $\mf{X}\to\wh{S}$ are finite type. By Nagata compactification for algebraic spaces (see \cite[Theorem 1.2]{CLO}), we may find a proper morphism of algebraic spaces $\ov{X}\to S$ and a dense open embedding $U\to \ov{X}^\circ$. As $\ov{X}\to S$ is proper, we know from Proposition \ref{prop:j-map} that $j_{\ov{X}}$ is an isomorphism. In particular, we have an open embedding of rigid spaces obtained as the composition
\begin{equation}\label{eq:compactification-open-emb}
    \mf{X}^\mr{rig}\xrightarrow{j}U^\mr{an}\to (\ov{X}^\circ)^\mr{an}\xrightarrow{j_{\ov{X}}^{-1}}\wh{\ov{X}}^\mr{rig}.
\end{equation}
Thus, there exist admissible modifications $\mf{g}\colon \mf{Y}\to \mf{X}$ and $\mf{Y}'\to \wh{\ov{X}}$ and an open embedding $\mf{Y}\to \mf{Y}'$ which recovers \eqref{eq:compactification-open-emb} on applying $(-)^\mr{rig}$. 

By \stacks[Theorem]{0GDU} there exists a unique $U$-modification $Y'\to \ov{X}$  whose completion recovers $\mf{Y}'\to \wh{\ov{X}}$. Let $|Y|\subseteq |Y'|$ be the union of 
\begin{equation*}
    |U|\subseteq |\ov{X}^\circ|=|(Y')^\circ|,\quad\text{and}\quad |\mf{Y}|\subseteq |\mf{Y}'|=|\wh{Y}'|\subseteq |Y'|.
\end{equation*}
Then using Proposition \ref{prop:triple-scheme-same-space-different-day} one sees that $|Y|$ is an open subset of $|Y'|$ and so corresponds to an open algebraic subspace $Y$ of $Y'$ (see \stacks[Lemma]{03BZ}). 

The quadruple $(Y,|\mf{Y}|,\mf{X}, g)$ is then a formal contraction problem in the sense of Definition \ref{def:formal-contraction}. As $S$ is a $G$-space we see from Theorem \ref{thm:Artin-contraction} that this formal contraction problem has a solution in the sense of Definition \ref{thm:Artin-contraction}. Write this solution as $(X,T,g,\iota)$. As $T$ is the image of $|\mf{Y}|$ it is clear that $T=|\mf{X}|$ and so $\mct(X)=(U,\mf{X},j)$ as desired.

\medskip

\paragraph{Step 2: general case} Take coherent open covers $\{U_i \}_{i\in\mc{I}}$ and $\{\mf{X}_j\}_{j\in\mc{J}}$ of $U$ and $\mf{X}$, respectively. For each $j$ in $\mc{J}$ one has that $\mf{X}_{j}^\mr{rig}$ is an open subset of $U^\mr{an}=\bigcup_i U_i^\mr{an}$, and so there exists some finite subset $S_{i(j)}\subseteq \mc{I}$ such that that $\mf{X}_{j}^\mr{rig}\subseteq W_{i(j)}^\mr{an}$ where $W_{i(j)}= \bigcup_{i\in S_{i(j)}}U_{i(j)}$. Then, for each $j$ in $\mc{J}$ and each pair $(j,j')$ in $\mc{J}$ the triples $(W_{i(j)}^\circ,\mf{X}_j,j)$ and $(W_{i(j)}\cap W_{i(j')},\mf{X}_j\cap \mf{X}_{j'},j)$ is a separated open gluing datum with finite type constituents. 

By \textbf{Step 1}, there exist algebraic spaces $X_j$ and $X_{j,j'}$ with
\begin{equation*} 
\mct(X_j)=(W_{i(j)},\mf{X}_j,j),\qquad \mct(X_{j,j'})=(W_{i(j)}\cap W_{i(j')},\mf{X}_j\cap \mf{X}_{j'},j),
\end{equation*}
respectively. Moreover, combining the fully faithfulness argument from \textbf{Step 2} and Proposition~\ref{prop:triples-morphisms-properties}, we obtain a separated open gluing datum of algebraic spaces. Taking the gluing of this datum results in an algebraic space $X$ separated and locally of finite type over $S$ which, by construction, satisfies $\mct(X)=(U,\mf{X},j)$.

\subsection{Proof of Proposition~\ref{prop:triples-morphisms-properties}}
\label{ss:proof-of-morphism-properties}

We recall the statement below for convenience.

\begin{prop*} 
    Let $f\colon Y\to X$ be a morphism of algebraic spaces separated and locally of finite type over $S$, and let $P$ be one of the following properties:
    \begin{multicols}{3}
    \begin{enumerate}[i)]
        \item quasi-compact,
        \item surjective,
        \item open embedding,
        \item closed embedding,
        \item (locally) quasi-finite,
        \item isomorphism,
        \item finite,
        \item separated,
        \item flat,
        \item \'etale,
        \item smooth.
    \end{enumerate}
    \end{multicols}
    \noindent Then, the map $f$ has property $P$ if and only if $\mct(f)$ does.
\end{prop*}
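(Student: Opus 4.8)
The plan is to prove both implications, with essentially all the difficulty in the converse. For the forward (``only if'') direction, if $f$ has property $P$ then $f^\circ = f\times_S S^\circ$ has $P$ because every $P$ in the list is stable under base change, and $\hat f\colon \hat Y\to\hat X$ has $P$ because formal completion along a closed subspace preserves each of these properties in the type (N)/(V) setting (cf.\ \cite{FujiwaraKato}, Chapter I); by Terminology~\ref{term:propertyP} this is exactly the assertion that $\mct(f)$ has $P$. So assume conversely that $f^\circ$ and $\hat f$ both have $P$, and that we must deduce $P$ for $f$. Since every $P$ listed is \'etale-local on the target, and since by Lemma~\ref{lem:etale-descent-for-triples} (together with the compatibility of $\mct$ with \'etale localization of the base) the hypothesis ``$\mct(f)$ has $P$'' is likewise \'etale-local on $X$, we may reduce to the case $X=\Spec(A)$ affine with $\mc{I}(X)=(I)$, so that $X^\circ=\Spec(A)\mysetminus V(I)$ and $\hat X=\Spf(\hat A)$.

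A few of the properties can be read off directly. Isomorphism follows from Theorem~\ref{thm:main}: a fully faithful functor reflects isomorphisms, so $f$ is an isomorphism as soon as $\mct(f)$ is. The purely topological properties (quasi-compact, surjective) are transparent via Proposition~\ref{prop:triple-scheme-same-space-different-day}, which identifies $|\mct(f)|$ with $|f|$; since $|X|=|X^\circ|\sqcup|X_0|$ with $|X_0|=|\hat X|$, surjectivity and quasi-compactness of $f$ follow from those of $f^\circ$ and $\hat f$. The two embedding properties (open and closed embedding) combine this homeomorphism of underlying spaces with the equivalence of coherent sheaves of Proposition~\ref{prop:coh-gluing}, applied to the structure/ideal sheaf of the would-be subspace exactly as in Step~1 of the proof of full faithfulness of $\mct$; here one uses that a closed embedding is finite, so that the relevant pushforward is coherent, and that $\mct$ is an exact equivalence reflecting surjections and subobjects.

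For the remaining properties (flat, \'etale, smooth, finite, (locally) quasi-finite, separated) the strategy is faithfully flat descent. By Lemma~\ref{lem:completion-properties}(2) the morphism $c\colon X^\circ\sqcup\Spec(\hat A)\to X$ is faithfully flat, and each such $P$ satisfies fpqc descent on the base; hence $f$ has $P$ if and only if $f^\circ$ has $P$ (true by hypothesis) and $f_{\hat A}\colon Y\times_X\Spec(\hat A)\to\Spec(\hat A)$ has $P$. By Lemma~\ref{lem:completion-properties}(3) completion commutes with the flat base change $\Spec(\hat A)\to\Spec(A)$, so $\widehat{f_{\hat A}}=\hat f$ and $(f_{\hat A})^\circ$ is a base change of $f^\circ$; the task thus reduces to recovering $P$ for the finite-type morphism $f_{\hat A}$ over the \emph{complete} base $\Spec(\hat A)$ from $P$ for its restriction over $\Spec(\hat A)^\circ$ and for its completion $\hat f$. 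This comparison between a morphism over the complete base and its formal completion is the main obstacle. The key point is that each of these $P$ may be checked at the points of the source: over $\Spec(\hat A)^\circ$ it is governed by $(f_{\hat A})^\circ$, while at points of the special fibre it is governed by $\hat f$, using the faithful flatness of completion of local rings and the pseudo-adhesive control provided by Lemma~\ref{lem:completion-properties}. It is precisely here that the type (N)/(V) hypotheses are essential: in type (V) one cannot argue Noetherian-locally and must instead invoke the adhesiveness theory of \cite[Chapter 0]{FujiwaraKato} and \cite{ZavyalovSheafiness}. Assembling the point-local conclusions yields $P$ for $f_{\hat A}$, and hence for $f$.
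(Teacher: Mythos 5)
Your forward direction, your treatment of the topological properties and of isomorphisms, and your flat/\'etale/smooth block are essentially sound and close in spirit to the paper's proof, which likewise reduces to the representable affine case and, for flatness, checks at the points of $V(IB)$ using the local flatness criterion together with the Zariskian and (APf)/adhesiveness hypotheses from Fujiwara--Kato, Chapter 0 (this is exactly the type (N)/(V) input you gesture at). The isomorphism case via full faithfulness of $\mct$ is a legitimate shortcut; the paper instead derives it as ``surjective open embedding''.

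There is, however, a genuine gap in your handling of \emph{finite} and \emph{separated}. You file both under the batch of properties to be recovered by fppf descent to $\Spec(\hat{A})$ followed by a check ``at the points of the source'', but neither finiteness nor separatedness is a pointwise condition on the source: an open immersion, for instance, is flat, quasi-finite and unramified at every point without being finite, and a point-by-point comparison with $(f_{\hat{A}})^\circ$ and $\hat{f}$ cannot exclude this. Worse, the descent step only reduces you to showing that $f_{\hat{A}}=f\times_A\hat{A}$ is finite, and you cannot identify $B\otimes_A\hat{A}$ with the $I$-adic completion $\hat{B}$ without already knowing that $B$ is a finite $A$-module --- part (3) of Lemma~\ref{lem:completion-properties} applies only to finite modules --- so the comparison with $\hat{f}$ is circular at exactly the crucial point. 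The paper proves the finite case by a different argument: write $Y^\circ=\underline{\Spec}(\mc{A}^\circ)$ and $\wh{Y}=\underline{\Spf}(\wh{\mc{A}})$ for coherent algebras, glue these via the coherent-sheaf equivalence of Proposition~\ref{prop:coh-gluing} into a coherent $\mcO_X$-algebra $\mc{A}$, and use full faithfulness (Corollary~\ref{cor:fully-faithfulness-schemes}) to identify $Y$ with $\underline{\Spec}(\mc{A})$ over $X$. You already invoke precisely this mechanism for closed embeddings, so the repair is to run the same coherent-algebra gluing for finite morphisms, and then to obtain separatedness by applying the closed-embedding case to the diagonal $\Delta_f$, as the paper does.
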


\begin{proof} 
As all these properties are stable/local for the \'etale topology, by considering $X'\times_X Y\to X'$ for an \'etale cover $X'\to X$ where $X'$ is a separated scheme locally of finite type over $S$, we may assume $X$ and $Y$ are representable.

For all of these properties $P$, it is evident if $f$ satisfies $P$ then $f^\circ\colon Y^\circ\to X^\circ$ and $f_n\colon Y_n\to X_n$ satisfies $P$ for all $n$. Thus, to verify that $\mct(f)$ satisfies $P$ it suffices to show that $\wh{f}\colon \wh{Y}\to \wh{X}$ satisfies $P$ if and only if $f_n$ satisfies $P$ for all $n$. But, in all cases this is true as is seen from
\begin{multicols}{3}
\begin{enumerate}[i)]
    \item obvious,
    \item obvious,
    \item Prop.\@ 4.4.2 + xi),
    \item Prop.\@ 4.3.6,
    \item obvious,
    \item obvious,
    \item Prop.\@ 4.2.3,
    \item Prop.\@ 4.6.9,
    \item Prop.\@ 4.8.1,
    \item Prop.\@ 5.3.11,
    \item Prop.\@ 5.3.18,
\end{enumerate}
\end{multicols}
\noindent where each reference is to a result in \cite[Chapter I]{FujiwaraKato}.

So, suppose now that $\mct(f)$ is $P$. The fact that $f$ is $P$ is obvious for i), ii), v). Let us then observe that every other claim follows easily once we establish vii) and ix). Namely, xi) and xii) easily follow once ix) is established from \stacks[Lemma]{01V9}. Moreover, once xi) is known, the case of iii) follows easily: if $\mct(f)$ is an open embedding, then $f$ is \'etale, and so by \stacks[Lemma]{02LC} it suffices to show that $f$ is universally injective. But, by \stacks[Lemma]{01S4} it suffices to show that $\Delta_f$ is surjective, which can clearly be checked on the level of $\mct(f)$. Then, from iii) and ii) we see that vi) follows as an isomorphism is a surjective open embedding. Now suppose that vii) is established. Then, to show iv) it suffices by \stacks[Lemma]{03BB} that $f$ is a monomorphism. But, this means showing that $\Delta_f$ is an isomorphism, which follows easily from vi). Finally, to show viii) we must show that $\Delta_f$ is a closed embedding, but this follows from iv).

To address case ix), we assume $Y=\Spec(B)$ and $X=\Spec(A)$ and show that $A\to B$ is flat. By assumption, $\Spec(B)\mysetminus V(IB)\to\Spec(A)\mysetminus V(IA)$ and $\wh{A}\to\wh{B}$ are flat (see \cite[Chapter I, Proposition 4.8.1]{FujiwaraKato}). Suppose that $\mf{n}$ is a point of $V(IB)$ and let $\mf{m}$ be its image in $V(IA)$. As $A_\mf{m}/I^nA_\mf{m}\to B_\mf{n}/I^nB_\mf{n}$ is flat, it suffices to verify that the conditions of \cite[Chapter 0, Corollary 8.3.9]{FujiwaraKato} are satisfied. Evidently the pair $(B_\mf{n},I)$ is Zariskian, and so it suffices to verify that the pairs $(A_\mf{m},IA_\mf{m})$ and $(B_\mf{n},IB_\mf{n})$ are \textbf{(APf)} as in \cite[Chapter 0, \S7.(c)]{FujiwaraKato}. For type (N) this follows from \cite[Chapter 0, Proposition 7.4.14]{FujiwaraKato}. For Type (V), it suffices by \cite[Chapter 0, Corollary 8.2.17]{FujiwaraKato} to show that $(B_\mf{n},IB_\mf{n})$ is pseudo-adhesive, but by \cite[Chapter 0, Proposition 8.5.7]{FujiwaraKato} it further suffices to show that $(B,IB)$ is pseudo-adhesive. But, $(\mcO,(\varpi))$ is universally pseudo-adhesive (see \cite[Theorem 2.16]{ZavyalovSheafiness} and \cite[Chapter 0, Theorem 9.2.1]{FujiwaraKato}) from where the claim follows.

Finally, to show case vii), i.e., finite, let us observe that we may write $Y^\circ=\underline{\Spec}(\mc{A}^\circ)$ for a~coherent $\mcO_{X^\circ}$-algebra $\mc{A}^\circ$, and similarly we may write $\wh{Y}=\underline{\Spf}(\wh{\mc{A}})$ for a coherent $\mcO_{\wh{X}}$-algebra $\wh{\mc{A}}$ (see \cite[Chapter I, Proposition 4.2.6]{FujiwaraKato}). By setup the pullback of $\mc{A}^\circ$ and $\wh{A}$ to $\wh{X}^\mr{rig}$ is isomorphic, and so by Proposition \ref{prop:coh-gluing} we may find a coherent $\mcO_X$-algebra $\mc{A}$ inducing $\mc{A}^\circ$ and $\wh{A}$. Then, $\underline{\Spec}(\mc{A})\to X$ is a finite morphism, and by construction $\mct(\underline{\Spec}(\mc{A}))$ is isomorphic to $Y$. By Corollary \ref{cor:fully-faithfulness-schemes}, $\underline{\Spec}(\mc{A})$ is isomorphic to $Y$ as an $X$-scheme, from where the claim follows.
\end{proof}

\subsection{Proof of Proposition \ref{prop:affine-gluing-base-case}}\label{ss:proof-of-fg}

We recall the statement below for convenience. 

\begin{prop*} 
     Let $R$ be a Noetherian ring and $(\pi)\subseteq R$ an ideal. We further let
\begin{itemize} 
\item $A$ be a finitely generated $R[\nicefrac{1}{\pi}]$-algebra, 
\item $B$ be a topologically finitely generated $\wh{R}$-algebra, 
\item and $j^*\colon A\to C=B[\nicefrac{1}{\pi}]$ be a map of $R[\nicefrac{1}{\pi}]$-algebras with dense image and for which the induced map $\Spa(C) \to \Spec(A)^{\rm an}$ is an open immersion. 
\end{itemize}
Then the $R$-algebra $D$ defined as the pull-back   
\begin{equation*}
    \begin{tikzcd}
	D & B \\
	A & C,
	\arrow[from=1-1, to=1-2]
	\arrow[from=1-1, to=2-1]
	\arrow["\lrcorner"{anchor=center, pos=0.125}, draw=none, from=1-1, to=2-2]
	\arrow[from=1-2, to=2-2]
	\arrow["j"', from=2-1, to=2-2]
\end{tikzcd}
\end{equation*}
is finitely generated, and satisfies $D[\nicefrac{1}{\pi}]=A$ and $\widehat{D} = B$.
\end{prop*}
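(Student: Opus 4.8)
The plan is to dispatch the three assertions $D[\nicefrac{1}{\pi}]=A$, $\widehat D=B$, and finite generation of $D$ in turn, the last being by far the most serious. Throughout I write $C=B[\nicefrac{1}{\pi}]$, let $\iota\colon B\to C$ and $j^*\colon A\to C$ be the structure maps, and let $p\colon D\to A$ and $q\colon D\to B$ be the two projections out of the fibre product. The identity $D[\nicefrac{1}{\pi}]=A$ is the easy one: localisation at $\pi$ is exact and so commutes with the finite limit defining $D$, giving $D[\nicefrac{1}{\pi}]=A[\nicefrac{1}{\pi}]\times_{C[\nicefrac{1}{\pi}]}B[\nicefrac{1}{\pi}]$; since $\pi$ is already a unit in $A$ and in $C=B[\nicefrac{1}{\pi}]$, this collapses to $A\times_C C=A$.

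For $\widehat D=B$ I would argue at the level of the two projections. Note $\ker q=\ker(j^*)\subseteq A$ is a $\pi$-divisible $A$-module (as $\pi$ is a unit in $A$), so multiplication by $\pi^n$ is bijective on it. Applying the snake lemma to multiplication by $\pi^n$ on $0\to\ker q\to D\xrightarrow{q} E\to 0$, where $E=\operatorname{im} q=\{b\in B:\iota(b)\in\operatorname{im} j^*\}$, yields $D/\pi^n D\isomto E/\pi^n E$ for all $n$, hence $\widehat D\cong\widehat E$. It then remains to identify $\widehat E$ with $B$. The density of $j^*$ gives $E+\pi^n B=B$ for all $n$ (given $b$, approximate $\iota(b)$ by some $j^*(a)$ modulo $\pi^n\iota(B)$ and read off an element of $E$ congruent to $b$ mod $\pi^nB$), so $E$ is $\pi$-adically dense; and since $\operatorname{im} j^*$ is an $R[\nicefrac{1}{\pi}]$-subalgebra of $C$, hence stable under multiplication by $\pi^{-1}$, one checks $E\cap\pi^n B=\pi^n E$. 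Together these give $E/\pi^n E\isomto B/\pi^n B$, so $\widehat E=\widehat B=B$, the last equality as $B$ is $\pi$-adically complete.

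The heart of the matter is finite generation. My plan is to produce a finitely generated $R$-subalgebra $D_0\subseteq D$ with $D_0[\nicefrac{1}{\pi}]=A$ and $\widehat{D_0}=B$, and then to conclude $D_0=D$ via Beauville--Laszlo applied to the ring $D_0$ itself. Concretely, I would choose $d_1,\dots,d_m\in D$ whose images generate $A$ over $R[\nicefrac{1}{\pi}]$ (clearing denominators using $D[\nicefrac{1}{\pi}]=A$); choose $f_1,\dots,f_k\in D$ whose images under $q$ reduce modulo $\pi$ to algebra generators of the finitely generated $R/\pi$-algebra $B/\pi B$ (possible since $E\twoheadrightarrow B/\pi B$ by density); and, using that the $\pi$-power torsion $D[\pi^\infty]\cong B[\pi^\infty]$ is finitely generated and bounded by Lemma~\ref{lem:completion-properties}, adjoin finitely many further elements accounting for it. Set $D_0=R[d_i,f_j,\dots]$. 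Then $D_0$ is of finite type over the Noetherian ring $R$, hence Noetherian, and $D_0[\nicefrac{1}{\pi}]=A$ by construction. Granting $\widehat{D_0}=B$ (so that $\widehat{D_0}[\nicefrac{1}{\pi}]=C$, compatibly with $D$), the Beauville--Laszlo theorem for the Noetherian ring $D_0$ and the element $\pi$ (see \stacks[Theorem]{0BP2}, whose glueability hypothesis is automatic over a Noetherian ring with bounded $\pi$-torsion) identifies $D_0$ with $D_0[\nicefrac{1}{\pi}]\times_{\widehat{D_0}[\nicefrac{1}{\pi}]}\widehat{D_0}=A\times_C B=D$; since this isomorphism is the inclusion $D_0\hookrightarrow D$, we obtain $D_0=D$, which is finitely generated.

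I expect the main obstacle to be the verification that $\widehat{D_0}=B$. Surjectivity of $\widehat{D_0}\to B$ is the soft direction: $D_0\to B$ is surjective modulo $\pi$ by the choice of the $f_j$, and a $\pi$-adically complete ring surjects onto $B$ once it does so modulo $\pi$. Injectivity is where I expect the genuine work, and where the remaining hypothesis — that $\Spa(C)\to\Spec(A)^{\mathrm{an}}$ is an open immersion — must enter. The idea is to compare, inside $\Spec(A)^{\mathrm{an}}=\Spec(D_0[\nicefrac{1}{\pi}])^{\mathrm{an}}$, the given open immersion $j$ with the canonical open immersion $j_{\Spec(D_0)}\colon \Spa(\widehat{D_0}[\nicefrac{1}{\pi}])=\Spf(\widehat{D_0})^{\mathrm{rig}}\hookrightarrow \Spec(A)^{\mathrm{an}}$ of Proposition~\ref{prop:j-map}: the surjection $\widehat{D_0}\twoheadrightarrow B$ exhibits $\Spa(C)$ as a closed adic subspace of $\Spa(\widehat{D_0}[\nicefrac{1}{\pi}])$ whose composite into $\Spec(A)^{\mathrm{an}}$ is open, and the point is to use this to rule out any spurious extra components of $\Spa(\widehat{D_0}[\nicefrac{1}{\pi}])$, forcing $\widehat{D_0}[\nicefrac{1}{\pi}]\isomto C$. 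Combined with surjectivity modulo $\pi$ and the boundedness of $\pi$-torsion from Lemma~\ref{lem:completion-properties}, this should upgrade $\widehat{D_0}\to B$ to an isomorphism. Making this comparison of generic fibres precise is the delicate step, and is presumably what accounts for the length of the argument.
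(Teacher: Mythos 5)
Your proofs of the two subsidiary identities are correct: $D[\nicefrac{1}{\pi}]=A$ via exactness of localization is cleaner than the paper's argument, and your snake-lemma-plus-density argument for $\widehat{D}=B$ is essentially the paper's \textbf{Step 4}. The Beauville--Laszlo endgame (identify $D_0$ with $D_0[\nicefrac{1}{\pi}]\times_{\widehat{D_0}[\nicefrac{1}{\pi}]}\widehat{D_0}$ and compare with $D$) would also be a legitimate alternative to the paper's \textbf{Step 2}. But the central step fails: the subalgebra $D_0$ you construct does \emph{not} satisfy $\widehat{D_0}=B$ in general, so the gap you flag at the end is not merely a ``delicate verification'' --- the statement you are trying to verify is false for your $D_0$, and the construction itself must be changed. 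Concretely, take $R=\mathbb{Z}_p$, $A=\mathbb{Q}_p[x]$, $B=\mathbb{Z}_p\langle x\rangle$, $C=\mathbb{Q}_p\langle x\rangle$ with $j^*$ the obvious dense inclusion (so $\Spa(C)$ is the closed unit disk in $\mathbb{A}^{1,\mathrm{an}}_{\mathbb{Q}_p}$ and $D=\mathbb{Z}_p[x]$). Your recipe permits $d_1=px$ (it lies in $D$ and generates $A$ over $\mathbb{Q}_p$) and $f_1=x-px^2$ (it lies in $D$ and reduces to the generator $x$ of $B/pB=\mathbb{F}_p[x]$), giving $D_0=\mathbb{Z}_p[px,\,x-px^2]$. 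Writing $\gamma=px$, $\epsilon=x-px^2$, one has $p\epsilon=\gamma(1-\gamma)$, so $D_0/pD_0\cong\mathbb{F}_p[\epsilon]\times\mathbb{F}_p[\epsilon]$ has two components and $\widehat{D_0}\cong\mathbb{Z}_p\langle x\rangle\times\mathbb{Z}_p\langle x-\nicefrac{1}{p}\rangle\neq B$: the Weierstrass domain $\{|px|\leqslant 1,\ |x-px^2|\leqslant 1\}$ is the disjoint union of the two disks $\{|x|\leqslant 1\}$ and $\{|x-\nicefrac{1}{p}|\leqslant 1\}$ (this is exactly the ring of Example~\ref{eg:two-disks-one-A1}), strictly larger than $\Spa(C)$. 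Your Beauville--Laszlo step then computes $A\times_{C\times C''}(B\times B'')=D_0\subsetneq D$ rather than $D$.

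The missing idea is precisely property (3) in the paper's \textbf{Step 1} and the element $t$ manufactured to achieve it. After choosing generators $u_\alpha$ as you do, one forms the affinoid ring $C'$ of the Weierstrass domain $\{|u_\alpha|\leqslant 1\}$, observes that $\Spa(C)\hookrightarrow\Spa(C')$ is open \emph{and} closed so that $C'=C\times C''$, and then uses the density of $A$ in $C'$ (not merely in $C$) to find $t_0\in A$ approximating the element $(0,\nicefrac{1}{\pi})$; such a $t_0$ is power-bounded on $\Spa(C)$ but has $|t_0|=|\nicefrac{1}{\pi}|>1$ at every point of $\Spa(C'')$, so adjoining a lift $t\in D$ of $t_0$ to the generating set kills the spurious component and forces the Weierstrass domain to equal $\Spa(C)$. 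Without this extra generator there is no way to ``rule out spurious components,'' as the example above shows. Two smaller remarks: your treatment of the $\pi$-torsion (``adjoin finitely many further elements accounting for it'') is too vague to support the completion argument --- the paper instead isolates the torsion by a separate fiber-product decomposition $B\cong B'\times_{B'''}B''$ in \textbf{Step 3} and first proves generation assuming $B$ is $\pi$-torsionfree; and even granting $\widehat{D_0}=B$, note that $\pi$ may be a zerodivisor in $D_0$, so the gluing statement you need is Proposition~\ref{prop:coh-gluing-affine-case} (or \stacks[Theorem]{0BP2} with the glueability hypothesis checked via Lemma~\ref{lem:completion-properties}) applied to the module $D_0$ over the Noetherian ring $D_0$, rather than the naive Beauville--Laszlo theorem.
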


\begin{proof} 
We break the proof into four steps.
\medskip

\paragraph{Step 1} We shall find elements $x_1, \ldots, x_n$ in $D$ satisfying the following properties:
\begin{enumerate}
    \item their images in $A$ generate it as an $R\ip$-algebra,
    \item their images in $B$ topologically generate it as an $\wh{R}$-algebra,
    \item the open subset $\Spa(C)$ of $\Spec(A)^{\rm an}$ is cut out by the inequalities $|x_i|\leqslant 1$ ($i=1, \ldots, n$),
    \item the $\pi$-torsion of $B$ (which is the kernel of $B\to C$) is generated as an ideal by the image of a subset $\{x_1, \ldots, x_r\}$, and $x_1, \ldots, x_r$ map to zero in $A$.
\end{enumerate}

Let $\{y'_i\}$ be any finite set of generators for $A$ over $R[\nicefrac{1}{\pi}]$. Note that $\{\pi^N y_i'\}$ is also a set of generators and, as $B$ has open image in $C$, have image in $C$ lying in the image of $B$ for $N\gg 0$. Choose such an $N$ and $\{y_i\}$ be a subset of $D$ mapping to $\{\pi^N y'_i\}$ in $A$. Similarly, let $\{\hat z_j\}$ be a finite set of topological generators for $B$ over $\wh{R}$. Then, for any other elements $\wh{z}'_j$ the set $\{\wh{z}_j+\pi\wh{z}'_j\}$ topologically generates $B$, as $j^*$ has dense image, must also have image in $C$ lying in the image of $A$. Choose such $\{\wh{z}'_j\}$ and let $\{z_j\}$ be a subset of $D$ mapping to $\{\wh{z}_j+\pi\wh{z}'_j\}$. Write $\{u_1,\ldots,u_m\}= \{y_i\}\cup\{z_j\}$.

Observe that by construction the map $R[\nicefrac{1}{\pi}][U_1,\ldots,U_m]\to A$ with $U_\alpha\mapsto u_\alpha$ is surjective. Let us write $I$ for its kernel. Consider the natural map $C'\to C$ where 
\begin{equation}\label{eq:Cprime-to-C} 
    \begin{aligned} C' &= \wh{R}[\nicefrac{1}{\pi}]\langle U_1,\ldots,U_m\rangle/I\wh{R}[\nicefrac{1}{\pi}]\langle U_1,\ldots,U_m\rangle\\ &=A\otimes_{R[\nicefrac{1}{\pi}][U_1,\ldots,U_m]} \wh{R}[\nicefrac{1}{\pi}]\langle U_1,\ldots,U_m\rangle,\end{aligned}
\end{equation}
which is surjective by construction. The space $\Spa(C')$ is a Weierstrass affinoid subdomain of $\Spec(A)^{\rm an}$ defined by $|u_\alpha|_\pi\leqslant 1$. Indeed, $\Spa(C')$ is the pullback of the Weierstrass domain $\Spa(\wh{R}[\nicefrac{1}{\pi}]\langle U_1,\ldots,U_m\rangle)\to \mathbb{A}^{n,\mr{an}}_{\wh{R}[\nicefrac{1}{\pi}]}$ along the Zariski closed embedding $\Spec(A)^\mr{an}\hookrightarrow \mathbb{A}^{n,\mr{an}}_{\wh{R}[\nicefrac{1}{\pi}]}$.

As $\Spa(C)\to \Spec(A)^\mr{an}$ is also an open embedding, we deduce that the map $\Spa(C)\to \Spa(C')$ induced by \eqref{eq:Cprime-to-C} is an open immersion.  But since this morphism is also a closed immersion, we must have a decomposition $C' = C\times C''$ where $C'\to C$ is the first projection. Consider the element $t' = (0, 1/\pi)\in C\times C''=C'$. Since $A\to C'$ has dense image by construction, we may find an element $t_0$ in $A$ whose image is close enough to $t'$ so that $t_0$ is power bounded on $\Spa(C)$ and $t_0$ is not power bounded on $\Spa(C'')$. Observe that, in particular, $j^\ast(t_0)$ lies in the image of $B\to C$, and so we may choose some $t$ in $D$ mapping to it.

Let $\{w_1,\ldots,w_k\}$ be generators of the $\pi$-torsion $B[\pi^\infty]$ which is finitely generated as $R$ is of Type (N). Set $\{x_1,\ldots,x_n\}=\{u_1,\ldots,u_m,t,w_1,\ldots,w_k\}$.

\medskip

\paragraph{Step 2} We shall prove that $\{x_1, \ldots, x_n\}$ in $D$ as in \textbf{Step~1} generate $D$ as an $R$-algebra, assuming that $B$ is $\pi$-torsionfree. Let $x$ be shorthand for $\{x_1,\ldots,x_n\}$ and consider the commutative diagram

\[ 
    \begin{tikzcd}[column sep={1.5cm,between origins},row sep={1.5cm,between origins}]
        & R[x] \arrow[drr] \arrow[ddl] \arrow[rrrr,"\delta"] & & & & D \arrow[drr]\arrow[ddl]\\
        & & & R\langle x\rangle \arrow[rrrr,"\beta",crossing over,two heads] & & & & B\arrow[ddl] \\
        R\ip[x]\arrow[drr]\arrow[rrrr,"\alpha",two heads] & & & & A\arrow[drr] \\
        & & R\ip\langle x\rangle\arrow[rrrr,"\gamma",two heads] \arrow[from=uur,crossing over] & & & & C
    \end{tikzcd}
\]
Our goal is to show that the top arrow $\delta$ is surjective. Fix $d$ in $D$ and denote by $a,b,c$ its images in $A,B,C$, and let $U,V,W$ be their preimages in $R\ip[x]$, $R\langle x\rangle$, $R\ip\langle x\rangle$. Since the left square is cartesian as well, we need to show that the images of $U$ and $V$ in $W$ intersect. To this end, it is enough to show two assertions:
\begin{enumerate}[(i)]
    \item The image of $U$ in $W$ is dense.
    \item The image of $V$ in $W$ is open (and nonempty).
\end{enumerate}
Let us denote by $I$, $J$, $K$ the kernels of $\alpha$, $\beta$, $\gamma$. Thus
\[ 
    U = \tilde\alpha + I, \qquad V = \tilde\beta + J, \qquad W = \tilde\gamma + K
\]
for arbitrary lifts $\tilde\alpha$, $\tilde\beta$, $\tilde\gamma$ of $a,b,c$. In particular, showing (i) and (ii) is equivalent to showing the density of the image of $I$ in $K$ and the openness of the image of $J$ in $K$. 

For the first assertion, we note first that condition (3) means that 
\[ 
    C \simeq A\otimes_{R\ip[x]} R\ip\langle x\rangle.
\]
This is equivalent to saying that $K = I\cdot R\ip\langle x\rangle$. Thus the image of $I$ is dense in $K$. For the second assertion, the assumed injectivity of $B\to C$ implies that $J = K\cap R\langle x\rangle$. Since $R\langle x\rangle$ is open in $R\ip\langle x\rangle$, $J$ is open in $K$.

\medskip

\paragraph{Step 3} 
We now show that $D$ is a finitely generated $R$-algebra in general. Since $B$ is Noetherian, there exists an $N\geqslant 0$ such that $B[\pi^\infty] = B[\pi^N]$. In this case, we have $B \simeq B'\times_{B'''} B''$, where
\begin{equation*}
B'=B/B[\pi^\infty] = {\rm im}(B\to C), \qquad B''=B/\pi^N B, \qquad B''' = B'\otimes_B B'' = B'/\pi^NB'.
\end{equation*}
Therefore, setting $D' = A\times_C B'$, we have
\[ 
    D\isomto D'\times_{B'''} B''.
\]
Now $D'$, $B'$, and $B'''$ are finitely generated $R$-algebras. Moreover, the maps $D\to D'$ and $D\to B''$ are surjective. Therefore by \stacks[Lemma]{00IT}, the ring $D$ is finitely generated over $R$.

\medskip

\paragraph{Step 4} It finally suffices to verify that $A=D[\nicefrac{1}{\pi}]$ and $\wh{D}=B$. By the arguments in \textbf{Step 1}, both $D[\nicefrac{1}{\pi}]\to A$ and $\wh{D}\to B$ are surjective, thus it suffices to verify both maps are injective. Suppose that $d=(a,b)$ belongs to ${\rm ker}(D[\nicefrac{1}{\pi}]\to A)$. Then, it is of the form $d=(0,b)$ and so $b$ is itself $\pi$-torsion. Thus, $d$ is actually zero in $D[\nicefrac{1}{\pi}]$ as desired. To see that $\wh{D}\to\wh{B}$ injective it suffices to show that $D/\pi^n\to B/\pi^n$ is injective for all $n$. But, if $d=(a,b)$ in $D$ maps to zero in $B/\pi^n$ we can write $b=\pi^nb'$ and then $d=\pi^n d'$ where $d'=(\nicefrac{a}{\pi^n},b)$ from where the injectivity of $D/\pi^n\to B/\pi^n$ follows.
\end{proof}

\bibliographystyle{plain} 
\bibliography{bib}

\begin{thebibliography}{10}

\bibitem{ALYSpecialization}
Piotr Achinger, Marcin Lara, and Alex Youcis.
\newblock Specialization for the pro-\'{e}tale fundamental group.
\newblock {\em Compos. Math.}, 158(8):1713--1745, 2022.

\bibitem{ArtinII}
M.~Artin.
\newblock Algebraization of formal moduli. {II}. {E}xistence of modifications.
\newblock {\em Ann. of Math. (2)}, 91:88--135, 1970.

\bibitem{SGA4-1}
M.~Artin, A.~Grothendieck, and J.-L. Verdier.
\newblock {\em Th\'{e}orie des topos et cohomologie \'{e}tale des sch\'{e}mas.
  {T}ome 1: {T}h\'{e}orie des topos}.
\newblock Lecture Notes in Mathematics, Vol. 269. Springer-Verlag, Berlin-New
  York, 1972.
\newblock S\'{e}minaire de G\'{e}om\'{e}trie Alg\'{e}brique du Bois-Marie
  1963--1964 (SGA 4), Dirig\'{e} par M. Artin, A. Grothendieck, et J. L.
  Verdier. Avec la collaboration de N. Bourbaki, P. Deligne et B. Saint-Donat.

\bibitem{BeauvilleLaszlo}
Arnaud Beauville and Yves Laszlo.
\newblock Un lemme de descente.
\newblock {\em C. R. Acad. Sci. Paris S\'{e}r. I Math.}, 320(3):335--340, 1995.

\bibitem{BBT}
Oren Ben-Bassat and Michael Temkin.
\newblock Berkovich spaces and tubular descent.
\newblock {\em Adv. Math.}, 234:217--238, 2013.

\bibitem{BGR}
S.~Bosch, U.~G\"{u}ntzer, and R.~Remmert.
\newblock {\em Non-{A}rchimedean analysis}, volume 261 of {\em Grundlehren der
  mathematischen Wissenschaften [Fundamental Principles of Mathematical
  Sciences]}.
\newblock Springer-Verlag, Berlin, 1984.
\newblock A systematic approach to rigid analytic geometry.

\bibitem{BoschLectures}
Siegfried Bosch.
\newblock {\em Lectures on formal and rigid geometry}, volume 2105 of {\em
  Lecture Notes in Mathematics}.
\newblock Springer, Cham, 2014.

\bibitem{BoschGortz}
Siegfried Bosch and Ulrich G\"{o}rtz.
\newblock Coherent modules and their descent on relative rigid spaces.
\newblock {\em J. Reine Angew. Math.}, 495:119--134, 1998.

\bibitem{BLNeron}
Siegfried Bosch and Werner L\"{u}tkebohmert.
\newblock N\'{e}ron models from the rigid analytic viewpoint.
\newblock {\em J. Reine Angew. Math.}, 364:69--84, 1986.

\bibitem{CLO}
Brian Conrad, Max Lieblich, and Martin Olsson.
\newblock Nagata compactification for algebraic spaces.
\newblock {\em J. Inst. Math. Jussieu}, 11(4):747--814, 2012.

\bibitem{ConradTemkinII}
Brian Conrad and Michael Temkin.
\newblock Non-{A}rchimedean analytification of algebraic spaces.
\newblock {\em J. Algebraic Geom.}, 18(4):731--788, 2009.

\bibitem{ConradTemkin}
Brian Conrad and Michael Temkin.
\newblock Descent for nonarchimedean analytic spaces.
\newblock {\em Tunis. J. Math.}, 3(4):689--748, 2021.

\bibitem{DeligneRapoport}
P.~Deligne and M.~Rapoport.
\newblock Les sch\'{e}mas de modules de courbes elliptiques.
\newblock In {\em Modular functions of one variable, {II} ({P}roc. {I}nternat.
  {S}ummer {S}chool, {U}niv. {A}ntwerp, {A}ntwerp, 1972)}, Lecture Notes in
  Math., Vol. 349, pages 143--316. Springer, Berlin-New York, 1973.

\bibitem{FujiwaraKato}
Kazuhiro Fujiwara and Fumiharu Kato.
\newblock {\em Foundations of rigid geometry. {I}}.
\newblock EMS Monographs in Mathematics. European Mathematical Society (EMS),
  Z\"{u}rich, 2018.

\bibitem{HuberGen}
Roland Huber.
\newblock A generalization of formal schemes and rigid analytic varieties.
\newblock {\em Math. Z.}, 217(4):513--551, 1994.

\bibitem{HuberEC}
Roland Huber.
\newblock {\em \'{E}tale cohomology of rigid analytic varieties and adic
  spaces}.
\newblock Aspects of Mathematics, E30. Friedr. Vieweg \& Sohn, Braunschweig,
  1996.

\bibitem{IKY2}
Naoki Imai, Hiroki Kato, and Alex Youcis.
\newblock The prismatic realization functor for {S}himura varieties of abelian
  type, 2024.

\bibitem{IKY1}
Naoki Imai, Hiroki Kato, and Alex Youcis.
\newblock A {T}annakian framework for prismatic {$F$}-crystals, 2024.

\bibitem{KalethaPrasad}
Tasho Kaletha and Gopal Prasad.
\newblock {\em Bruhat-{T}its theory---a new approach}, volume~44 of {\em New
  Mathematical Monographs}.
\newblock Cambridge University Press, Cambridge, 2023.

\bibitem{KedlayaAWS}
Kiran~S. Kedlaya.
\newblock Sheaves, shtukas, and stacks.
\newblock In {\em Perfectoid Spaces}, volume 242 of {\em Mathematical Surveys
  and Monographs}, pages 45--191. American Mathematical Society, Providence,
  RI, 2019.

\bibitem{LutkebohmertRET}
W.~L\"{u}tkebohmert.
\newblock Riemann's existence problem for a {$p$}-adic field.
\newblock {\em Invent. Math.}, 111(2):309--330, 1993.

\bibitem{Matsumoto}
Yuya Matsumoto.
\newblock Good reduction criterion for {K}3 surfaces.
\newblock {\em Math. Z.}, 279(1-2):241--266, 2015.

\bibitem{PappasRapoportI}
Georgios Pappas and Michael Rapoport.
\newblock {$p$}-adic shtukas and the theory of global and local {S}himura
  varieties.
\newblock {\em Camb. J. Math.}, 12(1):1--164, 2024.

\bibitem{Poonen}
Bjorn Poonen.
\newblock {\em Rational points on varieties}, volume 186 of {\em Graduate
  Studies in Mathematics}.
\newblock American Mathematical Society, Providence, RI, 2017.

\bibitem{RaynaudSpecialisation}
M.~Raynaud.
\newblock Sp\'{e}cialisation du foncteur de {P}icard.
\newblock {\em Inst. Hautes \'{E}tudes Sci. Publ. Math.}, 38:27--76, 1970.

\bibitem{ScholzeBerkeley}
Peter Scholze and Jared Weinstein.
\newblock {\em Berkeley lectures on {$p$}-adic geometry}, volume 207 of {\em
  Annals of Mathematics Studies}.
\newblock Princeton University Press, Princeton, NJ, 2020.

\bibitem{Silverman}
Joseph~H. Silverman.
\newblock {\em The arithmetic of elliptic curves}, volume 106 of {\em Graduate
  Texts in Mathematics}.
\newblock Springer, Dordrecht, second edition, 2009.

\bibitem{StacksProject}
The {Stacks Project Authors}.
\newblock \textit{Stacks Project}.
\newblock \url{http://stacks.math.columbia.edu}, 2021.

\bibitem{Warner}
Evan Warner.
\newblock Adic moduli spaces.
\newblock PhD thesis available at \url{https://purl.stanford.edu/vp223xm9859},
  2017.

\bibitem{ZavyalovSheafiness}
Bogdan Zavyalov.
\newblock Sheafiness of strongy rigid-noetherian {H}uber pairs.
\newblock {\em Math. Ann.}, 386(3-4):1305--1324, 2023.

\end{thebibliography}

\end{document}